\numberwithin{equation}{section}
\newtheorem{remark}{Remark}[section]
\newtheorem{theorem}[remark]{Theorem}
\newtheorem{corollary}[remark]{Corollary}
\newtheorem{lemma}[remark]{Lemma}
\newtheorem{proposition}[remark]{Proposition}
\newtheorem{definition}[remark]{Definition}
\newtheorem{example}[remark]{Example}
\newcommand{\Addresses}{{
    \bigskip
    \footnotesize
    
    Monika D\"orfler, \textsc{Department of Mathematics, University of Vienna, 1090 Vienna, Austria}\par\nopagebreak
    \textit{E-mail address}: \texttt{monika.doerfler@univie.ac.at}
    

 Lukas K\"ohldorfer, \textsc{Acoustics Research Institute, Austrian Academy of Sciences, 1010 Vienna, Austria}\par\nopagebreak
    \textit{E-mail address}: \texttt{lukas.koehldorfer@oeaw.ac.at}
    
    
    Franz Luef, \textsc{Department of Mathematical Sciences, Norwegian University of Science and Technology, 7034
    Trondheim, Norway}\par\nopagebreak
    \textit{E-mail address}: \texttt{franz.luef@ntnu.no}
    
    \medskip
    
    Henry McNulty, \textsc{Cognite AS, 1366 Lysaker, Norway}\par\nopagebreak
    \textsc{Department of Mathematical Sciences, Norwegian University of Science and Technology, 7491
    Trondheim, Norway}\par\nopagebreak
    \textit{E-mail address}: \texttt{henry.mcnulty@cognite.com}
  %
 %
%
}}
\newcommand{\Hil}[0]{{\mathcal{H}}}
\newcommand{\B}[0]{{\mathcal{B}}}
\newcommand{\A}[0]{{\mathcal{A}}}
\newcommand{\J}[0]{{\mathcal{J}}}
\newcommand{\HS}[0]{{\mathcal{HS}}}
\newcommand{\Co}[0]{\mathcal{C}o}
\title{Approximation properties of operator coorbit spaces and sparsity classes}
\author{Monika Dörfler
\and Lukas Köhldorfer
\and Franz Luef
\and Henry McNulty 
  }
\begin{document}

\maketitle

\begin{abstract}
Extensions of coorbit spaces for functions to operators have been introduced by two different groups in \cite{doelumcskr24} and \cite{köbaLOC25}, where one is based on the coorbit theory of Feichtinger-Gr\"ochening while the other is based on the theory of localized frames. We show that for certain Gabor g-frames the co-orbit spaces in \cite{köbaLOC25} conincide with the ones in \cite{doelumcskr24} and we refer to this class of operators as operator coorbit spaces. Based on the description of operator coorbit spaces in terms of Gabor g-frames we provide operator dictionaries for these spaces that allow us to define sparsity classes in this setting. We establish that these sparsity classes also coincide with the operator coorbit spaces, which holds, in particular for all Feichtinger operators, a nice class of mixed states. Numerical examples confirm the expected approximation quality by few terms for appropriately chosen operators. 
\end{abstract}

\section{Introduction}


In applied harmonic analysis compressed sensing provides us with fast and reliable algorithms based on sparse representations of signals or images. As an example, wavelet decompositions provide sparse representations of images. In the search for dictionaries which enable sparse representations for a particular class of objects, it has been noticed that \emph{localized} frames can lead to sparsity in representations of functions in certain Banach spaces such as modulation spaces or Besov spaces, or more general coorbit spaces of functions associated to projective square-integrable, integrable unitary representations of a locally compact group $G$ on a Hilbert space $\Hil$, cf.~\cite{gr04-1,forngroech05,raul11}.

Many problems in quantum technology deal with quantum systems that have a huge number of quantum states, e.g. ground state problems, see,~e.g.,~ \cite{KuengPreskil2022}, where the authors propose the use of machine learning techniques to address this crucial issue. One of our main contributions to the theory of quantum states is to define sparsity classes and to construct {\it sparse} decompositions of mixed-state operators, in a sense made more precise in Section \ref{Approximation theoretic results}. These results are operator analogs of the ones on sparse decomposition for functions. Like in the case of functions, we introduce sparsity classes for co-orbit spaces of operators, defined and studied in \cite{doelumcskr24}. 

From a mathematical point of view, quantum states are positive trace class operators with unit trace, aka density operators, and in many situations we are dealing with a finite-rank operator, which is the sum of a large number of rank-one operators. Another example of an operator of this form is the data operator introduced in~\cite{Dorf21}, which may be interpreted as the sample covariance operator of a random variable taking values in a Hilbert space $\mathcal{H}$,~\cite{taovu12}.

Given the importance of density operators in different contexts as described above, it is one of the goals of this work to develop sparse decompositions for density operators using operator-valued frames. In analogy to the function setting, these frames are generated by {\it operator translations} of a ``nice" trace class operator, the operator analysis window $\Phi$,  and where the analysis mapping is sampling the operator short-time Fourier transform,~\cite{doelumcskr24}, of the density operator with respect to the analysis trace class operator $\Phi$.

We show that results on sparsity classes associated with operator-valued frame yield as a consequence sparse representations of density operators in terms of the first few largest frame coefficients of the operator-valued frame decomposition of the density operator as in the case of best $m$-term approximation of signals. 
The results are based on the theory of localized operator-valued frames \cite{köbaLOC25}, which contain Gabor g-frames \cite{skrett21} as a special case, and a class of operators associated with the operator STFT in \cite{doelumcskr24}, which one might call operator coorbit spaces. Gabor g-frames have also been independently introduced by Han, Li and Wang in \cite{HanLiWang-OpValGaborFrames} under the name of operator-valued Gabor frames, motivated by the work of Kaftal, Larson and Zhang on operator-valued frames \cite{KaftalLarsonZhang-OpValuedFrames}. The sparse representation of density operators is in terms of tight operator Gabor g-frame decomposition of a suitable Gabor g-frame operator window. We conclude this introductory section by two important remarks. First, the existence of sparse operator representation presented in this article depends intrinsically on the localization of the representation system. This is in line with earlier work on sparse signal representation, cf.~\cite{grib-nielsen2}. Second, we  point out that  the proposed sparse representations are related to "pretty good" measurements in quantum information theory, cf.~\cite{Hausladen1996,HausladenWootters1994,Gilyen2022,KotowskiOszmaniec2025}.

\section{Preliminaries}


Throughout this manuscript, $\Hil$ will always be a separable Hilbert space. By $\mathbb{N}$ we denote the set of positive intergers and we abbreviate $\mathbb{N}_0 = \mathbb{N} \cup \lbrace 0 \rbrace$.

\subsection{Weight functions}\label{Weight functions}

A \emph{weight function} on $\mathbb{R}^{d}$, or simply \emph{weight}, is a continuous and positive function $\nu :\mathbb{R}^{d} \longrightarrow (0,\infty )$.
The standard weight considered in this work 
is the polynomial weight $\nu_s$ ($s\geq 0$) given by 
$$\nu_s (z) = (1+\vert z \vert)^s \qquad (z\in \mathbb{R}^d).$$
A weight $m$ is called \emph{$\nu$-moderate} if there exists some constant $C>0$ such that 
$$m(z+z') \leq Cm(z)\nu(z') \qquad (\forall z, z' \in \mathbb{R}^{d}).$$

For more background on weight functions and their relevance in time-frequency analysis, see \cite{gr06weightsinTFA}.



\subsection{Bochner spaces}

Let $B$ be a Banach space, $1\leq p < \infty$ and $\nu$ be a weight on $\mathbb{R}^d$. Then the (weighted) \emph{Lebesgue-Bochner space} \cite[Chapter 1]{HyNeVeWe16} $L^p_{\nu}(\mathbb{R}^d;B)$ denotes the space of all (equivalence classes of) strongly Lebesgue-measurable functions $f:\mathbb{R}^d \longrightarrow B$ for which 
$$\int_{\mathbb{R}^d} \Vert f(z)\Vert_B^p \nu(z)^p\, dz $$
is finite. Equipped with the norm 
$$\Vert f \Vert_{L^p_{\nu}(\mathbb{R}^d;B)}:= \left( \int_{\mathbb{R}^d} \Vert f(z)\Vert_B^p \nu(z)^p \, dz \right)^{\frac{1}{p}}$$
$L^p_{\nu}(\mathbb{R}^d;B)$ is a Banach space. The Banach space $L^{\infty}_{\nu}(\mathbb{R}^d;B)$ is defined via the usual modifications. In case $\nu \equiv 1$ is the trivial weight on $\mathbb{R}^d$ we abbreviate $L^p(\mathbb{R}^d;B) := L^p_1(\mathbb{R}^d;B)$ ($1\leq p \leq \infty$).

Next, we consider the discrete variants of the Lebesgue-Bochner spaces introduced above. Let again $B$ be a Banach space, $\nu$ be a weight on $\mathbb{R}^d$ and let $\Lambda$ be a discrete subset of $\mathbb{R}^d$. Then for $p\in (0,\infty]$ the (weighted) \emph{Bochner sequence space} $\ell^p_{\nu}(\Lambda; B)$ \cite[Chapter 1]{HyNeVeWe16} is defined by 
$$\ell_{\nu}^p(\Lambda; B) := \left\lbrace (h_k)_{k\in \Lambda}: h_k \in B \, (\forall k\in \Lambda), (\Vert h_k \Vert_B \cdot \nu(k))_{k\in \Lambda} \in \ell^p(\Lambda) \right\rbrace .$$
In case $\nu(k) = 1$ for all $k\in \Lambda$, we write $\ell_{\nu}^p(\Lambda; B) = \ell^p(\Lambda; B)$. By \cite[Chapter 1]{HyNeVeWe16}, $\ell_{\nu}^p(\Lambda; B)$ equipped with the (quasi-)norm 
$$\Vert (h_k)_{k\in \Lambda} \Vert_{\ell_{\nu}^p(\Lambda; B)} := \Vert (\Vert h_k \Vert_B \cdot \nu(k))_{k\in \Lambda}\Vert_{\ell^p(\Lambda)},$$
is a Banach space for every $1\leq p \leq \infty$ and a quasi-Banach space for $0<p<1$. By $c_{00}(\Lambda ;B)$ we denote the space of all finitely supported $B$-valued sequences indexed by $\Lambda$. It is straightforward to see that $(c_{00}(\Lambda;B), \Vert \, . \, \Vert_{\ell_{\nu}^p(\Lambda; B)})$ is a dense subspace of $(\ell_{\nu}^p(\Lambda;B), \Vert \, . \, \Vert_{\ell_{\nu}^p(\Lambda; B)})$ for each $p\in [1,\infty)$.

\subsection{Some background on g-frames}\label{Some background on g-frames and localization}
Frames, introduced by Duffin and Schaeffer in 1952 \cite{duffschaef1}, are countable families of vectors in a separable Hilbert space, which allow a redundant and stable reconstruction of any vector in that space from linear rank-one measurements. Here we consider a generalization of frames, originally called \emph{generalized frames} (\emph{g-frames}) \cite{sun06}, that is, countable families of operators which allow a linear and stable reconstruction of any vector from higher-rank measurements. 

A countable family $(U_{\lambda})_{\lambda \in \Lambda}$ of bounded operators $U_{\lambda} \in \B(\Hil)$ on some separable Hilbert space $\Hil$ is called an \emph{operator-valued frame} \cite{sun06} for $\Hil$, if there exist positive constants $0<A\leq B<\infty$, such that 
\begin{equation}\label{gframedef}
    A\Vert f \Vert^2 \leq \sum_{\lambda \in \Lambda} \Vert U_{\lambda} f \Vert^2 \leq B\Vert f \Vert^2 \qquad(\forall f\in \Hil).
\end{equation}
For brevity, we will stick to the name \emph{g-frame} from now on and call $A$ and $B$ the \emph{lower} and \emph{upper g-frame bound} respectively. We call $(U_{\lambda})_{\lambda \in \Lambda}$ a \emph{g-Bessel sequence} with \emph{Bessel bound} $B$, whenever the upper (but not necessarily the lower) inequality in (\ref{gframedef}) holds true for all $f\in \Hil$. For any g-Bessel sequence $\mathcal{U} =(U_{\lambda})_{\lambda \in \Lambda}$ the following operators are well-defined and bounded \cite{sun06}:
\begin{itemize}
    \item The \emph{synthesis operator} $D_{\mathcal{U}} : \ell^2(\Lambda;\Hil) \longrightarrow \mathcal{H}$, defined by $$D_{\mathcal{U}} (g_{\lambda})_{\lambda \in \Lambda} = \sum_{\lambda \in \Lambda} U^*_{\lambda} g_{\lambda} ,$$
    \item The \emph{analysis operator} 
    $C_{\mathcal{U}} : \mathcal{H} \longrightarrow \ell^2(\Lambda;\Hil)$, defined by $$C_{\mathcal{U}} f = ( U_{\lambda} f )_{\lambda \in \Lambda} ,$$
    \item The \emph{g-frame operator}
    $S_{\mathcal{U}} := D_{\mathcal{U}} C_{\mathcal{U}}: \mathcal{H} \longrightarrow \mathcal{H}$, given by $$S_{\mathcal{U}} f = \sum_{\lambda \in \Lambda} U^*_{\lambda} U_{\lambda} f .$$
    \item The \emph{g-Gram matrix} $G_{\mathcal{U}} := C_{\mathcal{U}} D_{\mathcal{U}} : \ell^2(\Lambda;\Hil) \longrightarrow \ell^2(\Lambda;\Hil)$.
\end{itemize}
In this case, $\Vert D_{\mathcal{U}} \Vert = \Vert C_{\mathcal{U}} \Vert \leq \sqrt{B}$ and $D_{\mathcal{U}}$ and $C_{\mathcal{U}}$ are adjoint to one another. Consequently, both $S_{\mathcal{U}}$ and $G_{\mathcal{U}}$ are self-adjoint and bounded by $B$. If $\mathcal{U}$ is a g-frame with frame bounds $A\leq B$, then $S_{\mathcal{U}}$ is additionally a positive operator, whose spectrum is contained in $[A,B]$, and thus invertible on $\Hil$. Hence $S_{\mathcal{U}}$ may be composed with its inverse (and vice versa), which yields the \emph{g-frame reconstruction} formulae   
\begin{equation}\label{gframerec}
f = \sum_{\lambda \in \Lambda} U^*_{\lambda} U_{\lambda} S_{\mathcal{U}}^{-1} f = \sum_{\lambda \in \Lambda} S_{\mathcal{U}}^{-1} U^*_{\lambda} U_{\lambda} f \qquad (\forall f\in \mathcal{H}) .
\end{equation}
The family $\widetilde{\mathcal{U}} = (\widetilde{U}_{\lambda})_{\lambda \in \Lambda}$, defined by $\widetilde{U}_{\lambda} = U_{\lambda} S_{\mathcal{U}}^{-1}$ ($\lambda \in \Lambda$), which appears in the reconstruction process (\ref{gframerec}), is again a g-frame and called the \emph{canonical dual g-frame} of $\mathcal{U}$. 
More generally \cite{kutpatphi17}, if $\mathcal{W} = (W_{\lambda})_{\lambda \in \Lambda}$ is a g-Bessel sequence so that 
\begin{equation}\label{dualgframe}
f = \sum_{\lambda \in \Lambda} U^*_{\lambda} W_{\lambda} f = \sum_{\lambda \in \Lambda} W^*_{\lambda} U_{\lambda} f \qquad (\forall f\in \mathcal{H}), 
\end{equation}
or, equivalently, 
\begin{equation}\label{dualgframeoperatornotation}
\mathcal{I}_{\B(\Hil)} = D_{\mathcal{U}} C_{\mathcal{W}} = D_\mathcal{W} C_{\mathcal{U}} , 
\end{equation}
then $(W_{\lambda})_{\lambda \in \Lambda}$ is already g-frame and called a \emph{dual g-frame} of $(U_{\lambda})_{\lambda \in \Lambda}$. 

\subsection{Polynomially localized g-frames}\label{Polynomially localized g-frames} 

Besides the above-mentioned possibility of perfect reconstruction (\ref{gframerec}), some g-frames admit other useful properties which are not captured by the defining g-frame inequalities (\ref{gframedef}) alone. 

A g-frame $\mathcal{U} = (U_{\lambda})_{\lambda \in \Lambda}$ for $\Hil$ is called \emph{polynomially $s$-localized} ($s>0$), or simply \emph{$s$-localized}, if there exists a constant $C>0$, such that 
\begin{equation}\label{slocalized}
\Vert U_{\lambda} U_{\mu}^* \Vert_{\B(\Hil)} \leq C (1+\vert \lambda - \mu \vert)^{-s} \qquad (\forall \lambda , \mu \in \Lambda).    
\end{equation}
Furthermore, if $\mathcal{W} = (W_{\lambda})_{\lambda \in \Lambda}$ is another g-frame for $\Hil$, then we say that $\mathcal{U}$ and $\mathcal{W}$ are \emph{mutually (polynomially) $s$-localized} ($s>0$), if there exists a constant $C>0$, such that 
\begin{equation}\label{mutually}
\Vert U_{\lambda} W_l^* \Vert_{\B(\Hil)} \leq C (1+\vert \lambda - \mu \vert)^{-s} \qquad (\forall \lambda , \mu \in \Lambda).    
\end{equation}

\begin{remark}
Relations (\ref{slocalized}) and (\ref{mutually}) say that both the g-Gram matrix $G_{\mathcal{U}} = [U_{\lambda} U_{\mu}^*]_{\lambda , \mu\in \Lambda}$ and the mixed g-Gram matrix $G_{\mathcal{U},\mathcal{W}} = [U_{\lambda} W_l^*]_{\lambda , \mu\in \Lambda}$ belong to the \emph{Jaffard class} $\J_s$ of $\B(\Hil)$-valued matrices with polynomial off-diagonal decay of order $s$. In particular, if the index set $\Lambda$ is assumed to be a \emph{relatively separated} set in $\mathbb{R}^d$, that is 
$$\sup_{x\in \mathbb{R}^d} \vert \Lambda \cap (x+[0,1]^d) \vert < \infty,$$
and if $s>d$, then $\J_s$ is a so-called \emph{spectral algebra} \cite{koeba25}, i.e. a Banach algebra of $\B(\Hil)$-valued matrices with manifold convenient properties, see also \cite{köbasampta25}.           
\end{remark}

Polynomially localized g-frames and their rich properties have been studied in \cite{köbaLOC25}. In the following, we provide the definitions and results relevant to this work. For the remainder of this section, we always assume the following. 

\

\noindent \textbf{Assumption:} $\Lambda \subset \mathbb{R}^d$ is a relatively separated (index) set. 

\


Later we will only work with a full-rank lattice $\Lambda$, which satisfies the latter (necessary) technical condition anyway.  

First, we note the remarkable fact, that $s$-localization is preserved under canonical duality.

\begin{theorem}\label{duallocalized}\cite{köbaLOC25}
Let $s>d$ and $\mathcal{U} =(U_{\lambda})_{\lambda \in \Lambda}$ be an $s$-localized g-frame. Then the following hold:
\begin{itemize}
    \item[(i)] The canonical dual g-frame $\widetilde{\mathcal{U}} =(\widetilde{U}_{\lambda})_{\lambda \in \Lambda}$ is $s$-localized.
    \item[(ii)] $\mathcal{U}$ and $\widetilde{\mathcal{U}}$ are mutually $s$-localized.
\end{itemize}
\end{theorem}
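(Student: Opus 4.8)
The plan is to reduce both claims to the fact that the Jaffard class $\J_s$ (for $s>d$ and $\Lambda$ relatively separated) is a Banach algebra that is inverse-closed in $\B(\ell^2(\Lambda;\Hil))$, which is recalled in the Remark above. First I would reinterpret $s$-localization of $\mathcal{U}$ as the statement that the g-Gram matrix $G_{\mathcal{U}} = [U_\lambda U_\mu^*]_{\lambda,\mu}$ lies in $\J_s$. The key intermediate object is the operator $P := C_{\mathcal{U}} S_{\mathcal{U}}^{-1} D_{\mathcal{U}}$ on $\ell^2(\Lambda;\Hil)$, which is the orthogonal-type projection onto the range of $C_{\mathcal{U}}$; written out entrywise, its $(\lambda,\mu)$ block is $U_\lambda S_{\mathcal{U}}^{-1} U_\mu^* = \widetilde{U}_\lambda U_\mu^*$, i.e. $P = G_{\mathcal{U},\widetilde{\mathcal{U}}}$, the mixed g-Gram matrix of $\mathcal{U}$ and its canonical dual. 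So proving (ii) amounts to showing $P \in \J_s$, and proving (i) amounts to showing $G_{\widetilde{\mathcal{U}}} = [\widetilde U_\lambda \widetilde U_\mu^*]_{\lambda,\mu} \in \J_s$.

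Next I would show $P \in \J_s$. The natural route is to express $P$ purely in terms of $G_{\mathcal{U}}$. Since $S_{\mathcal{U}} = D_{\mathcal{U}} C_{\mathcal{U}}$ and $G_{\mathcal{U}} = C_{\mathcal{U}} D_{\mathcal{U}}$, one has the standard operator identities $C_{\mathcal{U}} S_{\mathcal{U}}^{-1} = G_{\mathcal{U}}^{\dagger} C_{\mathcal{U}}$ in an appropriate sense, or more cleanly: $P$ is the unique projection in $\B(\ell^2(\Lambda;\Hil))$ with range $\operatorname{ran}(C_{\mathcal{U}}) = \operatorname{ran}(G_{\mathcal{U}})$ and kernel $\ker(D_{\mathcal{U}}) = \ker(G_{\mathcal{U}})$; equivalently $P$ is the Moore–Penrose-type spectral projection of the self-adjoint operator $G_{\mathcal{U}}$ onto its strictly positive part. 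Because $G_{\mathcal{U}} \in \J_s$ and $\J_s$ is a spectral (inverse-closed) Banach subalgebra, any function of $G_{\mathcal{U}}$ that is holomorphic on a neighbourhood of $\operatorname{spec}(G_{\mathcal{U}})$ — in particular the one equal to $0$ near $0$ and $1$ on $[A', B']$ where $[A',B']$ is the positive part of the spectrum, which is well-separated from $0$ since $\mathcal{U}$ is a \emph{frame} — again lies in $\J_s$ by the holomorphic functional calculus in Banach algebras. Hence $P = G_{\mathcal{U},\widetilde{\mathcal{U}}} \in \J_s$, which is exactly (ii). (One must double-check that $0$ is either not in the spectrum or is an isolated point, so that the indicator is holomorphic on a neighbourhood of the spectrum; this holds because $G_{\mathcal{U}}$ restricted to $\operatorname{ran}(C_{\mathcal{U}})$ has spectrum in $[A,B]$, and on the complement it is $0$.)

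For (i), once $P \in \J_s$ is in hand, I would write $G_{\widetilde{\mathcal{U}}} = C_{\widetilde{\mathcal{U}}} D_{\widetilde{\mathcal{U}}}$ and relate it to $G_{\mathcal{U}}$ and $P$. Concretely, $\widetilde U_\lambda \widetilde U_\mu^* = U_\lambda S_{\mathcal{U}}^{-1} S_{\mathcal{U}}^{-1} U_\mu^*$; since $S_{\mathcal{U}}^{-1}$ restricted to the relevant range can be written through $G_{\mathcal{U}}$, one gets $G_{\widetilde{\mathcal{U}}}$ as a product/composition of elements of $\J_s$ (e.g. $G_{\widetilde{\mathcal{U}}} = P^* G_{\mathcal{U}}^{\dagger} P$ or $G_{\widetilde{\mathcal{U}}} = G_{\mathcal{U},\widetilde{\mathcal{U}}}^* \,G_{\mathcal{U}}^{\dagger}\, G_{\mathcal{U},\widetilde{\mathcal{U}}}$ with $G_{\mathcal{U}}^{\dagger}$ the pseudoinverse, itself in $\J_s$ by inverse-closedness applied to $G_{\mathcal{U}} + (I-P)$). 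Closure of $\J_s$ under multiplication and adjoints then finishes it. Alternatively, and perhaps more transparently, one can run the same functional-calculus argument directly: $G_{\widetilde{\mathcal{U}}}$ equals $g(G_{\mathcal{U}})$ for the function $g$ that is $1/t$ on the positive part of the spectrum and $0$ near $0$, again holomorphic near $\operatorname{spec}(G_{\mathcal{U}})$, hence in $\J_s$.

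The main obstacle I anticipate is not any single estimate but setting up the functional-calculus / pseudoinverse argument cleanly in the $\B(\Hil)$-valued matrix algebra: one needs that $\J_s$ is genuinely inverse-closed (spectral) as a Banach algebra — which the Remark grants for $s>d$ and relatively separated $\Lambda$, citing \cite{koeba25,köbasampta25} — and one needs to handle the fact that $G_{\mathcal{U}}$ is not invertible on all of $\ell^2(\Lambda;\Hil)$ but only on the range of $C_{\mathcal{U}}$, so the ``inverse'' must be interpreted as a pseudoinverse and one must verify that adding the complementary projection $I-P$ (which is itself in $\J_s$, being $I - G_{\mathcal{U},\widetilde{\mathcal{U}}}$) produces a genuinely invertible element of $\J_s$ whose inverse restricts correctly. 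Once that bookkeeping is done, everything else is closure of $\J_s$ under sums, products, and adjoints.
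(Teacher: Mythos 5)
The paper does not prove this statement itself; it is imported verbatim from \cite{köbaLOC25}, so there is no in-text proof to compare against. Your proposal is, as far as I can tell, a correct reconstruction of the standard argument for localization results of this type, and almost certainly the route taken in the cited reference: identify $G_{\mathcal{U},\widetilde{\mathcal{U}}} = C_{\mathcal{U}}S_{\mathcal{U}}^{-1}D_{\mathcal{U}} = P$ with the orthogonal projection onto $\operatorname{ran}(C_{\mathcal{U}})$ and $G_{\widetilde{\mathcal{U}}} = C_{\mathcal{U}}S_{\mathcal{U}}^{-2}D_{\mathcal{U}} = G_{\mathcal{U}}^{\dagger}$ with the pseudoinverse (both identities follow by direct computation from $D_{\mathcal{U}}C_{\mathcal{U}}=S_{\mathcal{U}}$ and $D_{\mathcal{U}}^{*}=C_{\mathcal{U}}$), note that $\operatorname{spec}(G_{\mathcal{U}})\subset\{0\}\cup[A,B]$ so that both are holomorphic functions of $G_{\mathcal{U}}$, and invoke inverse-closedness of $\J_s$ for $s>d$ and $\Lambda$ relatively separated. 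Two small points deserve explicit care. First, passing from inverse-closedness to closure under the Riesz--Dunford calculus requires checking that the resolvent $z\mapsto (zI-G_{\mathcal{U}})^{-1}$ is continuous (hence integrable) in the $\J_s$-norm on the contour; this follows from the resolvent identity together with local boundedness of the $\J_s$-norm of the resolvent, which inverse-closedness plus the Banach-algebra structure of $\J_s$ provide, but it is not literally contained in the statement ``$\J_s$ is spectral.'' Second, your fallback formula $G_{\mathcal{U}}^{\dagger}=(G_{\mathcal{U}}+(I-P))^{-1}-(I-P)$ presupposes $P\in\J_s$, so it cannot replace the functional-calculus step for $P$ itself, only the one for $G_{\widetilde{\mathcal{U}}}$; with that ordering (first $P$, then $G_{\mathcal{U}}^{\dagger}$) the argument closes.
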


Another crucial property of $s$-localized g-frames is, that the series (\ref{gframerec}) associated with g-frame reconstruction not only converges in the underlying Hilbert space, but also in a whole family of associated (quasi-)Banach spaces, defined below. 


%
%
%
%
\begin{definition}\label{Hpwdef}\cite{köbaLOC25}
Let $s>d+r$ for some $r\geq 0$, let $\nu_r(x) = (1+\vert x \vert)^r$ denote the polynomial weight of order $r$ on $\mathbb{R}^d$, and assume that $m$ is a $\nu_r$-moderate weight. Further, let $\mathcal{U} = (U_{\lambda})_{\lambda \in \Lambda}$ be an $s$-localized g-frame for $\Hil$ with $s$-localized canonical dual g-frame $\widetilde{\mathcal{U}} =(\widetilde{U}_{\lambda})_{\lambda \in \Lambda}$. Then, for each $\frac{d}{s-r} < p < \infty$, the \emph{co-orbit space} $\Co^p_{m}(\mathcal{U})$ is defined as the completion of 
$$\Hil^{00}(\mathcal{U}):= D_{\mathcal{U}}(c_{00}(\Lambda;\Hil))$$
with respect to the (quasi-)norm $\Vert \, . \, \Vert_{\Co^p_{m }(\mathcal{U})}$ on $\Hil^{00}(\mathcal{U})$ given by
$$\Vert f \Vert_{\Co^p_{m }(\mathcal{U})} := \Vert C_{\widetilde{\mathcal{U}}} f \Vert_{\ell^p_{m}(\Lambda;\Hil)} = \left( \sum_{\lambda \in \Lambda} \Vert \widetilde{U}_{\lambda} f \Vert_{\Hil}^p   \right)^{\frac{1}{p}}.$$
\end{definition}

In the case $p=\infty$, the definition of the co-orbit space $\Co_{m}^{\infty}(\mathcal{U})$ is more technical. It can be defined as a weighted modification of a certain normable subspace of the topological completion of the Hausdorff locally convex vector space $(\Hil, \sigma(\Hil,\Hil^{00}(\widetilde{\mathcal{U}})))$ (compare with \cite{haubakö25}), or as the topological dual space of $\Co^1_{1/m}(\mathcal{U})$ \cite{köbaLOC25}. Here, we omit a rigorous discussion of the space $\Co_{m}^{\infty}(\mathcal{U})$ and directly refer to \cite{köbaLOC25} for the details. 

\begin{theorem}\label{isometrycor}\cite{köbaLOC25}
The space $\Co_{m}^{p}(\mathcal{U})$ is a Banach space for $1\leq p \leq \infty$ and a quasi-Banach space for $\frac{d}{s-r}<p<1$, and $C_{\widetilde{\mathcal{U}}}: \Co_{m}^{p}(\mathcal{U}) \longrightarrow \ell^p_{\omega}(\Lambda;\Hil)$ is an isometry for all $\frac{d}{s-r} < p \leq \infty$. Furthermore, for all $\frac{d}{s-r}<p \leq \infty$, 
$$\Co_{m}^{p}(\mathcal{U}) = \Co_{m}^{p}(\widetilde{\mathcal{U}})$$
with equivalent norms. In particular, 
$$\Vert (\widetilde{U}_{\lambda} f)_{\lambda \in \Lambda} \Vert_{\ell_m^p(\Lambda;\Hil)} \asymp \Vert (U_{\lambda} f)_{\lambda \in \Lambda} \Vert_{\ell_m^p(\Lambda;\Hil)}$$
are equivalent norms for $\Co_{m}^{p}(\mathcal{U})$.
\end{theorem}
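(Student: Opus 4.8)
The plan is to reduce all four assertions to one mechanism: the mapping properties of the Gram--type matrices of $\mathcal{U}$ and $\widetilde{\mathcal{U}}$ on the weighted Bochner sequence spaces $\ell^p_m(\Lambda;\Hil)$. The key input I would isolate first is the following. By the $s$-localization of $\mathcal{U}$ and Theorem~\ref{duallocalized}, each of the four $\B(\Hil)$-valued matrices $G_{\mathcal{U}}=[U_\lambda U_\mu^*]$, $G_{\widetilde{\mathcal{U}}}=[\widetilde U_\lambda \widetilde U_\mu^*]$, $G_{\mathcal{U},\widetilde{\mathcal{U}}}=[U_\lambda \widetilde U_\mu^*]$ and $G_{\widetilde{\mathcal{U}},\mathcal{U}}=[\widetilde U_\lambda U_\mu^*]$ lies in the Jaffard class $\J_s$; since $s>d+r$ (so $s-r>d$), $m$ is $\nu_r$-moderate, and $\Lambda$ is relatively separated, every matrix in $\J_s$ defines a bounded operator on $\ell^p_m(\Lambda;\Hil)$ for all $\tfrac{d}{s-r}<p\le\infty$. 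For $p\ge 1$ this is a weighted Schur test, in which $\nu_r$-moderateness of $m$ turns the quotient $m(\lambda)/m(\mu)$ into a factor $(1+\vert\lambda-\mu\vert)^r$ absorbed by the decay $s$, leaving a summable tail because $s-r>d$; for $\tfrac{d}{s-r}<p<1$ one uses instead $p$-subadditivity of $\Vert\cdot\Vert^p$ and summability of $(1+\vert\cdot\vert)^{-(s-r)p}$, which holds precisely under the stated lower bound on $p$. This is the spectral-algebra machinery of \cite{koeba25,köbaLOC25}, which I would cite rather than reprove.

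Granting this, the first assertion is bookkeeping. On $\Hil^{00}(\mathcal{U})$ the functional $\Vert\cdot\Vert_{\Co^p_m(\mathcal{U})}=\Vert C_{\widetilde{\mathcal{U}}}\cdot\Vert_{\ell^p_m(\Lambda;\Hil)}$ is positively homogeneous and satisfies the triangle inequality for $p\ge1$, resp.\ the $p$-triangle inequality for $p<1$, simply because $\ell^p_m(\Lambda;\Hil)$ does; and it is definite, since $C_{\widetilde{\mathcal{U}}}f=0$ forces $f=D_{\mathcal{U}}C_{\widetilde{\mathcal{U}}}f=0$ by the dual g-frame reconstruction \eqref{dualgframeoperatornotation}. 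Hence it is a genuine (quasi-)norm and its completion $\Co^p_m(\mathcal{U})$ is a Banach space for $1\le p\le\infty$ and a quasi-Banach space for $\tfrac{d}{s-r}<p<1$. For the isometry statement (with $p<\infty$; see below for $p=\infty$), $C_{\widetilde{\mathcal{U}}}\colon\Hil^{00}(\mathcal{U})\to\ell^p_m(\Lambda;\Hil)$ is isometric by the very definition of the norm, so it extends uniquely to an isometry on $\Co^p_m(\mathcal{U})$ because the target is complete. Its range is the closed range of $P:=C_{\widetilde{\mathcal{U}}}D_{\mathcal{U}}$ on $\ell^p_m(\Lambda;\Hil)$: $P$ is bounded by the first paragraph (it is the matrix $G_{\widetilde{\mathcal{U}},\mathcal{U}}$), idempotent because $D_{\mathcal{U}}C_{\widetilde{\mathcal{U}}}=\mathcal{I}_{\B(\Hil)}$ by \eqref{dualgframeoperatornotation}, and $P(c_{00}(\Lambda;\Hil))=C_{\widetilde{\mathcal{U}}}(\Hil^{00}(\mathcal{U}))$ is dense in $\operatorname{ran}P$ since $c_{00}(\Lambda;\Hil)$ is dense in $\ell^p_m(\Lambda;\Hil)$ for $p<\infty$. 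This simultaneously provides the concrete realization $\Co^p_m(\mathcal{U})\cong\operatorname{ran}P\subseteq\ell^p_m(\Lambda;\Hil)$, which is what makes the subsequent comparison and duality statements meaningful.

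For the identification $\Co^p_m(\mathcal{U})=\Co^p_m(\widetilde{\mathcal{U}})$ I would first note that $\widetilde{\widetilde{\mathcal{U}}}=\mathcal{U}$ (canonical duality is an involution, as $S_{\widetilde{\mathcal{U}}}=S_{\mathcal{U}}^{-1}$), so by Definition~\ref{Hpwdef} the norm on $\Hil^{00}(\widetilde{\mathcal{U}})$ equals $\Vert C_{\mathcal{U}}\cdot\Vert_{\ell^p_m(\Lambda;\Hil)}$. The two reconstruction identities in \eqref{dualgframe} give, on $\Hil^{00}(\mathcal{U})$, the matrix relations $C_{\mathcal{U}}f=G_{\mathcal{U}}\,C_{\widetilde{\mathcal{U}}}f$ and $C_{\widetilde{\mathcal{U}}}f=G_{\widetilde{\mathcal{U}}}\,C_{\mathcal{U}}f$; together with boundedness of $G_{\mathcal{U}},G_{\widetilde{\mathcal{U}}}$ on $\ell^p_m(\Lambda;\Hil)$ this already yields the ``in particular'' estimate $\Vert(\widetilde U_\lambda f)_{\lambda}\Vert_{\ell^p_m}\asymp\Vert(U_\lambda f)_{\lambda}\Vert_{\ell^p_m}$ on this dense subspace. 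To promote it to an equality of completions I would show $\Hil^{00}(\mathcal{U})$ is dense in $\Co^p_m(\widetilde{\mathcal{U}})$, and symmetrically: for $f\in\Hil^{00}(\mathcal{U})$ and an exhaustion $\Lambda_N\uparrow\Lambda$, the truncated reconstructions $f_N:=\sum_{\mu\in\Lambda_N}\widetilde U_\mu^* U_\mu f=D_{\widetilde{\mathcal{U}}}(\chi_{\Lambda_N}C_{\mathcal{U}}f)$ lie in $\Hil^{00}(\widetilde{\mathcal{U}})$ and satisfy $C_{\mathcal{U}}(f-f_N)=G_{\mathcal{U},\widetilde{\mathcal{U}}}\big((1-\chi_{\Lambda_N})C_{\mathcal{U}}f\big)\to 0$ in $\ell^p_m(\Lambda;\Hil)$, because $C_{\mathcal{U}}f\in\ell^p_m(\Lambda;\Hil)$ has vanishing tails ($p<\infty$) and $G_{\mathcal{U},\widetilde{\mathcal{U}}}$ is bounded; by the norm equivalence above $f_N\to f$ in both $\Co^p_m(\mathcal{U})$- and $\Co^p_m(\widetilde{\mathcal{U}})$-norm. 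Hence the two completions share a dense subspace on which their norms are equivalent, so they coincide with equivalent norms, and the isometry property and the estimate transfer to the whole space.

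The main obstacle is the endpoint $p=\infty$, where $c_{00}(\Lambda;\Hil)$ fails to be dense in $\ell^\infty_m(\Lambda;\Hil)$, so the truncation/density arguments collapse; there the completion in Definition~\ref{Hpwdef} must be replaced by the alternative description of $\Co^\infty_m(\mathcal{U})$ from \cite{köbaLOC25}---either as the dual of $\Co^1_{1/m}(\mathcal{U})$ or as a weighted subspace of the $\sigma(\Hil,\Hil^{00}(\widetilde{\mathcal{U}}))$-completion---after which the isometry and the equality with $\Co^\infty_m(\widetilde{\mathcal{U}})$ follow by transporting the same Jaffard-class estimates through the duality; I would simply invoke \cite{köbaLOC25} for this case. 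A second, more cosmetic point is that for the statements ``$C_{\widetilde{\mathcal{U}}}$ is an isometry on $\Co^p_m(\mathcal{U})$'' and ``$\Co^p_m(\mathcal{U})=\Co^p_m(\widetilde{\mathcal{U}})$'' to be formulated at all, one needs the abstract completions and the operators $C_{\mathcal{U}},C_{\widetilde{\mathcal{U}}},D_{\mathcal{U}}$ to live inside one common reservoir space; this is exactly the framework set up in \cite{köbaLOC25}, and modulo it every remaining step is the bookkeeping indicated above.
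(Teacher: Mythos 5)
The paper does not prove this statement; it is imported verbatim from \cite{köbaLOC25}, so there is no in-paper argument to compare against. Your reconstruction is sound and follows the route one would expect that reference to take: the entire theorem is driven by the fact that the four Gram-type matrices $G_{\mathcal{U}}$, $G_{\widetilde{\mathcal{U}}}$, $G_{\mathcal{U},\widetilde{\mathcal{U}}}$, $G_{\widetilde{\mathcal{U}},\mathcal{U}}$ lie in the spectral algebra $\J_s$ (via Theorem~\ref{duallocalized}) and hence act boundedly on $\ell^p_m(\Lambda;\Hil)$ for $\tfrac{d}{s-r}<p\le\infty$, after which definiteness of the quasi-norm, the isometric extension of $C_{\widetilde{\mathcal{U}}}$ from $\Hil^{00}(\mathcal{U})$, the identification of its range with $\operatorname{ran}(C_{\widetilde{\mathcal{U}}}D_{\mathcal{U}})$, and the two-sided estimates $C_{\mathcal{U}}f=G_{\mathcal{U}}C_{\widetilde{\mathcal{U}}}f$, $C_{\widetilde{\mathcal{U}}}f=G_{\widetilde{\mathcal{U}}}C_{\mathcal{U}}f$ are all routine. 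Your truncation argument showing that $\Hil^{00}(\mathcal{U})$ and $\Hil^{00}(\widetilde{\mathcal{U}})$ have the same closure is correct for $p<\infty$, and you rightly flag the two genuine technical debts: the case $p=\infty$ (where $c_{00}$ is not dense, so the completion picture breaks and one must pass to the dual or reservoir description) and the need for a common ambient space in which both completions sit before ``$\Co^p_m(\mathcal{U})=\Co^p_m(\widetilde{\mathcal{U}})$'' is even a meaningful equation. Both are resolved in \cite{köbaLOC25}, and deferring them is appropriate for a result the present paper itself only cites. One small point worth making explicit if you write this up: for $f=D_{\mathcal{U}}c$ with $c\in c_{00}(\Lambda;\Hil)$ you should note that $C_{\widetilde{\mathcal{U}}}f=G_{\widetilde{\mathcal{U}},\mathcal{U}}c$ is finite in $\ell^p_m$ precisely because of the same Jaffard-class boundedness, so that the quasi-norm in Definition~\ref{Hpwdef} is finite on $\Hil^{00}(\mathcal{U})$ to begin with.
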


Next, we consider the synthesis operator $D_{\mathcal{U}}: \ell_m^p(\Lambda, \Hil) \longrightarrow \Co_m^p(\mathcal{U})$. For simplicity, we use the same symbol for each of the operators 
$$D_{\mathcal{U}}:\ell_{\omega}^{p}(\Lambda;\Hil)\longrightarrow \Co^{p}_{m}(\mathcal{U}) \qquad (d/(s-r) < p < \infty),$$
densely defined on $c_{00}(\Lambda;\Hil)$ via 
$$D_{\mathcal{U}}(g_{\mu})_{\mu\in \Lambda} = \sum_{\mu\in \Lambda} U_{\mu}^* g_{\mu}.$$
Again, we omit the precise definition of  
$D_{\mathcal{U}}$ in the case $p=\infty$.  

\begin{theorem}\label{Donto}
\cite{köbaLOC25} Under the same assumptions as in Definition \ref{Hpwdef}, the synthesis operator $D_{\mathcal{U}}: \ell_{m}^{p}(\Lambda;\Hil) \longrightarrow  \Co_{m}^{p}(\mathcal{U})$ is bounded and surjective for each $\frac{d}{s-r}<p \leq \infty$ and 
$$\Vert f \Vert_{\Co_{m}^p(\mathcal{U})} \asymp \inf\left\lbrace \Vert (g_{\lambda})_{\lambda \in \Lambda} \Vert_{\ell^p_{m}(\Lambda;\Hil)} : (g_{\lambda})_{\lambda \in \Lambda}\in \ell^p_{\omega}(\Lambda;\Hil), f = D_{\mathcal{U}} (g_{\lambda})_{\lambda \in \Lambda} \right\rbrace.$$
\end{theorem}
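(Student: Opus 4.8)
The plan is to reduce all three assertions to the boundedness of a single Jaffard-class matrix on the weighted Bochner sequence spaces. For \textbf{boundedness}, I would work first on the dense subspace $c_{00}(\Lambda;\Hil)$ of $\ell^p_m(\Lambda;\Hil)$ (dense for every $p<\infty$). Given $(g_\mu)_{\mu\in\Lambda}\in c_{00}(\Lambda;\Hil)$, the element $f=D_{\mathcal{U}}(g_\mu)_{\mu\in\Lambda}$ lies in $\Hil^{00}(\mathcal{U})$, so by the definition of the co-orbit (quasi-)norm together with Theorem \ref{isometrycor},
$$\Vert f\Vert_{\Co^p_m(\mathcal{U})}=\Vert C_{\widetilde{\mathcal{U}}}f\Vert_{\ell^p_m(\Lambda;\Hil)}=\Vert C_{\widetilde{\mathcal{U}}}D_{\mathcal{U}}(g_\mu)_{\mu\in\Lambda}\Vert_{\ell^p_m(\Lambda;\Hil)}.$$
The composition $C_{\widetilde{\mathcal{U}}}D_{\mathcal{U}}$ is exactly the mixed g-Gram matrix $G_{\widetilde{\mathcal{U}},\mathcal{U}}=[\widetilde{U}_\lambda U_\mu^*]_{\lambda,\mu\in\Lambda}$, which by Theorem \ref{duallocalized}(ii) and the Remark following \eqref{mutually} lies in the Jaffard class $\J_s$ with $s>d+r$. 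Since $m$ is $\nu_r$-moderate and $d/(s-r)<p\le\infty$, matrices in $\J_s$ map $\ell^p_m(\Lambda;\Hil)$ boundedly into itself (this is the mapping property of the Jaffard class on weighted Bochner sequence spaces from the localization theory of \cite{köbaLOC25}; see also \cite{koeba25}), so $\Vert f\Vert_{\Co^p_m(\mathcal{U})}\lesssim\Vert(g_\mu)_{\mu\in\Lambda}\Vert_{\ell^p_m(\Lambda;\Hil)}$, and for $p<\infty$ this extends to all of $\ell^p_m(\Lambda;\Hil)$ by density. The endpoint $p=\infty$ I would treat by duality, identifying $D_{\mathcal{U}}$ on $\ell^\infty_m(\Lambda;\Hil)$ with the Banach-space adjoint of $C_{\widetilde{\mathcal{U}}}:\Co^1_{1/m}(\mathcal{U})\to\ell^1_{1/m}(\Lambda;\Hil)$ via $\Co^\infty_m(\mathcal{U})=(\Co^1_{1/m}(\mathcal{U}))'$, and using that $\J_s$ is stable under passing to the adjoint matrix.

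For \textbf{surjectivity} and the norm equivalence, the key point is the canonical dual reconstruction identity $D_{\mathcal{U}}C_{\widetilde{\mathcal{U}}}=\mathcal{I}$. On $\Hil$ this is immediate from $\widetilde{U}_\lambda=U_\lambda S_{\mathcal{U}}^{-1}$ and the definition of $S_{\mathcal{U}}$ (equivalently, it is the special case $\mathcal{W}=\widetilde{\mathcal{U}}$ of \eqref{dualgframeoperatornotation}). Since $C_{\widetilde{\mathcal{U}}}:\Co^p_m(\mathcal{U})\to\ell^p_m(\Lambda;\Hil)$ is an isometry by Theorem \ref{isometrycor}, $D_{\mathcal{U}}:\ell^p_m(\Lambda;\Hil)\to\Co^p_m(\mathcal{U})$ is bounded by the previous step, and the two operators coincide with their Hilbert-space counterparts on the dense subspace $\Hil^{00}(\mathcal{U})$, the identity $D_{\mathcal{U}}C_{\widetilde{\mathcal{U}}}f=f$ extends by continuity to every $f\in\Co^p_m(\mathcal{U})$. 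Thus $D_{\mathcal{U}}$ is onto, the sequence $(\widetilde{U}_\lambda f)_{\lambda\in\Lambda}=C_{\widetilde{\mathcal{U}}}f$ is an admissible representing sequence for $f$, and hence
$$\inf\{\Vert(g_\lambda)_{\lambda\in\Lambda}\Vert_{\ell^p_m(\Lambda;\Hil)}:f=D_{\mathcal{U}}(g_\lambda)_{\lambda\in\Lambda}\}\le\Vert C_{\widetilde{\mathcal{U}}}f\Vert_{\ell^p_m(\Lambda;\Hil)}=\Vert f\Vert_{\Co^p_m(\mathcal{U})}.$$
Conversely, for any admissible $(g_\lambda)_{\lambda\in\Lambda}$ boundedness of $D_{\mathcal{U}}$ gives $\Vert f\Vert_{\Co^p_m(\mathcal{U})}\le\Vert D_{\mathcal{U}}\Vert\,\Vert(g_\lambda)_{\lambda\in\Lambda}\Vert_{\ell^p_m(\Lambda;\Hil)}$, and taking the infimum over all such sequences yields the reverse comparison, so $\Vert f\Vert_{\Co^p_m(\mathcal{U})}\asymp\inf\{\cdots\}$.

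The step I expect to be the main obstacle is the boundedness of the Jaffard class $\J_s$ on the weighted \emph{Bochner} sequence spaces $\ell^p_m(\Lambda;\Hil)$ throughout the full range $d/(s-r)<p\le\infty$, particularly in the quasi-Banach regime $0<p<1$, where the classical Schur-test and interpolation arguments must be replaced by a $p$-triangle-inequality estimate that also keeps track of the moderateness order $r$; this is precisely where I would rely on the results of \cite{köbaLOC25}. The $p=\infty$ endpoint, handled through the duality $\Co^\infty_m(\mathcal{U})=(\Co^1_{1/m}(\mathcal{U}))'$, is a secondary technical point.
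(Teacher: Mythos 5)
The paper does not prove Theorem \ref{Donto} itself --- it is imported verbatim from \cite{köbaLOC25} --- so there is no in-text proof to compare against. Your sketch is the standard localization-theory argument and is correct in outline: boundedness via the mixed Gram matrix $C_{\widetilde{\mathcal{U}}}D_{\mathcal{U}}=G_{\widetilde{\mathcal{U}},\mathcal{U}}\in\J_s$ acting on $\ell^p_m(\Lambda;\Hil)$, surjectivity and the infimum norm equivalence via the reconstruction identity $D_{\mathcal{U}}C_{\widetilde{\mathcal{U}}}=\mathcal{I}$ extended by continuity, with the two inequalities of the equivalence coming from the isometry of $C_{\widetilde{\mathcal{U}}}$ and the boundedness of $D_{\mathcal{U}}$ respectively. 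You also correctly identify the one genuinely nontrivial ingredient you must import rather than prove --- the boundedness of Jaffard-class matrices on the weighted Bochner sequence spaces across the full quasi-Banach range $d/(s-r)<p\le\infty$ --- which is exactly the content supplied by the spectral-algebra machinery of \cite{köbaLOC25,koeba25}.
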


Finally, we provide an alternative description of the spaces $\Co_m^p(\mathcal{U})$ for $p<\infty$.

\begin{proposition}\label{Vpw}
Under the same assumptions as in Definition \ref{slocalized}, $\Co_{m}^p(\mathcal{U})$ is the completion of the space 
$$V_{m}^p(\mathcal{U}) = \lbrace f\in \Hil : C_{\widetilde{\mathcal{U}}}f \in \ell^{p}_{m}(\Lambda;\Hil) \rbrace$$
with respect to $\Vert \, . \, \Vert_{\Co_{m}^p(\mathcal{U})}$ for each $\frac{d}{s-r}<p < \infty$. In particular, if $\ell^p_m(\Lambda,\Hil)$ is continuously embedded in $\ell^2(\Lambda,\Hil)$, then $\Co_{m}^p(\mathcal{U}) = V_{m}^p(\mathcal{U})$.
\end{proposition}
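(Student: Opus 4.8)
The plan is to realise both spaces inside $\ell^p_m(\Lambda;\Hil)$ via the analysis operator $C_{\widetilde{\mathcal{U}}}$ and to prove that $\Hil^{00}(\mathcal{U})$ is a $\Vert \, . \, \Vert_{\Co^p_m(\mathcal{U})}$-dense linear subspace of $V^p_m(\mathcal{U})$; since a (quasi-)normed space and any dense subspace of it have the same completion, the first assertion then drops out of Definition \ref{Hpwdef}. First I would record that $\Vert \, . \, \Vert_{\Co^p_m(\mathcal{U})} = \Vert C_{\widetilde{\mathcal{U}}}\, . \,\Vert_{\ell^p_m(\Lambda;\Hil)}$, read off Definition \ref{Hpwdef}, makes sense and is a genuine (quasi-)norm on the linear space $V^p_m(\mathcal{U})$: finiteness is exactly the defining membership condition, and it separates points because $C_{\widetilde{\mathcal{U}}}$, being the analysis operator of the canonical dual g-frame, is bounded below, hence injective on $\Hil$. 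Then I would check the inclusion $\Hil^{00}(\mathcal{U}) \subseteq V^p_m(\mathcal{U})$: for $f = D_{\mathcal{U}}(g_\mu)_\mu$ with $(g_\mu)_\mu \in c_{00}(\Lambda;\Hil)$ one has $C_{\widetilde{\mathcal{U}}} f = C_{\widetilde{\mathcal{U}}}D_{\mathcal{U}}(g_\mu)_\mu = G_{\widetilde{\mathcal{U}},\mathcal{U}}(g_\mu)_\mu$, and by Theorem \ref{duallocalized}(ii) the mixed g-Gram matrix $G_{\widetilde{\mathcal{U}},\mathcal{U}} = [\widetilde{U}_\lambda U_\mu^*]_{\lambda,\mu}$ lies in the Jaffard class $\J_s$, which acts boundedly on $\ell^p_m(\Lambda;\Hil)$ for every $\frac{d}{s-r}<p<\infty$ and $\nu_r$-moderate $m$ (the mapping property from \cite{köbaLOC25}); evaluated on the finitely supported sequence $(g_\mu)_\mu$ this yields $C_{\widetilde{\mathcal{U}}} f \in \ell^p_m(\Lambda;\Hil)$.

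For density I would invoke the canonical-dual reconstruction $f = D_{\mathcal{U}} C_{\widetilde{\mathcal{U}}} f$, valid for every $f\in\Hil$. Fix $f\in V^p_m(\mathcal{U})$ and a finite set $F\subset\Lambda$, and set $f_F := D_{\mathcal{U}}\!\left(\chi_F\cdot C_{\widetilde{\mathcal{U}}}f\right)\in\Hil^{00}(\mathcal{U})$. Using $C_{\widetilde{\mathcal{U}}}D_{\mathcal{U}} = G_{\widetilde{\mathcal{U}},\mathcal{U}}$ together with $G_{\widetilde{\mathcal{U}},\mathcal{U}}\,C_{\widetilde{\mathcal{U}}} = C_{\widetilde{\mathcal{U}}}\!\left(D_{\mathcal{U}}C_{\widetilde{\mathcal{U}}}\right) = C_{\widetilde{\mathcal{U}}}$ on $\Hil$, one obtains $C_{\widetilde{\mathcal{U}}}(f-f_F) = G_{\widetilde{\mathcal{U}},\mathcal{U}}\!\left((1-\chi_F)\cdot C_{\widetilde{\mathcal{U}}}f\right)$, whence
$$\Vert f-f_F\Vert_{\Co^p_m(\mathcal{U})} \leq \Vert G_{\widetilde{\mathcal{U}},\mathcal{U}}\Vert_{\ell^p_m\to\ell^p_m}\,\Vert (1-\chi_F)\cdot C_{\widetilde{\mathcal{U}}}f\Vert_{\ell^p_m(\Lambda;\Hil)}.$$
Since $C_{\widetilde{\mathcal{U}}}f\in\ell^p_m(\Lambda;\Hil)$ and $p<\infty$, the finitely supported sequences are dense in $\ell^p_m(\Lambda;\Hil)$, so the right-hand side tends to $0$ as $F$ increases to $\Lambda$. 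This establishes density of $\Hil^{00}(\mathcal{U})$ in $\left(V^p_m(\mathcal{U}),\Vert \, . \, \Vert_{\Co^p_m(\mathcal{U})}\right)$, and hence that $\Co^p_m(\mathcal{U})$, the completion of $\Hil^{00}(\mathcal{U})$, is also the completion of $V^p_m(\mathcal{U})$; under the isometry of Theorem \ref{isometrycor} both are the $\ell^p_m$-closure of $C_{\widetilde{\mathcal{U}}}\!\left(\Hil^{00}(\mathcal{U})\right)$.

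For the ``in particular'' clause it remains to show that $V^p_m(\mathcal{U})$ is already complete under $\Vert \, . \, \Vert_{\Co^p_m(\mathcal{U})}$ when $\ell^p_m(\Lambda;\Hil)\hookrightarrow\ell^2(\Lambda;\Hil)$ continuously. Let $(f_n)$ be $\Vert \, . \, \Vert_{\Co^p_m(\mathcal{U})}$-Cauchy in $V^p_m(\mathcal{U})$; then $(C_{\widetilde{\mathcal{U}}}f_n)$ is Cauchy in $\ell^p_m(\Lambda;\Hil)$, say $C_{\widetilde{\mathcal{U}}}f_n\to c$ there, and by the embedding it is also Cauchy in $\ell^2(\Lambda;\Hil)$; since $C_{\widetilde{\mathcal{U}}}$ is bounded below, $(f_n)$ is Cauchy in $\Hil$ and converges to some $f\in\Hil$, so $C_{\widetilde{\mathcal{U}}}f_n\to C_{\widetilde{\mathcal{U}}}f$ in $\ell^2(\Lambda;\Hil)$. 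Comparing the two limits coordinatewise gives $C_{\widetilde{\mathcal{U}}}f = c\in\ell^p_m(\Lambda;\Hil)$, i.e.\ $f\in V^p_m(\mathcal{U})$, and $\Vert f_n-f\Vert_{\Co^p_m(\mathcal{U})}=\Vert C_{\widetilde{\mathcal{U}}}f_n-c\Vert_{\ell^p_m(\Lambda;\Hil)}\to 0$. Thus $V^p_m(\mathcal{U})$ is complete, and being dense in $\Co^p_m(\mathcal{U})$ it must equal it.

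The step I expect to be the main obstacle is the $\ell^p_m$-boundedness of the mixed g-Gram matrix $G_{\widetilde{\mathcal{U}},\mathcal{U}}$ over the full admissible range $\frac{d}{s-r}<p<\infty$, which (since $s-r>d$) includes quasi-Banach exponents $p<1$; this rests on the Jaffard-class Schur/convolution estimates and the $\nu_r$-moderateness of $m$ from \cite{köbaLOC25}, and for $p<1$ every triangle inequality and operator-norm estimate above has to be read in the $p$-(quasi-)norm sense. The remaining ingredients — injectivity and bounded-belowness of the analysis operator, density of $c_{00}(\Lambda;\Hil)$ in $\ell^p_m(\Lambda;\Hil)$, and the fact that a (quasi-)normed space and a dense subspace share a completion — are routine.
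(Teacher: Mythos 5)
Your proof is correct. The paper itself gives no proof of Proposition \ref{Vpw} — it is quoted from the localized g-frames framework of \cite{köbaLOC25}, like the surrounding Theorems \ref{duallocalized}--\ref{samecoorbitspaces} — so there is no in-text argument to compare against, but your route (realize both spaces via $C_{\widetilde{\mathcal{U}}}$, show $\Hil^{00}(\mathcal{U})\subseteq V^p_m(\mathcal{U})$ is dense using the identity $C_{\widetilde{\mathcal{U}}}(f-f_F)=G_{\widetilde{\mathcal{U}},\mathcal{U}}\bigl((1-\chi_F)C_{\widetilde{\mathcal{U}}}f\bigr)$, then prove completeness of $V^p_m(\mathcal{U})$ under the embedding hypothesis) is exactly the standard argument in that theory. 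You correctly isolate the one nontrivial input, namely the $\ell^p_m$-boundedness of the Jaffard-class mixed Gram matrix for all $\frac{d}{s-r}<p<\infty$, which is legitimately imported from \cite{köbaLOC25} since the neighboring results already presuppose it; the only point I would make explicit is that the matrix $G_{\widetilde{\mathcal{U}},\mathcal{U}}$, defined as $C_{\widetilde{\mathcal{U}}}D_{\mathcal{U}}$ on $\ell^2(\Lambda;\Hil)$, agrees with its $\ell^p_m$-extension on $\ell^p_m\cap\ell^2$ (both agree on $c_{00}$ and are continuous in the respective topologies), which is what licenses applying it entrywise to $(1-\chi_F)C_{\widetilde{\mathcal{U}}}f$.
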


So far, the co-orbit spaces $\Co_{m}^p(\mathcal{U})$ seem to depend heavily on the underlying localized g-frame $\mathcal{U}$. However, if $\mathcal{W}$ is another localized g-frame and if $\mathcal{U}$ and $\mathcal{W}$ are mutually localized, then their associated co-orbit space coincide, see below.

\begin{theorem}\label{samecoorbitspaces}\cite{köbaLOC25}
Let $s>d+r$ for some $r\geq 0$ and assume that $m$ is a $\nu_r$-moderate weight. Assume that $\mathcal{U}$ and $\mathcal{W}$ both are intrinsically $s$-localized g-frames and assume that $\mathcal{U}$ and $\mathcal{W}$ are also mutually $s$-localized. Then, for all $\frac{d}{s-r} < p \leq \infty$, 
$$\Co_{m}^p(\mathcal{U}) = Co_{m}^p(\mathcal{W})$$
with equivalent norms.
\end{theorem}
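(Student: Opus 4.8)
The plan is to produce an explicit ``change-of-frame'' matrix lying in the Jaffard class $\J_s$ that intertwines the two synthesis operators $D_{\mathcal U}$ and $D_{\mathcal W}$, and then to read off the norm equivalence directly from the coefficient-free description of $\Vert\,\cdot\,\Vert_{\Co_m^p(\mathcal U)}$ provided by Theorem~\ref{Donto}. The relevant object is the mixed g-Gram matrix of $\widetilde{\mathcal W}$ and $\mathcal U$,
$$M := \bigl[\, \widetilde W_\lambda\, U_\mu^* \,\bigr]_{\lambda,\mu\in\Lambda} \;=\; C_{\widetilde{\mathcal W}}\, D_{\mathcal U}.$$
First I would establish the factorization $M = G_{\widetilde{\mathcal W}}\,G_{\mathcal W,\mathcal U}$, where $G_{\widetilde{\mathcal W}} = [\widetilde W_\lambda \widetilde W_\nu^*]_{\lambda,\nu}$ is the g-Gram matrix of $\widetilde{\mathcal W}$ and $G_{\mathcal W,\mathcal U} = [W_\nu U_\mu^*]_{\nu,\mu}$ the mixed g-Gram matrix of $\mathcal W$ and $\mathcal U$; this is obtained by inserting the resolution $\sum_\nu \widetilde W_\nu^* W_\nu = \mathcal{I}_{\B(\Hil)}$ (a canonical dual reconstruction identity, cf.\ (\ref{gframerec})) between $\widetilde W_\lambda$ and $U_\mu^*$, with the rearrangement justified by the absolute convergence coming from the localization estimates.

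By Theorem~\ref{duallocalized}(i), $\widetilde{\mathcal W}$ is again $s$-localized, so $G_{\widetilde{\mathcal W}}\in\J_s$; by hypothesis $\mathcal U$ and $\mathcal W$ are mutually $s$-localized, so $G_{\mathcal W,\mathcal U}\in\J_s$. Since $s>d+r\ge d$ and $\Lambda$ is relatively separated, $\J_s$ is a Banach algebra, whence $M = G_{\widetilde{\mathcal W}}G_{\mathcal W,\mathcal U}\in\J_s$; equivalently, $\widetilde{\mathcal W}$ and $\mathcal U$ are mutually $s$-localized. I then invoke the second ingredient: every matrix in $\J_s$ acts as a bounded operator on the weighted Bochner sequence space $\ell_m^p(\Lambda;\Hil)$ for each $\tfrac{d}{s-r}<p\le\infty$ — a weighted Schur-type estimate built on the $\nu_r$-moderateness of $m$ and the decay exponent $s>d+r$, the lower restriction on $p$ being exactly what is needed to run the estimate with the $p$-triangle inequality in the quasi-Banach regime (this is part of the machinery of \cite{köbaLOC25}, see also \cite{koeba25}). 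Hence $M:\ell_m^p(\Lambda;\Hil)\to\ell_m^p(\Lambda;\Hil)$ is bounded.

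Next comes the intertwining identity $D_{\mathcal W}\circ M = D_{\mathcal U}$. On $c_{00}(\Lambda;\Hil)$ it is the short computation
$$D_{\mathcal W}(Mg) = \sum_{\lambda} W_\lambda^*\!\Bigl(\sum_\mu \widetilde W_\lambda U_\mu^* g_\mu\Bigr) = \sum_\mu\!\Bigl(\sum_\lambda W_\lambda^* \widetilde W_\lambda\Bigr) U_\mu^* g_\mu = \sum_\mu U_\mu^* g_\mu = D_{\mathcal U}g,$$
again using $\sum_\lambda W_\lambda^* \widetilde W_\lambda = \mathcal{I}_{\B(\Hil)}$ and absolute convergence; it extends to all of $\ell_m^p(\Lambda;\Hil)$ by density of $c_{00}$ and boundedness of $M$, $D_{\mathcal U}$, $D_{\mathcal W}$ (Theorem~\ref{Donto}). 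Thus any $f = D_{\mathcal U}g$ with $g\in\ell_m^p(\Lambda;\Hil)$ also equals $D_{\mathcal W}(Mg)$, so the infimum characterization in Theorem~\ref{Donto} yields a constant $C>0$ with
$$\Vert f\Vert_{\Co_m^p(\mathcal W)} \le C\,\Vert Mg\Vert_{\ell_m^p(\Lambda;\Hil)} \le C\,\Vert M\Vert\,\Vert g\Vert_{\ell_m^p(\Lambda;\Hil)};$$
taking the infimum over all admissible $g$ and applying the same characterization to $\mathcal U$ gives $\Vert f\Vert_{\Co_m^p(\mathcal W)} \le C'\,\Vert f\Vert_{\Co_m^p(\mathcal U)}$, in particular $\Co_m^p(\mathcal U)\subseteq\Co_m^p(\mathcal W)$. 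Interchanging the roles of $\mathcal U$ and $\mathcal W$ gives the reverse inclusion and bound, hence the asserted equality with equivalent norms. The case $p=\infty$ runs along the same lines once $\Co_m^\infty$ and $D_{\mathcal U}$ on $\ell_m^\infty$ are read off from \cite{köbaLOC25}.

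I expect the main obstacle to be the functional-analytic bookkeeping rather than the algebra: namely (i) pinning down, with the precise hypotheses, the boundedness of $\J_s$-matrices on the weighted \emph{Bochner} sequence spaces $\ell_m^p(\Lambda;\Hil)$ across the full range $\tfrac{d}{s-r}<p\le\infty$ — in particular the quasi-Banach range $p<1$, where the Schur test must be adapted; and (ii) making rigorous the transfer from the intertwining identity on sequences to a statement about the abstract \emph{completions} $\Co_m^p(\mathcal U)$ and $\Co_m^p(\mathcal W)$ — i.e.\ checking that $M$ descends to a bounded map between these completions compatibly with the canonical identifications, and dealing with $p=\infty$, where the spaces are defined by duality rather than as completions. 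Both points are essentially available in \cite{köbaLOC25}, so in the write-up I would quote them as two lemmas and keep the argument above as the skeleton.
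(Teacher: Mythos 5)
This theorem is imported from \cite{köbaLOC25}; the paper contains no proof of it, so there is nothing internal to compare your argument against line by line. That said, your proposal is the standard and correct argument for results of this type: the factorization $C_{\widetilde{\mathcal W}}D_{\mathcal U} = (C_{\widetilde{\mathcal W}}D_{\widetilde{\mathcal W}})(C_{\mathcal W}D_{\mathcal U}) = G_{\widetilde{\mathcal W}}G_{\mathcal W,\mathcal U}$, the Banach-algebra property of $\J_s$ together with Theorem~\ref{duallocalized} to place this change-of-frame matrix in $\J_s$, boundedness of $\J_s$-matrices on $\ell^p_m(\Lambda;\Hil)$ in the stated range of $p$, the intertwining $D_{\mathcal W}\circ M = D_{\mathcal U}$, and the quotient-norm characterization of Theorem~\ref{Donto} — all the pieces fit, and the symmetry of the hypotheses gives the reverse inclusion. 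The two points you flag as deferred are the genuine technical content: the weighted (quasi-)Schur estimate for $\J_s$ on Bochner sequence spaces down to $p>\tfrac{d}{s-r}$, and the realization of both completions (and the $p=\infty$ spaces) inside a common reservoir so that ``$D_{\mathcal U}g = D_{\mathcal W}(Mg)$'' is a statement about the same object; both are supplied by \cite{köbaLOC25}, and quoting them as lemmas, as you propose, is the right way to write this up. A cosmetic alternative you might prefer is to run the same matrix on the analysis side, via $C_{\widetilde{\mathcal W}}f = M\,C_{\widetilde{\mathcal U}}f$, which bounds the defining norm $\Vert C_{\widetilde{\mathcal W}}f\Vert_{\ell^p_m}$ directly without passing through the infimum characterization; it carries exactly the same bookkeeping burden.
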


%

\subsection{Time-frequency analysis and modulation spaces}

Modulation spaces arise naturally in the context of time-frequency analysis through the action of \emph{time-frequency shifts}, given by the projective unitary representation of the reduced Weyl--Heisenberg group on the Hilbert space $L^2(\mathbb{R}^d)$. For $z=(x,\omega)\in \mathbb{R}^{2d}$, the time-frequency shift $\pi(z)$ is defined as the composition of the translation operator $T_x:f(t)\mapsto f(t-x)$, and the modulation operator $M_{\omega}:f(t)\mapsto e^{2\pi i \omega t}f(t)$, that is, 
\begin{align*}
    \pi(z) = M_{\omega}T_x.
\end{align*}


\medskip

The \emph{Short-Time Fourier Transform (STFT)} of $f \in L^2(\mathbb{R}^d)$ with respect to a window $g\in L^2(\mathbb{R}^d)$ is defined by
\[
V_g f(z) := \langle f, \pi(z) g \rangle_{L^2},
\quad z\in \mathbb{R}^{2d}.
\]
Typical windows include compactly supported functions or rapidly decaying functions such as the normalized Gaussian $\varphi_0(t) = 2^{d/4} e^{-\pi t^2}$. For $f,g \in L^2(\mathbb{R}^d)$ the mapping $z \mapsto V_g f(z)$ belongs to $L^2(\mathbb{R}^{2d})$ and is uniformly continuous.
As a consequence of Moyal's identity $$\langle V_{g_1} f_1, V_{g_2} f_2 \rangle_{L^2(\mathbb{R}^{2d})}
= \langle f_1, f_2 \rangle_{L^2(\mathbb{R}^d)} \,
\overline{\langle g_1, g_2 \rangle_{L^2(\mathbb{R}^d)}},$$   $V_g: L^2(\mathbb{R}^d) \to L^2(\mathbb{R}^{2d})$ is an isometry and allows reconstruction by 
\[
f = \int_{\mathbb{R}^{2d}} V_g f(z)\, \pi(z) g \, dz . 
\]
We finally mention that its adjoint operator is 
\[
V_g^* F = \int_{\mathbb{R}^{2d}} F(z)\, \pi(z) g \, dz,
\]
with interpretation of integrals  in the weak sense. In other words,  
\(V_g^* V_g = I_{L^2(\mathbb{R}^d)}\).

The briefly introduced framework of the STFT now suggests the following definition of special cases of coorbit spaces, namely the {\em modulation spaces}. 

We begin by considering the weighted modulation space $M^1_{\nu_s}(\mathbb{R}^{d})$, where $\nu_s$ ($s\geq 0$) is the polynomial weight introduced in Subsection \ref{Weight functions}. The space $M^1_{\nu_s}(\mathbb{R}^{d})$ is defined as the collection of functions whose STFT with respect to the Gaussian window $\varphi_0(t) = 2^{\frac{d}{4}}e^{-\pi t^2}$ belongs to the weighted $L^1$ space, i.e., 
\[
M^1_v(\mathbb{R}^{d}) := \{ f \in L^2(\mathbb{R}^d) : V_{\varphi_0} f \in L^1_v(\mathbb{R}^{2d}) \}.
\]
For each $s\geq 0$, $M^1_{\nu_s}(\mathbb{R}^{d})$ is a Banach space with respect to the norm 
\[
\| f \|_{M^1_v} := \| V_{\varphi_0} f \|_{L^1_v}
\] and it is nontrivial, as it contains $\varphi_0$ itself. Moreover, the Schwartz space $\mathscr{S}(\mathbb{R}^d)$ is contained in $M^1_{\nu_s}(\mathbb{R}^{d})$ and $M^1_{\nu_s}(\mathbb{R}^{d})$ is invariant under time-frequency shifts. 
%
The unweighted case $M^1(\mathbb{R}^d)$ is known as \emph{Feichtinger's algebra}, a fundamental object in time-frequency analysis which provides an ideal space of test functions. We refer to Feichtinger's original paper \cite{feich81} and the survey \cite{jakob18} for further background.

\medskip

General modulation spaces are defined as follows. Given a $\nu_s$-moderate weight $m$ and exponents $1 \leq p,q \leq \infty$, one sets
\[
M^{p,q}_m(\mathbb{R}^{d}) := \{ f \in (M^1_v(\mathbb{R}^{d}))' : V_{\varphi_0} f \in L^{p,q}_m(\mathbb{R}^{2d}) \},
\]
with norm
\[
\| f \|_{M^{p,q}_m} := \| V_{\varphi_0} f \|_{L^{p,q}_m}.
\]
For any $g \in M^1_{\nu_s}(\mathbb{R}^{d})$, the space
\[
\{ f \in (M^1_v(\mathbb{R}^{d}))' : V_{g} f \in L^{p,q}_m(\mathbb{R}^{2d}) \}
\]
coincides with $M^{p,q}_m(\mathbb{R}^{d})$, and the corresponding norms are equivalent. For $p=q$ we abbreviate $M^{p}_m(\mathbb{R}^{d}):= M^{p,p}_m(\mathbb{R}^{d})$. Note that, as a direct consequence of the properties of the STFT with Gaussian window, we have
$M^{2}(\mathbb{R}^d) = L^2(\mathbb{R}^d)$. Finally, we collect the following result for later reference.

\begin{proposition}
\label{Mpprop}\cite{gr01}
If $1 \leq p_1 \leq p_2 \leq \infty$ and $m_2(z) \leq Cm_1(z)$ for some $C>0$, then $M^{p_1}_{m_1}(\mathbb{R}^d)$ is continuously and densely embedded in $M^{p_2}_{m_2}(\mathbb{R}^d)$.
\end{proposition}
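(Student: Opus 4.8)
The plan is to deduce the statement from the classical convolution (reproducing-kernel) relation for the Gaussian-windowed STFT together with a weighted Young inequality, exactly as in \cite{gr01}. Set $\Psi := |V_{\varphi_0}\varphi_0|$ on $\mathbb{R}^{2d}$. Since $\|\varphi_0\|_{L^2}=1$ and $V_{\varphi_0}\varphi_0$ is, up to a unimodular factor, a Gaussian on $\mathbb{R}^{2d}$, $\Psi$ is itself a Gaussian; in particular $\nu_s\Psi\in L^r(\mathbb{R}^{2d})$ for every $s\ge 0$ and every $r\in[1,\infty]$. Enlarging $s$ if necessary, I may assume that $m_1$ and $m_2$ are both $\nu_s$-moderate and that $v=\nu_s$, so that $M^{p_1}_{m_1}$ and $M^{p_2}_{m_2}$ sit inside the common ambient space $(M^1_v)'$. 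For any $f\in(M^1_v)'$ one has, from $V_{\varphi_0}f(z)=\int_{\mathbb{R}^{2d}}V_{\varphi_0}f(w)\,\langle\pi(w)\varphi_0,\pi(z)\varphi_0\rangle\,dw$ and $|\langle\pi(w)\varphi_0,\pi(z)\varphi_0\rangle|=\Psi(z-w)$, the pointwise domination
\[
|V_{\varphi_0}f(z)|\ \le\ \bigl(|V_{\varphi_0}f|*\Psi\bigr)(z),\qquad z\in\mathbb{R}^{2d}.
\]

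For the continuous inclusion I would first treat the case $m_1=m_2$. Using $\nu_s$-moderateness of $m_1$, namely $m_1(z)\le C_0\,m_1(z-w)\,\nu_s(w)$, and multiplying the previous inequality by $m_1(z)$ gives
\[
m_1(z)\,|V_{\varphi_0}f(z)|\ \le\ C_0\,\Bigl(\bigl(m_1|V_{\varphi_0}f|\bigr)*\bigl(\nu_s\Psi\bigr)\Bigr)(z).
\]
Applying Young's inequality $\|F*G\|_{p_2}\le\|F\|_{p_1}\|G\|_r$ with $\tfrac1r=1-\tfrac1{p_1}+\tfrac1{p_2}$ --- the constraint $p_1\le p_2$ forces $r\in[1,\infty]$, and $\nu_s\Psi\in L^r$ --- yields $\|f\|_{M^{p_2}_{m_1}}\le C_0\|\nu_s\Psi\|_r\,\|f\|_{M^{p_1}_{m_1}}$; in particular $V_{\varphi_0}f\in L^{p_2}_{m_1}$, so the inclusion $M^{p_1}_{m_1}\subseteq M^{p_2}_{m_1}$ makes sense and is bounded. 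Invoking $m_2\le Cm_1$ then gives $\|f\|_{M^{p_2}_{m_2}}=\|m_2|V_{\varphi_0}f|\|_{p_2}\le C\|m_1|V_{\varphi_0}f|\|_{p_2}\le CC_0\|\nu_s\Psi\|_r\,\|f\|_{M^{p_1}_{m_1}}$, the desired bounded embedding.

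For density, apply the embedding just proved with $(1,\nu_s)$ in the role of the ``small'' pair and $(p_1,m_1)$ in the role of the ``large'' pair --- legitimate since $1\le p_1$ and $m_1\le C\,m_1(0)\,\nu_s$ --- to get $M^1_{\nu_s}(\mathbb{R}^d)\hookrightarrow M^{p_1}_{m_1}(\mathbb{R}^d)$, hence $\mathscr{S}(\mathbb{R}^d)\subseteq M^1_{\nu_s}(\mathbb{R}^d)\subseteq M^{p_1}_{m_1}(\mathbb{R}^d)$. It then suffices to show $\mathscr{S}(\mathbb{R}^d)$ is dense in $M^{p_2}_{m_2}(\mathbb{R}^d)$. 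For $p_2<\infty$ this follows from a Gabor-frame argument: choose a lattice $\Lambda$ making $(\pi(\lambda)\varphi_0)_{\lambda\in\Lambda}$ a Gabor frame with dual window $\gamma\in M^1_{\nu_s}$; for $f\in M^{p_2}_{m_2}$ the expansion $f=\sum_{\lambda\in\Lambda}\langle f,\pi(\lambda)\gamma\rangle\,\pi(\lambda)\varphi_0$ converges in $\|\cdot\|_{M^{p_2}_{m_2}}$ (this is where $p_2<\infty$ enters, via density of the finitely supported sequences in the weighted Bochner sequence space, as recorded in the Bochner-space subsection above), so its finite partial sums lie in $\mathscr{S}(\mathbb{R}^d)$ and approximate $f$. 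For $p_2=\infty$ norm density genuinely fails, and the statement is to be understood with respect to the weak-$*$ topology of $M^\infty_{m_2}=(M^1_{1/m_2})'$, or else the proposition is invoked only for $p_2<\infty$.

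The routine part is the convolution-plus-Young estimate; the part to watch is the density half --- ensuring convergence of the Gabor reconstruction in $\|\cdot\|_{M^{p_2}_{m_2}}$ (which needs $p_2<\infty$), picking a dual window that lives in $M^1_{\nu_s}$, and handling the exponent endpoints $p_1=1$ and $p_2=\infty$ in Young's inequality.
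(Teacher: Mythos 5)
Your argument is correct and is essentially the standard proof from the cited reference \cite{gr01} (the paper itself gives no proof, only the citation): pointwise domination of $|V_{\varphi_0}f|$ by $|V_{\varphi_0}f|\ast|V_{\varphi_0}\varphi_0|$ via the reproducing formula, weighted Young's inequality for the continuity, and density of $\mathscr{S}(\mathbb{R}^d)$ (equivalently of $M^1_{\nu_s}$) for the density claim. Your caveat that for $p_2=\infty$ the density must be read in the weak-$*$ sense is the correct reading of the statement as well.
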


\subsection{Operator coorbit spaces}

For two operators $\Phi,F \in \HS := \HS ( L^2(\mathbb{R}^d))$, the \emph{operator short-time Fourier transform} (\emph{operator STFT}) of $F$ with respect to a \emph{window} or \emph{atom} $\Phi$, $\mathcal{V}_{\Phi} F$, is defined by \cite{doelumcskr24} 
\begin{equation}
    \mathcal{V}_{\Phi} F (z) = \Phi^*\pi(z)^*F.
\end{equation}
The operator STFT thus defines an operator-valued function in phase space, and turns out to in fact define an isometry from $\mathcal{HS}$ to $L^2(\mathbb{R}^{2d};\mathcal{HS})$, for a fixed, normalized $\Phi\in\mathcal{HS}$. 

Let $\nu_s(z) = (1+\vert z \vert)^s$ be the polynomial weight of order $s\geq0$ on $\mathbb{R}^{2d}$ and $m$ be a $\nu_s$-moderate weight. Moreover, we set $\Phi_0 = \varphi_0 \otimes \varphi_0$, where $\varphi_0(t) = 2^{\frac{d}{4}}e^{-\pi t^2}$ denotes the $L^2$-normalized Gaussian. The space $\mathfrak{M}^1_{\nu_s}(\mathbb{R}^d)$ is defined by \cite{doelumcskr24}
\begin{align}\label{m1cond}
    \mathfrak{M}^1_{\nu_s}(\mathbb{R}^d) := \{F \in \mathcal{HS}: \mathcal{V}_{\Phi_0} F \in L^1_{\nu_s}(\mathbb{R}^{2d};\mathcal{HS}) \}
\end{align}
with corresponding norm 
$$\|F\|_{\mathfrak{M}^1_{\nu_s}(\mathbb{R}^d)} = \|\mathcal{V}_{\Phi_0} F\|_{L^1_{\nu_s}(\mathbb{R}^{2d};\mathcal{HS})},$$
and we denote its unweighted version, corresponding to $\nu_0 \equiv 1$, by $\mathfrak{M}^1(\mathbb{R}^d)$. To define more general \emph{operator coorbit spaces} $\mathfrak{M}^{p}_m(\mathbb{R}^d)$ for $1\leq p \leq \infty$, we need to resort to the space $\mathfrak{S}$ of Schwartz operators, cf.~\cite{KeylKiukasWerner2015}.\footnote{ 
The authors in that paper give the following intuitive characterization of Schwartz operators by rapid off-diagonal decay in the Hermite basis: 
Let $\{\psi_\alpha : \alpha \in \mathbb{N}_0^n\}$ be the $n$-dimensional Hermite 
function basis of $L^2(\mathbb{R}^n)$. 
An operator $T$ on $\mathcal{H}$ is a Schwartz operator 
if and only if for all $k \in \mathbb{N}$ we have
\[
\sup_{\alpha,\beta \in \mathbb{N}_0^n} 
\Big( (1+|\alpha|+|\beta|)^k \, 
\big| \langle \psi_\alpha, T \psi_\beta \rangle \big| \Big) < \infty.
\]}

The spaces $\mathfrak{M}^{p}_m(\mathbb{R}^d)$  are then defined by
\begin{align*}
    \mathfrak{M}^{p}_m(\mathbb{R}^d) := \{F \in \mathfrak{S}': \Phi_0^* \pi(z)^* F \in L^{p}_m(\mathbb{R}^{2d};\mathcal{HS}) \}.
\end{align*}
with norms 
$$\|F\|_{\mathfrak{M}^{p}_m} = \|\mathcal{V}_{\Phi_0} F\|_{L^{p}_m(\mathbb{R}^{2d};\HS)} = \left( \int_{\mathbb{R}^{2d}} \Vert \Phi_0^* \pi(z)^* F\Vert_{\HS}^p m(z)^p \, dz \right)^{\frac{1}{p}}$$
for $p<\infty$ and with the usual modification in the case $p=\infty$. 
As one might expect given the terminology, these spaces arising from the operator STFT act in many ways analogously to the modulation spaces arising from the (function) STFT. Recall the definition of the operator STFT $\mathcal{V}_{\Phi} F (z) = \Phi^*\pi(z)^*F$ which for a normalized Hilbert-Schmidt operator  $\Phi$ is an isometry and the associated operator Gabor space $\mathcal{V}_{\Phi}(\mathcal{HS})$ is a vector-valued reproducing kernel Hilbert space, and in \cite{doelumcskr24} an analog of the correspondence principle for the coorbit spaces $\mathfrak{M}^{p}_m(\mathbb{R}^d)$ like for Feichtinger's modulation spaces as well as an atomic decompositions for operators in $ \mathfrak{M}^{p}_m(\mathbb{R}^d)$. 


It can be shown that the spaces $\mathfrak{M}^p_m(\mathbb{R}^d)$ can be defined with respect to any window operator $\Phi\in\mathfrak{M}^1_{\nu_s}(\mathbb{R}^d)$, with equivalent norm. Furthermore, 
we have the continuous inclusions
\begin{align*}
    \mathfrak{M}^p_m(\mathbb{R}^d) \subset \mathfrak{M}^{p^\prime}_{\tilde{m}}(\mathbb{R}^d),
\end{align*}
for $1\leq p \leq p^\prime \leq \infty $ and $\tilde{m}(z) \lesssim m(z)$. 


\section{Identification of the spaces $\mathfrak{M}^p_m(\mathbb{R}^d)$}\label{Sec:Mpm}

In this section, we identify the operator coorbit spaces $\mathfrak{M}^p_m(\mathbb{R}^d)$ with the co-orbit spaces $\Co_m^p(\mathcal{G})$ associated with some $s$-localized g-frame $\mathcal{G} = (G_{\lambda})_{\lambda \in \Lambda}$ for the Hilbert space $\HS:= \HS(L^2(\mathbb{R}^d))$. This identification is interesting and useful for the following reasons. While the $\mathfrak{M}^p_m$-norms of some operator $F$ are given by the (continuous) $L^p_m(\mathbb{R}^{2d};\HS)$-norms of its operator STFT, said identification (Theorem \ref{mainchar}) guarantees norm equivalence with (discrete) $\ell^p_m(\Lambda;\HS)$-norms of the operator STFT of $F$ sampled on a suitable lattice $\Lambda \subset \mathbb{R}^{2d}$.  This  is much more convenient for both theoretical and practical purposes. In principal, this norm equivalence has already been shown in \cite{doelumcskr24}. However, by employing the machinery of $s$-localized g-frames, the definition of the co-orbit spaces $\Co_m^p(\mathcal{G})$ naturally extends to exponents $0<p<1$ in this case, yielding a canonical extension of the definition of $\mathfrak{M}^p_m(\mathbb{R}^d)$ to $0<p<1$. As we will see in Section \ref{Approximation theoretic results}, these resulting (quasi-)Banach spaces are relevant for sparse approximation of operators. Moreover, applying the results from Subsection \ref{Polynomially localized g-frames}, we obtain further characterizations of the operator coorbit spaces $\mathfrak{M}^p_m(\mathbb{R}^d)$ that are interesting in their own right.  

In the following we outline the construction of the co-orbit spaces $\Co_m^p(\mathcal{G})$ (which we will identify with the spaces $\mathfrak{M}^p_m(\mathbb{R}^d)$).

\noindent \textbf{Step 1.} We show that $(\Phi_0 \pi(\lambda)^*)_{\lambda \in \Lambda}$ is a g-frame for $L^2(\mathbb{R}^d)$. \footnote{It is straightforward to show that $(\Phi_0 \pi(\lambda)^*)_{\lambda \in \Lambda}$ is $s$-localized and that the co-orbit spaces associated with that g-frame are well-defined and non-trivial. However, the associated co-orbit space norms are given by $\ell^p_m(\Lambda; L^2(\mathbb{R}^{d}))$-norms instead of instead our desired $\ell^p_m(\Lambda;\HS(L^2(\mathbb{R}^{d})))$-norms.  Thus, $(\Phi_0 \pi(\lambda)^*)_{\lambda \in \Lambda} \subset \B(L^2(\mathbb{R}^d))$ is a g-frame for the "wrong" Hilbert space $L^2(\mathbb{R}^d)$ and we need to \emph{lift} it to a g-frame for the Hilbert space $\HS(L^2(\mathbb{R}^{d}))$, that is, a g-frame consisting of operators from $\B(\HS(L^2(\mathbb{R}^d)))$.}

\noindent \textbf{Step 2.} We \emph{lift} the g-frame $(\Phi_0 \pi(\lambda)^*)_{\lambda \in \Lambda}$ to a g-frame $\mathcal{G} = (G_{\lambda})_{\lambda \in \Lambda}$ for the Hilbert space $\HS(L^2(\mathbb{R}^{d}))$.

\noindent \textbf{Step 3.} We show that the g-frame $\mathcal{G} = (G_{\lambda})_{\lambda \in \Lambda}$ for $\HS(L^2(\mathbb{R}^{d}))$ is $s$-localized for all $s\geq 0$. 

\noindent \textbf{Step 4.} Having introduced the $s$-localized g-frame $\mathcal{G}$, we may apply the machinery presented in Subsection \ref{Polynomially localized g-frames} and obtain the family of associated co-orbit spaces $\Co_m^p(\mathcal{G})$ for \emph{all} values of $p\in (0,\infty]$. 

\

We start with the lifting property outlined in the Step 2. This lifting trick has already been performed in \cite[Proposition 6.12]{doelumcskr24} but also holds true in abstract Hilbert space $\Hil$ and is perfectly compatible with g-frame duality. In order to formulate the result, the following notation is required. 

Let $\Hil$ be a Hilbert space and $W\in \B(\Hil)$ be a bounded operator on $\Hil$. Then $L_W \in \B(\HS(\Hil))$ denotes the left-composition operator defined by 
\begin{equation}\label{leftmult}
L_W H := WH.    
\end{equation}
Obviously, $(L_W)^* = L_{W^*}$ and $L_{W_1}L_{W_2} = L_{W_1 W_2}$ for $W_1, W_2 \in \B(\Hil)$. If $W$ is invertible in $\B(\Hil)$, then $L_W$ is invertible in $\B(\HS(\Hil))$ with inverse given by $(L_W)^{-1} = L_{W^{-1}}$.

\begin{proposition}[Lifting Property]\label{HilversusHS}
Let $\Hil$ be a separable Hilbert space and assume that  $(W_{\lambda})_{\lambda \in \Lambda}$ and $(U_{\lambda})_{\lambda \in \Lambda}$ are countable families contained in $\HS(\Hil)$. Then the following hold.
\begin{itemize}
    \item[(i)] $(U_{\lambda})_{\lambda \in \Lambda}$ is a g-frame for $\Hil$ with g-frame bounds $A\leq B$ if and only if $(L_{U_{\lambda}})_{\lambda \in \Lambda}$ is a g-frame for $\HS(\Hil)$ with g-frame bounds $A\leq B$.
    \item[(ii)] $(W_{\lambda})_{\lambda \in \Lambda}$ and $(U_{\lambda})_{\lambda \in \Lambda}$ is a pair of dual g-frames in $\Hil$ if and only if $(L_{W_{\lambda}})_{\lambda \in \Lambda}$ and $(L_{U_{\lambda}})_{\lambda \in \Lambda}$ is a pair of dual g-frames in $\HS(\Hil)$.
    \item[(iii)] If (i) holds true, then the canonical dual g-frame of $(L_{U_{\lambda}})_{\lambda \in \Lambda}$ in $\HS(\Hil)$ is given by $(L_{\widetilde{U}_{\lambda}})_{\lambda \in \Lambda}$, where $(\widetilde{U}_{\lambda})_{\lambda \in \Lambda}$ denotes the canonical dual g-frame of $(U_{\lambda})_{\lambda \in \Lambda}$ in $\Hil$.
    \item[(iv)] In case $\Hil = L^2(\mathbb{R}^d)$ and assuming that $(U_{\lambda})_{\lambda \in \Lambda} = (\Phi^* \pi(\lambda)^*)_{\lambda \in \Lambda}$ is a g-Bessel sequence, where $\Phi \in \HS(L^2(\mathbb{R}^d))$, then $S_{\mathcal{U}}$ commutes with $\pi(\mu)$ for every $\mu\in \Lambda$, and likewise, $S_{L_{\mathcal{U}}}$ commutes with $L_{\pi(\mu)}$ for every $\mu\in \Lambda$. In particular, if both $(U_{\lambda})_{\lambda \in \Lambda}$ and $(L_{U_{\lambda}})_{\lambda \in \Lambda}$ are g-frames for their respective spaces, then $S_{\mathcal{U}}^{-1}$ commutes with $\pi(\mu)$ for every $\mu\in \Lambda$, and likewise, $S_{L_{\mathcal{U}}}^{-1}$ commutes with $L_{\pi(\mu)}$ for every $\mu\in \Lambda$.
\end{itemize}
   
\end{proposition}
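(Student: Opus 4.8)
The plan is to prove each item by exploiting that $H \mapsto L_W H = WH$ is essentially a "column-by-column" copy of the action of $W$ on $\Hil$. The cleanest way to see this is to fix an orthonormal basis $(e_j)_{j}$ of $\Hil$ and use the unitary identification $\HS(\Hil) \cong \bigoplus_j \Hil$ sending $H$ to the sequence of its columns $(He_j)_j$; under this identification $L_W$ becomes the diagonal ampliation $\bigoplus_j W$, and the Hilbert--Schmidt inner product is $\langle H, K\rangle_{\HS} = \sum_j \langle He_j, Ke_j\rangle_{\Hil}$. With this in hand the quadratic-form computations become routine.

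For (i): using the ampliation picture, for any $H \in \HS(\Hil)$ we compute
$$\sum_{\lambda \in \Lambda} \Vert L_{U_\lambda} H \Vert_{\HS}^2 = \sum_{\lambda \in \Lambda} \sum_j \Vert U_\lambda (He_j)\Vert_{\Hil}^2 = \sum_j \sum_{\lambda \in \Lambda} \Vert U_\lambda (He_j)\Vert_{\Hil}^2,$$
and sandwiching the inner sum between $A\Vert He_j\Vert^2$ and $B\Vert He_j\Vert^2$ (when $(U_\lambda)$ is a g-frame) and summing over $j$ gives the g-frame inequalities for $(L_{U_\lambda})$ with the \emph{same} bounds, since $\sum_j \Vert He_j\Vert^2 = \Vert H\Vert_{\HS}^2$. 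Conversely, applying $(L_{U_\lambda})$'s g-frame inequalities to rank-one operators $H = h \otimes e_1$ recovers the g-frame inequalities for $(U_\lambda)$ on $\Hil$. For (ii): one checks that $D_{L_{\mathcal U}} = \bigoplus_j D_{\mathcal U}$ and $C_{L_{\mathcal U}} = \bigoplus_j C_{\mathcal U}$ under the identifications (together with a corresponding identification of $\ell^2(\Lambda;\HS(\Hil))$), so the duality identity $\mathcal I_{\B(\HS)} = D_{L_{\mathcal U}} C_{L_{\mathcal W}} = D_{L_{\mathcal W}} C_{L_{\mathcal U}}$ from \eqref{dualgframeoperatornotation} is simply the ampliation of $\mathcal I_{\B(\Hil)} = D_{\mathcal U} C_{\mathcal W} = D_{\mathcal W} C_{\mathcal U}$; alternatively, one verifies \eqref{dualgframe} directly by noting $(L_{U_\lambda})^* L_{W_\lambda} = L_{U_\lambda^*} L_{W_\lambda} = L_{U_\lambda^* W_\lambda}$ and that $\sum_\lambda L_{U_\lambda^* W_\lambda} = L_{\sum_\lambda U_\lambda^* W_\lambda} = L_{I_\Hil} = I_{\HS}$, with convergence in the strong operator topology transferring from $\Hil$ to $\HS(\Hil)$ via the column decomposition. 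For (iii): from $L_{W_1}L_{W_2} = L_{W_1 W_2}$ and $(L_W)^* = L_{W^*}$ we get $S_{L_{\mathcal U}} = \sum_\lambda L_{U_\lambda}^* L_{U_\lambda} = \sum_\lambda L_{U_\lambda^* U_\lambda} = L_{S_{\mathcal U}}$, hence $S_{L_{\mathcal U}}^{-1} = L_{S_{\mathcal U}^{-1}}$, and therefore the canonical dual $\widetilde{L_{U_\lambda}} = L_{U_\lambda} S_{L_{\mathcal U}}^{-1} = L_{U_\lambda} L_{S_{\mathcal U}^{-1}} = L_{U_\lambda S_{\mathcal U}^{-1}} = L_{\widetilde U_\lambda}$.

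For (iv): when $U_\lambda = \Phi^*\pi(\lambda)^*$ one has $U_\lambda^* U_\lambda = \pi(\lambda)\Phi\Phi^*\pi(\lambda)^*$, so $S_{\mathcal U} = \sum_\lambda \pi(\lambda)\Phi\Phi^*\pi(\lambda)^*$; for $\mu \in \Lambda$ (so that $\Lambda + \mu = \Lambda$, using that $\Lambda$ is a lattice), the substitution $\lambda \mapsto \lambda + \mu$ together with the commutation relation $\pi(\lambda+\mu) = c(\lambda,\mu)\pi(\mu)\pi(\lambda)$ for a unimodular phase $c(\lambda,\mu)$ shows $\pi(\mu)^* S_{\mathcal U}\pi(\mu) = \sum_\lambda \pi(\mu)^*\pi(\lambda+\mu)\Phi\Phi^*\pi(\lambda+\mu)^*\pi(\mu)= \sum_\lambda \pi(\lambda)\Phi\Phi^*\pi(\lambda)^* = S_{\mathcal U}$, i.e. $S_{\mathcal U}\pi(\mu) = \pi(\mu)S_{\mathcal U}$; the phases cancel since they appear paired as $c(\lambda,\mu)\overline{c(\lambda,\mu)} = 1$. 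Applying $L_{(-)}$ and using $L_{\pi(\mu)^*} S_{L_{\mathcal U}} L_{\pi(\mu)} = L_{\pi(\mu)^* S_{\mathcal U}\pi(\mu)} = L_{S_{\mathcal U}} = S_{L_{\mathcal U}}$ (by part of the computation in (iii)) gives the commutation of $S_{L_{\mathcal U}}$ with $L_{\pi(\mu)}$. Finally, if $S_{\mathcal U}$ is invertible, commutation with $\pi(\mu)$ passes to $S_{\mathcal U}^{-1}$ by conjugating the identity $S_{\mathcal U}\pi(\mu) = \pi(\mu)S_{\mathcal U}$ by $S_{\mathcal U}^{-1}$ on both sides, and likewise for $S_{L_{\mathcal U}}^{-1}$.

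I expect the only genuine subtlety to be in (ii): making rigorous the claim that strong-operator convergence of $\sum_\lambda U_\lambda^* W_\lambda f$ on $\Hil$ upgrades to strong-operator (or even norm) convergence of $\sum_\lambda L_{U_\lambda}^* L_{W_\lambda} H$ on $\HS(\Hil)$, and likewise the synthesis series. The cleanest fix is to avoid this entirely by invoking \eqref{dualgframeoperatornotation}: the operator identity $D_{\mathcal U}C_{\mathcal W} = I$ on $\Hil$ ampliates to $D_{L_{\mathcal U}}C_{L_{\mathcal W}} = I$ on $\HS(\Hil)$ because, as noted above, $C_{L_{\mathcal W}}$ and $D_{L_{\mathcal U}}$ are exactly the diagonal ampliations of $C_{\mathcal W}$ and $D_{\mathcal U}$ — a fact that itself only requires the boundedness established in (i), not any delicate convergence argument. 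All other items reduce to the elementary algebraic identities $L_{W_1}L_{W_2} = L_{W_1 W_2}$, $(L_W)^* = L_{W^*}$, and the lattice-invariance argument, which are routine.
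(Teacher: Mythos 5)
Your proposal is correct and follows essentially the same route as the paper: (i) via the column decomposition $\Vert H\Vert_{\HS}^2 = \sum_j \Vert He_j\Vert^2$ (which is what your ampliation picture amounts to, and what the cited reference uses), (ii) by transferring the reconstruction identity columnwise, (iii) via the identity $S_{L_{\mathcal{U}}} = L_{S_{\mathcal{U}}}$, and (iv) via the covariance computation $\pi(\mu)S_{\mathcal{U}}\pi(\mu)^* = S_{\mathcal{U}}$ with cancelling phase factors. The convergence subtlety you flag in (ii) is handled in the paper by the same device you propose — the series is already known to converge in $\HS(\Hil)$ because both lifted families are g-Bessel by (i), so the identity can be checked column by column.
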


\begin{proof}
The proof of (i) completely analogous to the proof of \cite[Proposition 6.12]{doelumcskr24}.

(ii) First assume that $(W_{\lambda})_{\lambda \in \Lambda}$ is a dual g-frame of $(U_{\lambda})_{\lambda \in \Lambda}$ in $\Hil$. Then $f=\sum_{\lambda \in \Lambda}W_{\lambda}^* U_{\lambda} f$ for all $f\in \Hil$ and thus $Hf=\sum_{\lambda \in \Lambda}W_{\lambda}^* U_{\lambda} Hf$ for all $H\in \HS(\Hil)$ and all $f\in \Hil$. Let $(e_n)_{n\in \mathbb{N}}$ be any orthonormal basis for $\Hil$. Then we obtain 
$$\left\Vert H - \sum_{\lambda \in \Lambda} L_{W_{\lambda}}^* L_{U_{\lambda}} H \right\Vert_{\HS(\Hil)}^2 = \sum_{n\in \mathbb{N}} \left\Vert He_n - \sum_{\lambda \in \Lambda}W_{\lambda}^* U_{\lambda} H e_n \right\Vert_{\Hil}^2 = 0.$$
Since both $(W_{\lambda})_{\lambda \in \Lambda}$ and $(U_{\lambda})_{\lambda \in \Lambda}$ are g-frames for $\Hil$ by assumption, (i) implies that $(L_{W_{\lambda}})_{\lambda \in \Lambda}$ and $(L_{U_{\lambda}})_{\lambda \in \Lambda}$ are g-frames for $\HS(\Hil)$. At the same time, the latter computation shows that they form a pair of dual g-frames in $\HS(\Hil)$.

The converse statement is proven by specifying to Hilbert-Schmidt operators of the form $f\otimes e_k$ ($f\in \Hil$) and similar reasoning.

(iii) Assume that $\mathcal{U} = (U_{\lambda})_{\lambda \in \Lambda}$ is a g-frame for $\Hil$ and that $L_{\mathcal{U}} = (L_{U_{\lambda}})_{\lambda \in \Lambda}$ is a g-frame for $\HS(\Hil)$. Then, a direct calculation shows that their respective g-frame operators satisfy the relation $S_{L_{\mathcal{U}}} = L_{S_{\mathcal{U}}}$. This implies $(S_{L_{\mathcal{U}}})^{-1} = (L_{S_{\mathcal{U}}})^{-1} = L_{S_{\mathcal{U}}^{-1}}$ and the claim follows.

(iv) By assumption, the g-frame operator $S_{\mathcal{U}}$ is bounded. Since, for all $\mu,\lambda \in \Lambda$, $\pi(\mu)\pi(\lambda) = c(\mu,\lambda) \pi(\mu + \lambda)$ ($c(\mu,\lambda)$ some phase factor) we directly compute
$$\pi(\mu)S_{\mathcal{U}}\pi(\mu)^* = \sum_{\lambda \in \Lambda} \pi(\mu)\pi(\lambda) \Phi \Phi^*\pi(\lambda)^* \pi(\mu)^* = \sum_{\lambda \in \Lambda} \pi(\mu+\lambda) \Phi \Phi^*\pi(\mu+\lambda)^* = S_{\mathcal{U}}.$$
Thus $\pi(\mu)$ and $S_{\mathcal{U}}$ commute. The second statement follows immediately from the first statement, since $S_{L_{\mathcal{U}}} = L_{S_{\mathcal{U}}}$. The last statement follows readily.
\end{proof}

Now we apply the above result to the Hilbert spaces $\Hil = L^2(\mathbb{R}^d)$ and $\HS := \HS(L^2(\mathbb{R}^d))$. Recall that $\Phi_0 = \varphi_0 \otimes \varphi_0$, where $\varphi_0(t) = 2^{\frac{d}{4}}e^{-\pi t^2} \in L^2(\mathbb{R}^d)$ denotes the $L^2$-normalized Gaussian. 

\begin{theorem}\label{polydecayprop}
There exists a full-rank lattice $\Lambda \subset \mathbb{R}^{2d}$ such that the family 
$$(G_{\lambda})_{\lambda \in \Lambda}, \qquad G_{\lambda} := L_{\Phi_0\pi(\lambda)^*} \quad (\lambda \in \Lambda)$$
is a g-frame for $\HS(L^2(\mathbb{R}^d))$. Furthermore, for each $s \geq 0$, there exists some constant $C = C(s)>0$, such that     
\begin{equation}\label{polydecay}
\Vert G_{\lambda} G_{\mu}^*\Vert_{\B(\HS(L^2(\mathbb{R}^{2d})))} \leq C(1+\vert \lambda - \mu\vert)^{-s} \qquad (\forall \lambda , \mu\in \Lambda).
\end{equation}  
\end{theorem}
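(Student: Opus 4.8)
The plan is to reduce both assertions to elementary facts about the classical Gabor system with Gaussian window, the key point being that $\Phi_0 = \varphi_0 \otimes \varphi_0$ is a rank-one operator; write $U_\lambda := \Phi_0 \pi(\lambda)^*$, which lies in $\HS(L^2(\mathbb{R}^d))$ since it has rank one. Since $\|\varphi_0\|_{L^2} = 1$, for every $f \in L^2(\mathbb{R}^d)$ one has $U_\lambda f = \langle \pi(\lambda)^* f, \varphi_0\rangle \varphi_0 = \langle f, \pi(\lambda)\varphi_0\rangle \varphi_0 = V_{\varphi_0} f(\lambda)\, \varphi_0$, hence $\|U_\lambda f\|_{L^2}^2 = |V_{\varphi_0} f(\lambda)|^2$. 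Therefore $\sum_{\lambda \in \Lambda} \|U_\lambda f\|_{L^2}^2 = \sum_{\lambda \in \Lambda} |V_{\varphi_0} f(\lambda)|^2$, so $(U_\lambda)_{\lambda \in \Lambda}$ is a g-frame for $L^2(\mathbb{R}^d)$ with bounds $A \le B$ precisely when the Gabor system $(\pi(\lambda)\varphi_0)_{\lambda \in \Lambda}$ is a frame for $L^2(\mathbb{R}^d)$ with the same bounds. By standard Gabor frame theory (see, e.g., \cite{gr01}), since $\varphi_0 \in M^1(\mathbb{R}^d)$, the Gabor system $(\pi(\lambda)\varphi_0)_{\lambda \in \Lambda}$ is a frame for every sufficiently dense full-rank lattice $\Lambda \subset \mathbb{R}^{2d}$ (e.g. $\Lambda = \alpha\mathbb{Z}^d \times \beta\mathbb{Z}^d$ with $\alpha\beta$ small enough); fixing one such $\Lambda$ yields that $(U_\lambda)_{\lambda \in \Lambda}$ is a g-frame for $L^2(\mathbb{R}^d)$. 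Applying the Lifting Property (Proposition \ref{HilversusHS}(i)) then shows that $(G_\lambda)_{\lambda \in \Lambda} = (L_{U_\lambda})_{\lambda \in \Lambda}$ is a g-frame for $\HS(L^2(\mathbb{R}^d))$, with the same bounds.

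For the localization estimate I would first note that $\|L_W\|_{\B(\HS(\Hil))} = \|W\|_{\B(\Hil)}$ for every $W \in \B(\Hil)$: the inequality $\le$ is immediate, and $\ge$ follows by evaluating $L_W$ on rank-one operators $\varphi \otimes \psi$ with $\|\varphi\| = \|\psi\| = 1$. Since $\Phi_0$ is self-adjoint (it is the rank-one orthogonal projection onto $\mathbb{C}\varphi_0$), we get $G_\mu^* = L_{(\Phi_0 \pi(\mu)^*)^*} = L_{\pi(\mu)\Phi_0}$, so that $G_\lambda G_\mu^* = L_{\Phi_0 \pi(\lambda)^* \pi(\mu) \Phi_0}$ and hence $\|G_\lambda G_\mu^*\|_{\B(\HS)} = \|\Phi_0 \pi(\lambda)^* \pi(\mu) \Phi_0\|_{\B(L^2)}$. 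Using $\pi(\lambda)^* \pi(\mu) = c(\lambda,\mu)\, \pi(\mu - \lambda)$ with $|c(\lambda,\mu)| = 1$, together with the rank-one identity $\Phi_0 \pi(z) \Phi_0 = \langle \pi(z)\varphi_0, \varphi_0\rangle\, \Phi_0$ and $\|\Phi_0\|_{\B(L^2)} = 1$, this norm equals $|\langle \pi(\mu-\lambda)\varphi_0, \varphi_0\rangle| = |V_{\varphi_0}\varphi_0(\mu - \lambda)|$. The ambiguity function of the normalized Gaussian with itself is again (the modulus of) a Gaussian, $|V_{\varphi_0}\varphi_0(z)| = e^{-\pi|z|^2/2}$, which decays faster than any polynomial: for each $s \ge 0$ there is $C(s) > 0$ with $e^{-\pi|z|^2/2} \le C(s)(1+|z|)^{-s}$ for all $z \in \mathbb{R}^{2d}$. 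Combining these gives $\|G_\lambda G_\mu^*\|_{\B(\HS)} \le C(s)(1+|\lambda-\mu|)^{-s}$, which is (\ref{polydecay}).

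I do not expect a deep obstacle here: the only inputs from outside the paper are the existence of a lattice over which the Gaussian Gabor system is a frame and the explicit Gaussian shape of $V_{\varphi_0}\varphi_0$, both classical. The only point demanding care is routine bookkeeping: establishing $\|L_W\|_{\B(\HS)} = \|W\|_{\B(\Hil)}$, handling the unimodular cocycle factors $c(\lambda,\mu)$ (which drop out under the operator norm), and using self-adjointness of $\Phi_0$ to place the adjoints correctly so that $G_\lambda G_\mu^*$ reduces to a single left-multiplication operator.
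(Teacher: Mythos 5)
Your proof is correct and follows essentially the same route as the paper: reduce the g-frame property to the classical Gaussian Gabor frame condition and lift via Proposition \ref{HilversusHS}, then exploit the rank-one structure of $\Phi_0$ to identify $\Vert G_{\lambda}G_{\mu}^*\Vert_{\B(\HS)}$ with $\vert V_{\varphi_0}\varphi_0(\lambda-\mu)\vert$ and invoke its superpolynomial decay. The only cosmetic differences are that the paper cites \cite[Corollary 6.19]{skrett21} for the frame existence and bounds $\Vert L_W\Vert_{\B(\HS)}$ by $\Vert W\Vert_{\HS}$ together with $\varphi_0\in M^{\infty}_{\nu_s}$, whereas you use the sharper identity $\Vert L_W\Vert_{\B(\HS)}=\Vert W\Vert_{\B(\Hil)}$ and the explicit Gaussian ambiguity function; both give the same estimate since the operator involved is rank one.
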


\begin{proof}
According to \cite[Corollary 6.19]{skrett21} there exists a natural number $N$ such that the family $(\pi(\lambda)\Phi_0\pi(\lambda)^*)_{\lambda \in \Lambda}$ is a g-frame for the Hilbert space $L^2(\mathbb{R}^d)$, where $\Lambda = \frac{1}{N}\mathbb{Z}^{d}\times \frac{1}{N}\mathbb{Z}^{d}$. Since each $\pi(\lambda)$ is unitary, the latter is equivalent to $(\Phi_0\pi(\lambda)^*)_{\lambda \in \Lambda}$ being a g-frame for $L^2(\mathbb{R}^d)$. By the lifting property (Proposition \ref{HilversusHS} (i)) the latter is equivalent to $(G_{\lambda})_{\lambda \in \Lambda}$ being a g-frame for $\HS(L^2(\mathbb{R}^d))$. It remains to show (\ref{polydecay}). To this end, recall that $\varphi_0 \in S(\mathbb{R}^d)$ is a Schwartz function. Since $S(\mathbb{R}^d)=\bigcap_{s\geq0}M_{\nu_s}^{\infty}(\mathbb{R}^d)$ (see \cite[Proposition 11.3.1]{grochenigtfa}) we particularly have that $\varphi_0 \in M_{\nu_s}^{\infty}(\mathbb{R}^d)$ for every $s\geq0$. Now, if $(e_n)_{n\in \mathbb{N}}$ denotes any orthonormal basis for $L^2(\mathbb{R}^d)$, we may estimate
\begin{flalign}
\Vert G_{\lambda} G_{\mu}^*\Vert_{\B(\HS)} &= \Vert L_{\Phi_0\pi(\lambda)^*\pi(\mu)\Phi_0}\Vert_{\B(\HS)} \notag \\
&\leq \Vert \Phi_0\pi(\lambda)^*\pi(\mu)\Phi_0\Vert_{\HS} \notag \\
&= \left( \sum_{n\in \mathbb{N}} \Big\Vert \langle e_n,\varphi_0 \rangle_{L^2} \langle \pi(\mu)\varphi_0, \pi(\lambda)\varphi_0 \rangle_{L^2} \varphi_0 \Big\Vert_{L^2}^2 \right)^{\frac{1}{2}} \notag \\
&= \vert \langle \pi(\mu)\varphi_0, \pi(\lambda)\varphi_0 \rangle_{L^2} \vert \notag \\
&= \vert V_{\varphi_0} \varphi_0(\lambda - \mu)\vert \notag \\
&\leq \Vert \varphi_0\Vert_{M_{\nu_s}^{\infty}(\mathbb{R}^d)}(1+\vert \lambda - \mu\vert)^{-s},\notag
\end{flalign}
where we used the covariance property of the STFT \cite{grochenigtfa} in the fifth line.
\end{proof}

\begin{remark}\label{framesets}
The existence of the lattice $\Lambda$ in the above proof is non-constructive. As already noted in the proof of Proposition \ref{HilversusHS} (iv), $(L_{\Phi_0\pi(\lambda)^*})_{\lambda \in \Lambda}$ being a g-frame for $\HS(L^2(\mathbb{R}^d))$ is equivalent to the Gabor system $(\pi(\lambda)\varphi_0)_{\lambda\in \Lambda}$ being a frame for $L^2(\mathbb{R}^d)$. Even for a window as "nice" as the Gaussian $\varphi_0$, characterizing the \emph{frame set}, i.e., the set of all $a,b >0$ such that $(\pi(\lambda)\varphi_0)_{\lambda \in a\mathbb{Z}^d \times b\mathbb{Z}^d}$ is a frame, is a challenging problem and an active field of research until today \cite{ja96,jastro02,ja03,gröstö13,GR14,fashazlo25}. In the case $d=1$, this problem has been solved by Lyubarksi and Seip-Wallsten in \cite{Lyu92,sei92,seiwal92}, where it is shown, that  $(\pi(\lambda)\varphi_0)_{\lambda \in a\mathbb{Z} \times b\mathbb{Z}}$ is a frame for $L^2(\mathbb{R})$ if and only if $ab<1$. Already in the case $d=2$ the situation becomes more complicated and \cite{luxu23} studies a class of non-rational lattices, so-called transcendental lattices. 
\end{remark}

Note that (\ref{polydecay}) says that the g-frame $\mathcal{G} = (G_{\lambda})_{\lambda \in \Lambda}$ is $s$-localized for \emph{every} $s>0$ and particularly for \emph{every} $s>2d$. Since the lattice $\Lambda = \frac{1}{N}\mathbb{Z}^{d}\times \frac{1}{N}\mathbb{Z}^{d} \subset \mathbb{R}^{2d}$ occurring in Theorem \ref{polydecayprop} is a relatively separated set, we may employ the machinery presented in Subsection \ref{Some background on g-frames and localization}. In particular, for each $\nu_r$-moderate weight $m$ ($r\geq 0$ arbitrary), the co-orbit space 
$\Co_m^p(\mathcal{G})$ is a well-defined Banach space for each $p\in [1,\infty]$ and a well-defined quasi-Banach space for each $0<p<1$. 

The g-frame related operators associated with the g-frame $\mathcal{G} = (G_\lambda)_{\lambda\in \Lambda} = (L_{\Phi_0\pi(\lambda)^*})_{\lambda\in \Lambda}$ from above have been studied (in a slightly more general setting) in \cite{doelumcskr24}. Before we proceed, we collect the results from \cite{doelumcskr24} we will need. 

\begin{proposition}\label{analbound}\cite{doelumcskr24}
Let $\Phi\in \mathfrak{M}^{1}_{\nu_s}(\mathbb{R}^d)$ and $1\leq p \leq \infty$ be arbitrary. Then the analysis operator $C_{\Phi}: \mathfrak{M}^{p}_m(\mathbb{R}^d)\longrightarrow \ell^{p}_{m}(\Lambda,\mathcal{HS})$, defined by 
    \begin{align*}
        C_{\Phi}(F) = (\Phi^*\pi(\lambda)^*F)_{\lambda\in\Lambda},
    \end{align*}
    is well-defined and bounded by a constant, that depends only on $\Lambda$ and $s$.
\end{proposition}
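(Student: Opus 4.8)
The plan is to reduce the statement to the known boundedness of the continuous operator STFT combined with a sampling estimate. Recall that for $\Phi \in \mathfrak{M}^1_{\nu_s}(\mathbb{R}^d)$ and $F \in \mathfrak{M}^p_m(\mathbb{R}^d)$, the function $z \mapsto \mathcal{V}_\Phi F(z) = \Phi^* \pi(z)^* F$ lies in $L^p_m(\mathbb{R}^{2d};\mathcal{HS})$ with $\|\mathcal{V}_\Phi F\|_{L^p_m} \asymp \|F\|_{\mathfrak{M}^p_m}$ (this is the window-independence of the $\mathfrak{M}^p_m$-norm already recorded in the excerpt). So the content of the proposition is that the $\ell^p_m(\Lambda;\mathcal{HS})$-norm of the \emph{samples} $(\mathcal{V}_\Phi F(\lambda))_{\lambda\in\Lambda}$ is controlled by the continuous $L^p_m$-norm. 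First I would establish a local control estimate: for each $\lambda \in \Lambda$ and each unit cube $Q_\lambda = \lambda + [0,1]^{2d}$,
\[
\|\mathcal{V}_\Phi F(\lambda)\|_{\mathcal{HS}} \leq C \int_{Q_\lambda} \|\mathcal{V}_\Psi F(z)\|_{\mathcal{HS}}\, dz
\]
for a suitable auxiliary window $\Psi$, obtained by writing $\mathcal{V}_\Phi F(\lambda)$ through the reproducing formula for the operator STFT and using that $z \mapsto \|\mathcal{V}_\Phi \Psi(\lambda - z)\|$ (the operator STFT of the window) is integrable and rapidly decaying because $\Phi,\Psi$ can be chosen in $\mathfrak{M}^1_{\nu_s}$. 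This is the operator analogue of the standard convolution-domination inequality $|V_g f(\lambda)| \lesssim (|V_\varphi f| * |V_g \varphi|)(\lambda)$ used to prove sampling results for modulation spaces.

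Second, I would pass from the pointwise estimate to the norm estimate. Using the $\nu_s$-moderateness of $m$, replace $m(\lambda)$ by $m(z)$ up to a constant for $z \in Q_\lambda$ (this costs a factor bounded in terms of $\sup_{|z'|\leq \sqrt{2d}} \nu_s(z')$), so that
\[
\|\mathcal{V}_\Phi F(\lambda)\|_{\mathcal{HS}}\, m(\lambda) \leq C' \int_{Q_\lambda} \|\mathcal{V}_\Psi F(z)\|_{\mathcal{HS}}\, m(z)\, dz.
\]
For $p=1$, summing over $\lambda$ and using that $\Lambda$ is relatively separated (so the cubes $Q_\lambda$ have bounded overlap) gives $\|C_\Phi F\|_{\ell^1_m} \leq C'' \|\mathcal{V}_\Psi F\|_{L^1_m} \asymp \|F\|_{\mathfrak{M}^1_m}$. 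For $1<p<\infty$, apply Hölder's inequality inside each cube to bound $\|\mathcal{V}_\Phi F(\lambda)\|_{\mathcal{HS}}^p m(\lambda)^p$ by $C \int_{Q_\lambda}\|\mathcal{V}_\Psi F(z)\|_{\mathcal{HS}}^p m(z)^p dz$ (again absorbing the weight by moderateness), then sum using bounded overlap. The case $p=\infty$ follows directly from the pointwise estimate and $\|\mathcal{V}_\Psi F\|_{L^\infty_m}$ bounding the integrand.

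The main obstacle is the local control estimate in the first step: one must verify that the operator STFT admits a genuine convolution-type domination, i.e. that there is an $L^1_{\nu_s}$ (scalar) majorant $h$ with $\|\mathcal{V}_\Phi F(z)\|_{\mathcal{HS}} \leq (h * \|\mathcal{V}_\Psi F(\cdot)\|_{\mathcal{HS}})(z)$, and that $h$ can be taken to decay fast enough that summing against the moderate weight $m$ converges. This requires the reproducing identity $\mathcal{V}_\Phi F = \mathcal{V}_\Psi F \,\natural\, \mathcal{V}_\Phi \Psi$ for an appropriate twisted convolution $\natural$ of operator-valued phase-space functions — available from the machinery of \cite{doelumcskr24} — together with the fact that $\Phi_0 = \varphi_0 \otimes \varphi_0 \in \mathfrak{M}^1_{\nu_s}(\mathbb{R}^d)$ for all $s$, which makes $\mathcal{V}_{\Phi_0}\Phi_0$ decay faster than any polynomial. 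Once this kernel estimate is in hand, everything else is the routine sampling-to-norm bookkeeping sketched above, and the constant manifestly depends only on $\Lambda$ (through its relative separation constant and the cube geometry) and on $s$ (through $\|\Phi_0\|_{\mathfrak{M}^1_{\nu_s}}$ and the moderateness constant of $m$), as claimed.
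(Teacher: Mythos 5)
First, note that the paper does not prove this proposition itself --- it is imported verbatim from \cite{doelumcskr24} --- so there is no in-paper proof to compare against; your strategy (reduce sampling boundedness to a convolution-type domination of the operator STFT by the operator STFT with a nicer window, then do weighted bookkeeping over the lattice) is indeed the standard route for such statements and is surely close in spirit to the source. However, there is a genuine flaw in how you set it up. Your first displayed ``local control estimate''
\[
\|\mathcal{V}_\Phi F(\lambda)\|_{\mathcal{HS}} \leq C \int_{Q_\lambda} \|\mathcal{V}_\Psi F(z)\|_{\mathcal{HS}}\, dz
\]
is false: a point value of $\mathcal{V}_\Phi F$ at $\lambda$ cannot be controlled by the integral of $\mathcal{V}_\Psi F$ over the \emph{single} cube $Q_\lambda$. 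Already in the scalar case, taking $\Psi=\varphi_0$ Gaussian, $\Phi$ a window whose STFT has only polynomial decay, and $F$ a time-frequency shift $\pi(\mu)\Phi_0$ with $|\mu-\lambda|$ large, the left-hand side decays polynomially in $|\lambda-\mu|$ while the right-hand side decays like a Gaussian, so no uniform constant $C$ exists. The correct consequence of the reproducing formula is the \emph{global} convolution domination $\|\mathcal{V}_\Phi F(\lambda)\|_{\mathcal{HS}} \leq (h \ast \|\mathcal{V}_\Psi F(\cdot)\|_{\mathcal{HS}})(\lambda)$ with $h$ essentially $\|\mathcal{V}_\Psi\Phi\|_{\mathcal{HS}}$ --- which you do state at the end --- but this integrates over all of $\mathbb{R}^{2d}$, not over $Q_\lambda$, so the cube-by-cube H\"older-plus-bounded-overlap bookkeeping of your second step does not follow from it.

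The repair is standard but is a real restructuring rather than ``routine bookkeeping'': from the convolution domination one shows that $z\mapsto\|\mathcal{V}_\Phi F(z)\|_{\mathcal{HS}}$ lies in the Wiener amalgam space $W(L^\infty,L^p_m)(\mathbb{R}^{2d})$, using the convolution relation $L^p_m \ast W(L^\infty,L^1_{\nu_s}) \hookrightarrow W(L^\infty,L^p_m)$ (this requires $h\in W(L^\infty,L^1_{\nu_s})$, i.e.\ a local supremum control on the kernel, not merely $h\in L^1_{\nu_s}$ --- another point your sketch glosses over, though it does hold for $\Phi,\Psi\in\mathfrak{M}^1_{\nu_s}$), and then invokes the fact that restricting a function in $W(L^\infty,L^p_m)$ to a relatively separated set gives a sequence in $\ell^p_m$, with constant depending on the separation constant of $\Lambda$ and the moderateness constant of $m$. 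With that substitution your argument goes through for all $1\le p\le\infty$ and yields the claimed dependence of the constant (which, strictly, also involves $\|\Phi\|_{\mathfrak{M}^1_{\nu_s}}$ for a general window $\Phi$, not only $\|\Phi_0\|_{\mathfrak{M}^1_{\nu_s}}$). So: right key lemma, wrong intermediate inequality, and the passage from the key lemma to the conclusion needs the amalgam machinery rather than the local cube estimate.
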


\begin{proposition}\label{synthbound}\cite{doelumcskr24}
Let $\Phi\in \mathfrak{M}^{1}_{\nu_s}(\mathbb{R}^d)$ and $1\leq p \leq \infty$ be arbitrary. Then the synthesis operator $D_{\Phi}: \ell^{p}_{m}(\Lambda;\mathcal{HS}) \longrightarrow \mathfrak{M}^{p}_{{m}}(\mathbb{R}^d)$, defined by 
    \begin{align*}D_{\Phi}((G_{\lambda})_{\lambda\in\Lambda}) = \sum_{\lambda\in\Lambda} \pi(\lambda)\Phi G_{\lambda},
    \end{align*}
is well-defined and bounded by a constant, that depends only on $\Lambda$ and $s$. Moreover, the above series converges unconditionally in $\mathfrak{M}^{p}_{{m}}(\mathbb{R}^d)$ for $1\leq p < \infty$, and unconditionally weak* in the case $p=\infty$.
\end{proposition}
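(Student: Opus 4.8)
Here is how I would approach the statement (writing $D_\Phi$ for the synthesis operator and keeping the notation of the excerpt).

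The plan is to reduce the boundedness of $D_\Phi$ to a Young-type convolution inequality between a function and a sequence, in exact analogy with the scalar modulation-space theory. Fix $\Phi\in\mathfrak{M}^1_{\nu_s}(\mathbb{R}^d)$ and write $\Psi_\lambda:=\pi(\lambda)\Phi G_\lambda$. First I would compute the operator STFT of a single term. Using the quasi-periodicity $\pi(z)^*\pi(\lambda)=c(z,\lambda)\,\pi(\lambda-z)$ together with $\pi(\lambda-z)=c'(z,\lambda)\,\pi(z-\lambda)^*$, where $c,c'$ are unimodular cocycle factors, one gets $\mathcal{V}_{\Phi_0}\Psi_\lambda(z)=\gamma_\lambda(z)\,(\mathcal{V}_{\Phi_0}\Phi)(z-\lambda)\,G_\lambda$ with $|\gamma_\lambda(z)|\equiv 1$. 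Applying the triangle inequality in $\HS$ and $\|AB\|_{\HS}\le\|A\|_{\HS}\|B\|_{\HS}$ to a finite partial sum then yields the pointwise scalar domination
\[
\Big\|\mathcal{V}_{\Phi_0}\Big(\sum_{\lambda\in F}\Psi_\lambda\Big)(z)\Big\|_{\HS}\;\le\;\sum_{\lambda\in F}H(z-\lambda)\,\|G_\lambda\|_{\HS},\qquad H(w):=\big\|\mathcal{V}_{\Phi_0}\Phi(w)\big\|_{\HS},
\]
and, by definition of $\mathfrak{M}^1_{\nu_s}(\mathbb{R}^d)$, $H\in L^1_{\nu_s}(\mathbb{R}^{2d})$.

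The second step absorbs the weight $m$. By $\nu_s$-moderateness, $m(z)\le C\,m(\lambda)\,\nu_s(z-\lambda)$, so setting $\widetilde H:=H\,\nu_s\in L^1(\mathbb{R}^{2d})$ and $a_\lambda:=\|G_\lambda\|_{\HS}\,m(\lambda)$, the $\mathfrak{M}^p_m$-norm of the finite partial sum is bounded by $\big\|\,z\mapsto\sum_{\lambda\in F}\widetilde H(z-\lambda)\,a_\lambda\,\big\|_{L^p(\mathbb{R}^{2d})}$, while $\|(a_\lambda)\|_{\ell^p}=\|(G_\lambda)\|_{\ell^p_m(\Lambda;\HS)}$. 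I would then dominate this by $C(\Lambda)\,\|\widetilde H\|_{W(L^\infty,\ell^1)}\,\|(a_\lambda)\|_{\ell^p}$, uniformly in $1\le p\le\infty$ and in $F$: this is a standard amalgam-space convolution lemma, obtained by replacing $\widetilde H(\cdot-\lambda)$ on each translate of a fundamental cell of the lattice $\Lambda$ by its supremum and invoking discrete Young's inequality $\ell^1\ast\ell^p\hookrightarrow\ell^p$; it is of the same kind as the decomposition estimates already underpinning Proposition~\ref{analbound}, cf.~\cite{doelumcskr24}.

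The crux --- and the step I expect to be the main obstacle --- is verifying that $\widetilde H\in W(L^\infty,\ell^1)(\mathbb{R}^{2d})$, i.e.\ that mere $L^1_{\nu_s}$-membership of $H$ self-improves to amalgam membership of $H\nu_s$. This follows from the reproducing-kernel structure of the operator STFT: one has a pointwise domination $H(z)\le(H\ast H_0)(z)$ with $H_0(w)=\|\mathcal{V}_{\Phi_0}\Phi_0(w)\|_{\HS}=|V_{\varphi_0}\varphi_0(w)|=e^{-\pi|w|^2/2}$, a Gaussian which lies in $W(C,L^1_{\nu_s})(\mathbb{R}^{2d})$ for every $s\ge 0$; since $L^1_{\nu_s}\ast W(C,L^1_{\nu_s})\hookrightarrow W(L^\infty,\ell^1_{\nu_s})\hookrightarrow W(L^\infty,\ell^1)$, this gives $\widetilde H\in W(L^\infty,\ell^1)$ with $\|\widetilde H\|_{W(L^\infty,\ell^1)}\lesssim_s\|\Phi\|_{\mathfrak{M}^1_{\nu_s}}$. (Alternatively, one may simply invoke that $\mathfrak{M}^1_{\nu_s}(\mathbb{R}^d)$ carries an equivalent amalgam-type norm, as established in \cite{doelumcskr24}.) Together the three steps show that $D_\Phi:\ell^p_m(\Lambda;\HS)\longrightarrow\mathfrak{M}^p_m(\mathbb{R}^d)$ is well-defined and bounded with norm $\lesssim_{\Lambda,s}\|\Phi\|_{\mathfrak{M}^1_{\nu_s}}$.

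Finally, for the convergence claims: when $1\le p<\infty$, applying the same estimate to a tail gives $\big\|\sum_{\lambda\in F'}\Psi_\lambda\big\|_{\mathfrak{M}^p_m}\lesssim_{\Lambda,s}\|(G_\lambda)_{\lambda\in F'}\|_{\ell^p_m}\to 0$ as $F'$ ranges over complements of finite subsets of $\Lambda$, so the net of finite partial sums is Cauchy in $\mathfrak{M}^p_m(\mathbb{R}^d)$; since the bound does not see the ordering, the series converges unconditionally to $D_\Phi((G_\lambda))$. When $p=\infty$, $\ell^\infty_m$ is no longer separable, but $\mathfrak{M}^\infty_m(\mathbb{R}^d)$ is the dual of $\mathfrak{M}^1_{1/m}(\mathbb{R}^d)$, so I would instead test the partial sums against an arbitrary $R\in\mathfrak{M}^1_{1/m}(\mathbb{R}^d)$: the first two steps turn $\langle\sum_{\lambda\in F}\Psi_\lambda,R\rangle$ into an absolutely --- hence unconditionally --- convergent scalar series, identifying the weak* limit as $D_\Phi((G_\lambda))$. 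Beyond the amalgam membership of $\widetilde H$, all of this is routine bookkeeping.
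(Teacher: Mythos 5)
Your argument is correct: the covariance identity $\mathcal{V}_{\Phi_0}(\pi(\lambda)\Phi G_\lambda)(z)=\gamma_\lambda(z)\,\mathcal{V}_{\Phi_0}\Phi(z-\lambda)G_\lambda$, the reduction to a semi-discrete convolution against $\widetilde H=H\nu_s$, and the self-improvement $H\le H_0\ast H$ (with $H_0$ the Gaussian reproducing kernel) placing $\widetilde H$ in $W(L^\infty,\ell^1)$ are exactly the right ingredients, and the tail/duality arguments for the convergence claims are sound. Note that the paper offers no proof of this proposition --- it is quoted from \cite{doelumcskr24} --- but your route is the standard Feichtinger--Gr\"ochenig synthesis estimate transplanted to the operator STFT, which is precisely how the cited reference proceeds, so there is nothing substantive to contrast.
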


\begin{proposition}\label{framerecMp}\cite{doelumcskr24}
Let $\Phi,\Psi \in\mathfrak{M}^1_{\nu_s}(\mathbb{R}^d)$. If $(L_{\Psi \pi(\lambda)^*})_{\lambda \in \Lambda}$ and $(L_{\Phi \pi(\lambda)^*})_{\lambda \in \Lambda}$ are dual g-frames in $\HS$, then $D_{\Psi}C_{\Phi} = \mathcal{I}_{\B(\mathcal{HS})}$ and $D_{\Psi} C_{\Phi} = \mathcal{I}_{\B(\mathfrak{M}^{p}_m)}$ for all $1\leq p\leq \infty$, where the associated series converges unconditionally in $\mathfrak{M}^{p}_{{m}}(\mathbb{R}^d)$ for $1\leq p < \infty$, and unconditionally weak* in the case $p=\infty$.
\end{proposition}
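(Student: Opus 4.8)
The plan is to settle the identity first on $\HS$ --- where it is essentially the dual-g-frame relation (\ref{dualgframeoperatornotation}) for the lifted g-frames of the hypothesis --- and then to propagate it to the spaces $\mathfrak{M}^p_m(\mathbb{R}^d)$, by norm density for $1\le p<\infty$ and by a weak* argument for $p=\infty$. \textbf{Step 1 (on $\HS$).} For $F\in\HS$ the definitions in Propositions \ref{analbound} and \ref{synthbound} give $D_\Psi C_\Phi(F)=\sum_{\lambda\in\Lambda}\pi(\lambda)\,\Psi\,\Phi^*\,\pi(\lambda)^*F$. Using (\ref{leftmult}) together with $(L_W)^*=L_{W^*}$ and $L_{W_1}L_{W_2}=L_{W_1W_2}$, one checks that on $\HS$ this composition is exactly the product of the synthesis operator of one and the analysis operator of the other of the two lifted g-frames in the hypothesis; since these are dual g-frames in $\HS$, (\ref{dualgframeoperatornotation}) yields $D_\Psi C_\Phi=\mathcal{I}_{\B(\HS)}$. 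Unconditional convergence of the series in $\HS$ is the usual g-Bessel estimate: with $g_\lambda$ the $\lambda$-entry of $C_\Phi(F)$, one has $(g_\lambda)_\lambda\in\ell^2(\Lambda;\HS)$ and, for every finite $F_0\subset\Lambda$, the partial sum over $F_0$ has $\HS$-norm at most $\sqrt{B}\,\big(\sum_{\lambda\in F_0}\|g_\lambda\|_{\HS}^2\big)^{1/2}$.

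\textbf{Step 2 (on $\mathfrak{M}^p_m(\mathbb{R}^d)$, $1\le p<\infty$).} By Propositions \ref{analbound} and \ref{synthbound}, the maps $C_\Phi:\mathfrak{M}^p_m(\mathbb{R}^d)\to\ell^p_m(\Lambda;\HS)$ and $D_\Psi:\ell^p_m(\Lambda;\HS)\to\mathfrak{M}^p_m(\mathbb{R}^d)$ are bounded, the latter with unconditionally convergent defining series; hence $D_\Psi C_\Phi$ is bounded on $\mathfrak{M}^p_m(\mathbb{R}^d)$ and $\sum_{\lambda}\pi(\lambda)\Psi\Phi^*\pi(\lambda)^*F$ converges unconditionally there. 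Since $m$ is $\nu_s$-moderate we have $m\lesssim\nu_s$, so $\mathfrak{M}^1_{\nu_s}(\mathbb{R}^d)\subset\mathfrak{M}^p_m(\mathbb{R}^d)$ and also $\mathfrak{M}^1_{\nu_s}(\mathbb{R}^d)\subset\mathfrak{M}^2(\mathbb{R}^d)=\HS$; moreover $\mathfrak{M}^1_{\nu_s}(\mathbb{R}^d)$ is dense in $\mathfrak{M}^p_m(\mathbb{R}^d)$ for $p<\infty$ (the operator analogue of the density of $\mathscr{S}(\mathbb{R}^d)$ in $M^p_m(\mathbb{R}^d)$; finite partial sums of the atomic expansion of Proposition \ref{synthbound} already lie in $\mathfrak{M}^1_{\nu_s}(\mathbb{R}^d)$). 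On this dense subspace Step 1 gives $D_\Psi C_\Phi F=F$, and since $D_\Psi C_\Phi$ and $\mathcal{I}_{\B(\mathfrak{M}^p_m)}$ are bounded and agree densely, they coincide. The case $p=2$, $m\equiv 1$ recovers the statement on $\HS$.

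\textbf{Step 3 (on $\mathfrak{M}^\infty_m(\mathbb{R}^d)$).} Here $\mathfrak{M}^1_{\nu_s}(\mathbb{R}^d)$ is only weak*-dense in $\mathfrak{M}^\infty_m(\mathbb{R}^d)=\big(\mathfrak{M}^1_{1/m}(\mathbb{R}^d)\big)'$, so it suffices to show that $D_\Psi C_\Phi$ is weak*-continuous on $\mathfrak{M}^\infty_m(\mathbb{R}^d)$. With respect to the pairing given by (the extension of) the $\HS$-inner product --- which realizes both $\mathfrak{M}^\infty_m(\mathbb{R}^d)=(\mathfrak{M}^1_{1/m}(\mathbb{R}^d))'$ and $\ell^\infty_m(\Lambda;\HS)=(\ell^1_{1/m}(\Lambda;\HS))'$ --- the same computation that shows $C_\mathcal{U}$ and $D_\mathcal{U}$ are mutually adjoint shows that $C_\Phi$ and $D_\Psi$ are the Banach adjoints of the bounded maps $D_\Phi:\ell^1_{1/m}(\Lambda;\HS)\to\mathfrak{M}^1_{1/m}(\mathbb{R}^d)$ and $C_\Psi:\mathfrak{M}^1_{1/m}(\mathbb{R}^d)\to\ell^1_{1/m}(\Lambda;\HS)$ respectively (both bounded again by Propositions \ref{analbound} and \ref{synthbound}, now with the $\nu_s$-moderate weight $1/m$); hence $D_\Psi C_\Phi=(D_\Phi C_\Psi)^*$ is weak*-continuous. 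As $D_\Psi C_\Phi$ equals $\mathcal I$ on the weak*-dense subspace $\mathfrak{M}^1_{\nu_s}(\mathbb{R}^d)$ by Step 1, we conclude $D_\Psi C_\Phi=\mathcal{I}_{\B(\mathfrak{M}^\infty_m)}$, the series converging unconditionally weak* by Proposition \ref{synthbound}.

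\textbf{Main obstacle.} The case of $\HS$ and the range $1\le p<\infty$ are routine once the lifting dictionary (Proposition \ref{HilversusHS}) and Propositions \ref{analbound} and \ref{synthbound} are at hand. The delicate part is the endpoint $p=\infty$: one cannot use norm density and must instead argue via weak*-continuity, which requires pinning down the predual $\mathfrak{M}^1_{1/m}(\mathbb{R}^d)$ and establishing the adjoint relations used above. A secondary technical point --- invoked in Steps 2 and 3 --- is the (weak*-)density of $\mathfrak{M}^1_{\nu_s}(\mathbb{R}^d)$ in $\mathfrak{M}^p_m(\mathbb{R}^d)$, which rests on the atomic-decomposition results already established for these spaces.
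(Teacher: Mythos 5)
The paper does not prove Proposition \ref{framerecMp}: it is stated with a citation to \cite{doelumcskr24} and imported as a known result, so there is no in-paper argument to compare yours against. Judged on its own terms, your three-step scheme (identity on $\HS$ from the lifted dual-g-frame relation of Proposition \ref{HilversusHS}, transfer to $\mathfrak{M}^p_m$ by boundedness plus density for $p<\infty$, adjointness/weak*-continuity for $p=\infty$) is the natural way to prove such a statement, and Step 3 is handled well: once $C_\Phi=(D_\Phi)^*$ and $D_\Psi=(C_\Psi)^*$ are identified relative to the pairing $(\mathfrak{M}^1_{1/m})'\cong\mathfrak{M}^\infty_m$, the relation $D_\Psi C_\Phi=(D_\Phi C_\Psi)^*$ together with the already-proved $p=1$ case (weight $1/m$, windows swapped, which is permissible since g-frame duality is symmetric) gives the endpoint directly, without even needing the weak*-density step.

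Two points need repair. First, the density claim in Step 2 is circular as you justify it: you argue that $\mathfrak{M}^1_{\nu_s}(\mathbb{R}^d)$ is dense in $\mathfrak{M}^p_m(\mathbb{R}^d)$ because the finite partial sums of the atomic expansion lie in $\mathfrak{M}^1_{\nu_s}(\mathbb{R}^d)$ --- but knowing that this expansion converges to $F$ in $\mathfrak{M}^p_m(\mathbb{R}^d)$ is precisely the reconstruction formula $D_\Psi C_\Phi=\mathcal{I}$ that you are trying to establish. You must obtain density from an independent source (in \cite{doelumcskr24} it comes from the continuous inversion formula / correspondence principle for the operator STFT, or from density of Schwartz-class operators), or restructure the argument as coorbit theory usually does: show that for $F\in\mathfrak{M}^p_m(\mathbb{R}^d)$ the sequence $C_\Phi F$ lies in $\ell^p_m(\Lambda;\HS)$, that $D_\Psi(C_\Phi F)$ converges unconditionally in $\mathfrak{M}^p_m(\mathbb{R}^d)$, and that the limit coincides with $F$ by comparing both in a common weak space where the $\HS$-level identity already applies. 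Second, a bookkeeping issue: with the paper's definitions, $D_\Psi C_\Phi F=\sum_\lambda L_{\pi(\lambda)\Psi}L_{\Phi^*\pi(\lambda)^*}F$ is the synthesis--analysis product of the lifted g-frames generated by $\Psi^*$ and $\Phi^*$, not by $\Psi$ and $\Phi$ as the hypothesis literally reads; this adjoint discrepancy originates in the statement itself, but your ``one checks'' in Step 1 glosses over exactly the place where the convention must be pinned down.
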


We are now prepared to prove the main result of this section. It can be seen as an operator analogue of \cite[Theorem 4.1]{forngroech05}.

\begin{theorem}\label{mainchar}
Let $\mathcal{G} = (G_{\lambda})_{\lambda \in \Lambda} = (L_{\pi(\lambda)\Phi_0\pi(\lambda)^*})_{\lambda \in \Lambda}$ be the g-frame occurring in Theorem \ref{polydecayprop}, $m$ be a $\nu_r$-moderate weight for some $r\geq 0$ and $1\leq p \leq \infty$ be arbitrary. Then
$$\mathfrak{M}_m^p(\mathbb{R}^d) = \Co_m^p(\mathcal{G})$$
with equivalent norms.
\end{theorem}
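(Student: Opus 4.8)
The plan is to identify the two families of spaces by showing that the continuous $L^p_m$-norm of the operator STFT and the discrete $\ell^p_m$-norm of the sampled operator STFT with respect to the same window $\Phi_0$ are equivalent, and that both spaces are completions of the same dense subspace under equivalent norms. The essential bridge is that the g-frame $\mathcal{G} = (G_\lambda)_{\lambda\in\Lambda}$ from Theorem \ref{polydecayprop} is $s$-localized for every $s>0$ (in particular $s>2d+r$), so Definition \ref{Hpwdef} and Theorems \ref{isometrycor}, \ref{Donto} apply and describe $\Co_m^p(\mathcal{G})$ entirely in terms of the sampled coefficients $(G_\lambda F)_{\lambda} = (\Phi_0\pi(\lambda)^* F)_{\lambda}$ and their duals.

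First I would treat the range $1\le p\le\infty$ using the results imported from \cite{doelumcskr24}. By Proposition \ref{analbound}, the analysis map $C_{\Phi_0}:\mathfrak{M}^p_m(\mathbb{R}^d)\to\ell^p_m(\Lambda;\HS)$ is bounded, i.e. $\Vert C_{\widetilde{\mathcal{G}}}F\Vert_{\ell^p_m}\lesssim\Vert F\Vert_{\mathfrak{M}^p_m}$ after recalling (via the Lifting Property, Proposition \ref{HilversusHS}) that $C_{\mathcal{G}}F = (G_\lambda F)_\lambda$ corresponds exactly to sampling $\mathcal{V}_{\Phi_0}F$; using Theorem \ref{isometrycor}, the $\ell^p_m$-norm of $C_{\mathcal{G}}F$ and of $C_{\widetilde{\mathcal{G}}}F$ are equivalent, so this gives one inequality $\Vert F\Vert_{\Co_m^p(\mathcal{G})}\lesssim\Vert F\Vert_{\mathfrak{M}^p_m}$. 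For the reverse, I would invoke Proposition \ref{synthbound} and Proposition \ref{framerecMp}: choosing $\Psi$ so that $(L_{\Psi\pi(\lambda)^*})_\lambda$ is the canonical dual g-frame of $\mathcal{G}$ (which lies in $\mathfrak{M}^1_{\nu_s}$ since $\varphi_0\in\mathscr{S}$ and duality preserves membership — here Theorem \ref{duallocalized} and the Lifting Property give $\widetilde{\mathcal{G}} = (L_{\widetilde{\Phi_0}\pi(\lambda)^*})$ with the dual window still Schwartz-type), we get $F = D_{\Psi}C_{\Phi_0}F$ with $D_\Psi:\ell^p_m\to\mathfrak{M}^p_m$ bounded, hence $\Vert F\Vert_{\mathfrak{M}^p_m}\lesssim\Vert C_{\Phi_0}F\Vert_{\ell^p_m}\asymp\Vert F\Vert_{\Co_m^p(\mathcal{G})}$. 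Since for $1\le p<\infty$ both spaces are completions of $\Hil^{00}(\mathcal{G}) = D_{\mathcal{G}}(c_{00}(\Lambda;\HS))$ — which is dense in $\mathfrak{M}^p_m$ by the unconditional convergence in Proposition \ref{synthbound} — the norm equivalence on this common dense subspace yields equality of the completions; the case $p=\infty$ is handled through the duality description of both $\Co_m^\infty$ and $\mathfrak{M}^\infty_m$ against their respective $L^1_{1/m}$/$\Co^1_{1/m}$ preduals, for which the $p=1$ case already gives the identification.

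The main obstacle is the bookkeeping around \emph{which} window and \emph{which} dual appear where: Theorem \ref{polydecayprop} and the statement of Theorem \ref{mainchar} phrase the g-frame as $G_\lambda = L_{\pi(\lambda)\Phi_0\pi(\lambda)^*}$ (the ``intrinsic'' symmetric normalization), whereas the operator STFT and the $\mathfrak{M}^p_m$-norm are defined via $\Phi_0^*\pi(z)^*F$; reconciling these requires using that $\pi(\lambda)$ is unitary so that $(\pi(\lambda)\Phi_0\pi(\lambda)^*)_\lambda$ and $(\Phi_0\pi(\lambda)^*)_\lambda$ generate the same lifted g-frame up to the unitary twist, together with Proposition \ref{HilversusHS}(iv), which guarantees $S_{\mathcal{G}}$ commutes with $L_{\pi(\mu)}$ so the canonical dual again has the g-Gabor form with a fixed dual window $\widetilde\Phi_0 := S_{\mathcal{G}}^{-1}\Phi_0\in\mathfrak{M}^1_{\nu_s}$. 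A secondary point requiring care is verifying that $\Hil^{00}(\mathcal{G})$ is genuinely dense in $\mathfrak{M}^p_m(\mathbb{R}^d)$ for the full range $1\le p<\infty$ and that the abstract co-orbit norm of Definition \ref{Hpwdef} (built from $C_{\widetilde{\mathcal{G}}}$) matches the concrete $\Vert\mathcal{V}_{\Phi_0}\cdot\Vert_{L^p_m}$-norm rather than merely being equivalent to it up to the window change — but Theorem \ref{isometrycor}'s assertion that $\Co_m^p(\mathcal{G}) = \Co_m^p(\widetilde{\mathcal{G}})$ with $\Vert(\widetilde{U}_\lambda f)\Vert\asymp\Vert(U_\lambda f)\Vert$ absorbs exactly this discrepancy, so only equivalence (not equality) of norms is claimed and needs to be proved.
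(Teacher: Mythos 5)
Your proposal is correct and follows essentially the same route as the paper: one inclusion via Proposition \ref{analbound} together with the reconstruction $F = D_{\Phi_0 S_0^{-1}}C_{\Phi_0}F$ from Proposition \ref{framerecMp} (with the dual window $\Phi_0 S_0^{-1}\in\mathfrak{M}^1_{\nu_s}$ identified through the Lifting Property), the reverse inclusion via the surjectivity of $D_{\mathcal{G}}$ from Theorem \ref{Donto} combined with the boundedness of $D_{\Phi_0}$ from Proposition \ref{synthbound}, and the case $p=\infty$ by duality. The only cosmetic difference is that the paper proves one norm inequality explicitly and obtains the other from the bounded inverse theorem, whereas you bound both directions directly; both are fine.
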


\begin{proof}
The case $p=\infty$ can be deduced from the case $p=1$ via the duality relations $(\mathfrak{M}_{1/m}^1(\mathbb{R}^d))' \cong \mathfrak{M}_m^{\infty}(\mathbb{R}^d)$ \cite[Proposition 5.17]{doelumcskr24} and $(\Co_{1/m}^1(\mathcal{G}))' \simeq \Co_m^{\infty}(\mathcal{G})$ \cite[Theorem 3.29]{köbaLOC25} after noticing that the weight $1/m$ is $\nu_r$-moderate (\cite[Lemma 11.1.1]{gr01}). 

We fix some $1\leq p < \infty$ and let $F\in \Co_m^p(\mathcal{G})$ be arbitrary. By Theorem \ref{Donto} the g-frame synthesis operator $D_{\mathcal{G}}: \ell_{m}^{p}(\Lambda;\HS) \longrightarrow  \Co_{m}^{p}(\mathcal{G})$ is onto. Thus, there exists some $(H_{\lambda})_{\lambda \in \Lambda}\in \ell_{m}^{p}(\Lambda;\HS)$ such that $F = \sum_{\lambda \in \Lambda} \pi(\lambda)\Phi_0 H_{\lambda}$. 
At the same time, 
Proposition \ref{synthbound} says that synthesis operator $D_{\Phi_0}$ associated to the operator window $\Phi_0$ is bounded from $\ell_{m}^{p}(\Lambda;\HS)$ into $\mathfrak{M}_{m}^{p}(\mathbb{R}^d)$. Thus $F = D_{\Phi_0} (H_{\lambda})_{\lambda \in \Lambda}$ is contained in $\mathfrak{M}_{m}^{p}(\mathbb{R}^d)$ and we may estimate 
$$\Vert F \Vert_{\mathfrak{M}_{m}^{p}} = \left\Vert \sum_{\lambda \in \Lambda} \pi(\lambda)\Phi_0 H_{\lambda} \right\Vert_{\mathfrak{M}_{m}^{p}} \leq C \Vert (H_{\lambda})_{\lambda \in \Lambda}\Vert_{\ell_{m}^{p}(\Lambda;\HS)}.$$
The latter is true for all $(H_{\lambda})_{\lambda \in \Lambda}\in \ell_{m}^{p}(\Lambda;\HS)$ with $D_{\mathcal{G}} (H_{\lambda})_{\lambda \in \Lambda} = F$. Therefore, 
\begin{flalign}
\Vert F \Vert_{\mathfrak{M}^p_{m}} &\leq C \inf\left\lbrace \Vert (H_{\lambda})_{\lambda \in \Lambda} \Vert_{\ell_{m}^{p}(\Lambda;\HS)} : (H_{\lambda})_{\lambda \in \Lambda}\in \ell_{m}^{p}(\Lambda;\HS), F = D_{\mathcal{G}} (H_{\lambda})_{\lambda \in \Lambda} \right\rbrace \notag \\
&\leq C'\Vert F\Vert_{\Co_m^p(\mathcal{G})},\notag
\end{flalign}
where we used the norm equivalence stated in Theorem \ref{Donto} in the last estimate.

Conversely, let $F\in \mathfrak{M}_{m}^p(\mathbb{R}^d)$ be arbitrary. According to Proposition \ref{analbound}, $C_{\Phi_0}F = (\Phi_0\pi(\lambda)^*F)_{\lambda \in \Lambda} \in \ell_{m}^{p}(\Lambda;\HS)$. 
Next, let $S_{0}$ denote the frame operator associated to the g-frame $(\Phi_0\pi(\lambda)^*)_{\lambda \in \Lambda}$ for $L^2(\mathbb{R}^d)$ appearing in the proof of Theorem \ref{polydecayprop}. Then, according to \cite[Subsection 5.2.1]{skrett21}, the canonical dual g-frame of $(\Phi_0\pi(\lambda)^*)_{\lambda \in \Lambda}$ in $L^2(\mathbb{R}^d)$ is given by $(\Phi_0 S_{0}^{-1}\pi(\lambda)^*)_{\lambda \in \Lambda}$. By Proposition \ref{HilversusHS}, $(L_{\Phi_0 S_{0}^{-1}\pi(\lambda)^*})_{\lambda \in \Lambda}$ equals the canonical dual g-frame $(\widetilde{G}_{\lambda})_{\lambda \in \Lambda}$ of $(G_{\lambda})_{\lambda \in \Lambda}$ in $\HS$. Furthermore, we also note that $\Phi_0 S_{0}^{-1} \in \mathfrak{M}^1_{\nu_s}(\mathbb{R}^d)$ for each $s\geq 0$, since
\begin{flalign}
\int_{\mathbb{R}^{2d}} \Vert \Phi_0 \pi(z)^* \Phi_0 S_{0}^{-1}\Vert_{\HS}\nu_s(z) \, dz &\leq \Vert S_{0}^{-1}\Vert_{\B(L^2(\mathbb{R}^d))} \int_{\mathbb{R}^{2d}} \Vert \Phi_0 \pi(z)^* \Phi_0 \Vert_{\HS}\nu_s(z) \, dz \notag \\
&= \Vert S_{0}^{-1}\Vert_{\B(L^2(\mathbb{R}^d))} \Vert \Phi_0\Vert_{\mathfrak{M}^1_{\nu_s}(\mathbb{R}^d)} \notag \\
&= \Vert S_{0}^{-1}\Vert_{\B(L^2(\mathbb{R}^d))} \Vert \varphi_0\Vert_{M^1_{\nu_s}(\mathbb{R}^d)} <\infty \notag
\end{flalign}
for each $s\geq 0$. Now, substituting $\Phi = \Phi_0$ and $\Psi = \Phi_0 S_{0}^{-1}$ in Proposition \ref{framerecMp} yields that 
\begin{flalign}
F &= D_{\Phi_0 S_{0}^{-1}} C_{\Phi_0}F \notag \\
&= \sum_{\lambda \in \Lambda} \pi(\lambda) S_{0}^{-1} \Phi_0^2\pi(\lambda)^*F \notag \\ 
&= D_{\widetilde{\mathcal{G}}}(\Phi_0\pi(\lambda)^*F)_{\lambda \in \Lambda}. \notag 
\end{flalign}
Thus $F\in \Co_m^p(\mathcal{G})$ according to Theorem \ref{Donto}.

Altogether, we have shown that $\Co_{m}^{p}(\mathcal{G}) = \mathfrak{M}_m^p(\mathbb{R}^d)$ and that the identity map $id:\Co_{m}^{p}(\mathcal{G}) \longrightarrow \mathfrak{M}_m^p(\mathbb{R}^d)$ is continuous. Therefore, the norms on $\Co_{m}^{p}(\mathcal{G})$ and $\mathfrak{M}_m^p(\mathbb{R}^d)$ are equivalent as a consequence of the bounded inverse theorem. 
\end{proof}

Recall that the spaces $\Co_{m}^{p}(\mathcal{G})$ are also well-defined quasi-Banach spaces for all $0<p<1$ (where $m$ is a $\nu_r$-moderate weight for any $r\geq 0$), since the g-frame $(G_{\lambda})_{\lambda \in \Lambda}$ from Theorem \ref{polydecayprop} is $s$-localized for \emph{all} $s>2d$. Thus, the above characterization provides a meaningful way to extend the definition of the spaces $\mathfrak{M}_m^p(\mathbb{R}^d)$ to exponents $0<p<1$.

\begin{definition}[Operator coorbit spaces for $0<p<1$]\label{Mpdef}
Let $m$ be any $\nu_r$-moderate weight, where $r\geq 0$, let $0<p<1$ and let $\mathcal{G} =(G_{\lambda})_{\lambda \in \Lambda}$ be the g-frame from Theorem \ref{polydecay}. Then we define
$$\mathfrak{M}_m^p(\mathbb{R}^d) := \Co_{m}^{p}(\mathcal{G}).$$
\end{definition}

It is worthwhile mentioning the following consequence of Theorem \ref{mainchar}, Theorem \ref{isometrycor} and Proposition \ref{Vpw}.

\begin{corollary}\label{Mpeasy}
For $0<p\leq 2$ it holds
$$\mathfrak{M}^p(\mathbb{R}^d) = \left\lbrace F\in \HS(L^2(\mathbb{R}^d)): (G_{\lambda} F)_{\lambda \in \Lambda} = (\mathcal{V}_{\Phi_0} F (\lambda) )_{\lambda \in \Lambda} \in \ell^p(\Lambda;\HS)  \right\rbrace$$
and 
$$\Vert F \Vert_{\mathfrak{M}^p(\mathbb{R}^d)} \asymp \Vert (G_{\lambda} F)_{\lambda \in \Lambda} \Vert_{\ell^p(\Lambda;\HS)} = \Vert (\mathcal{V}_{\Phi_0} F (\lambda))_{\lambda \in \Lambda} \Vert_{\ell^p(\Lambda;\HS)}.$$
\end{corollary}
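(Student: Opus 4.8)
The plan is to derive Corollary \ref{Mpeasy} as a straightforward combination of the identification result and the abstract co-orbit machinery quoted earlier. By Theorem \ref{mainchar}, for $1 \leq p \leq 2$ we already have $\mathfrak{M}^p(\mathbb{R}^d) = \Co_m^p(\mathcal{G})$ with equivalent norms (taking $m \equiv 1$, which is $\nu_0$-moderate), and by Definition \ref{Mpdef} the same identity holds by \emph{definition} for $0 < p < 1$. So in all cases $0 < p \leq 2$ the claim reduces to unwinding what $\Co^p(\mathcal{G})$ is in terms of the g-frame $\mathcal{G} = (G_\lambda)_{\lambda \in \Lambda}$ with $G_\lambda = L_{\Phi_0 \pi(\lambda)^*}$.

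First I would recall from Theorem \ref{polydecayprop} that $\mathcal{G}$ is $s$-localized for every $s > 2d$, so for any fixed such $s$ we may take $r = 0$, $m = \nu_0 \equiv 1$, and then $d/(s-r) = 2d/s < 1 \leq 2$, so every $p \in (0,2]$ lies in the admissible range $(\frac{2d}{s-r}, \infty)$ of Definition \ref{Hpwdef}. Next, the key observation is the computation of the analysis coefficients: for $F \in \HS$,
$$G_\lambda F = L_{\Phi_0 \pi(\lambda)^*} F = \Phi_0 \pi(\lambda)^* F = \mathcal{V}_{\Phi_0} F(\lambda),$$
which is exactly the sampled operator STFT. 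Now I invoke Proposition \ref{Vpw}: since $0 < p \leq 2$, the space $\ell^p(\Lambda;\HS)$ embeds continuously into $\ell^2(\Lambda;\HS)$, so Proposition \ref{Vpw} gives $\Co^p(\mathcal{G}) = V^p(\mathcal{G}) = \{F \in \HS : C_{\widetilde{\mathcal{G}}} F \in \ell^p(\Lambda;\HS)\}$ — in particular the co-orbit space is a genuine subspace of $\HS$, not merely an abstract completion. Finally, Theorem \ref{isometrycor} gives the norm equivalence $\|C_{\widetilde{\mathcal{G}}} F\|_{\ell^p(\Lambda;\HS)} \asymp \|C_{\mathcal{G}} F\|_{\ell^p(\Lambda;\HS)} = \|(G_\lambda F)_{\lambda \in \Lambda}\|_{\ell^p(\Lambda;\HS)}$, and by that same theorem the latter is equivalent to $\|F\|_{\Co^p(\mathcal{G})}$, which in turn is equivalent to $\|F\|_{\mathfrak{M}^p(\mathbb{R}^d)}$ by Theorem \ref{mainchar} (for $p \geq 1$) or by definition (for $p < 1$). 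Chaining these equivalences yields both the set identity and the norm statement. Using the description with $(G_\lambda F)$ rather than $(\widetilde{G}_\lambda F)$ is legitimate precisely because of the mutual localization in Theorem \ref{duallocalized} which underlies Theorem \ref{isometrycor}.

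The only subtlety — and the one place I would be careful — is the interface between the two regimes $p \geq 1$ and $p < 1$: for $p \geq 1$ one must make sure that the $\Co^p(\mathcal{G})$ of Theorem \ref{mainchar} is literally the $\Co^p(\mathcal{G})$ of Definition \ref{Hpwdef} with the $\nu_0$-moderate weight $m \equiv 1$ (it is, by construction in Section \ref{Sec:Mpm}), and for $p < 1$ one must note that Definition \ref{Mpdef} \emph{defines} $\mathfrak{M}^p$ to be $\Co^p(\mathcal{G})$, so there is nothing to prove about the identity itself, only about the $V^p$-description and the norm equivalence, both of which come from Proposition \ref{Vpw} and Theorem \ref{isometrycor} uniformly in $p \in (0,2]$. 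So I do not expect a genuine obstacle here; the corollary is essentially a bookkeeping consequence, and the proof is two or three lines citing Theorem \ref{mainchar}, Theorem \ref{isometrycor}, and Proposition \ref{Vpw}, together with the one-line identity $G_\lambda F = \mathcal{V}_{\Phi_0} F(\lambda)$.
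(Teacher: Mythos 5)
Your proposal is correct and follows exactly the route the paper intends: the corollary is stated there as a direct consequence of Theorem \ref{mainchar} (together with Definition \ref{Mpdef} for $p<1$), Theorem \ref{isometrycor}, and Proposition \ref{Vpw}, combined with the identity $G_\lambda F=\Phi_0\pi(\lambda)^*F=\mathcal{V}_{\Phi_0}F(\lambda)$, which is precisely the chain of citations you give. Your attention to the $\ell^p\hookrightarrow\ell^2$ embedding for $p\le 2$ and to the seam between the regimes $p\ge 1$ and $p<1$ is exactly the right bookkeeping.
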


Theorem \ref{samecoorbitspaces} already hints that the g-frame $\mathcal{G}$ (respectively the Gaussian state $\Phi_0$) may be replaced by other g-frames $\mathcal{W}$ (respectively other operator windows $\Psi$) in Theorem \ref{mainchar}. This is indeed the case, as the following result shows, yielding further options of operator dictionaries for the sparse approximation of operators, see Section \ref{Approximation theoretic results}. However, be wary that here the g-frame condition is an assumption (c.f. Remark \ref{framesets}).  

\begin{theorem}\label{mainchar2}
Let $s>2d+r$ for some $r\geq 0$ and $\Psi$ be an operator of the form
$$\Psi= \sum_{m=1}^{\infty} \phi_m \otimes \psi_m ,$$
where $\phi_m \in M_{\nu_s}^{1}(\mathbb{R}^d)$ and $\psi_m \in L^2(\mathbb{R}^d)$ for all $m\in \mathbb{N}$ and
$$\sum_{m=1}^{\infty} \Vert \phi_m \Vert_{M_{\nu_s}^{1}} \Vert \psi_m \Vert_{L^{2}} <\infty .$$
Assume that $(\Psi^* \pi(\lambda)^*)_{\lambda \in \Lambda}$ is a g-frame for $L^2(\mathbb{R}^d)$, where $\Lambda$ is a lattice as in Theorem \ref{polydecayprop}. Then 
$$\mathcal{W} = (W_{\lambda})_{\lambda \in \Lambda}, \qquad W_{\lambda} := L_{\Psi^* \pi(\lambda)^*} \quad (\lambda \in \Lambda)$$
is a g-frame for $\HS(L^2(\mathbb{R}^d))$ which is intrinsically $s$-localized. Furthermore, $\mathcal{G}$ and $\mathcal{W}$ are mutually $s$-localized (where $\mathcal{G}$ denotes the g-frame from Theorem \ref{polydecayprop}), and for all $\nu_r$-moderate weights $m$ and all $\frac{2d}{s-r}<p\leq \infty$ it holds
$$\mathfrak{M}_m^p(\mathbb{R}^d) = \Co_m^p(\mathcal{W})$$
with equivalent norms.
\end{theorem}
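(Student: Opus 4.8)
The plan is to decompose the statement into three parts and prove them in sequence, reusing the machinery from Section~\ref{Sec:Mpm} almost verbatim with $\Phi_0$ replaced by $\Psi$. \textbf{Part 1: $\mathcal{W}$ is a g-frame for $\HS$.} By hypothesis $(\Psi^*\pi(\lambda)^*)_{\lambda\in\Lambda}$ is a g-frame for $L^2(\mathbb{R}^d)$, and one first checks that $\Psi\in\HS(L^2(\mathbb{R}^d))$ (this follows from $\sum_m\|\phi_m\|_{M^1_{\nu_s}}\|\psi_m\|_{L^2}<\infty$, since $M^1_{\nu_s}\hookrightarrow L^2$, so $\sum_m\|\phi_m\|_{L^2}\|\psi_m\|_{L^2}<\infty$ and the series converges absolutely in $\HS$). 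Hence each $\Psi^*\pi(\lambda)^*\in\HS$, and the Lifting Property (Proposition~\ref{HilversusHS}(i)) immediately gives that $\mathcal{W}=(L_{\Psi^*\pi(\lambda)^*})_{\lambda\in\Lambda}$ is a g-frame for $\HS(L^2(\mathbb{R}^d))$ with the same bounds.

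\textbf{Part 2: localization estimates.} For intrinsic $s$-localization we need $\|W_\lambda W_\mu^*\|_{\B(\HS)}\leq C(1+|\lambda-\mu|)^{-s}$. As in the proof of Theorem~\ref{polydecayprop} we bound $\|W_\lambda W_\mu^*\|_{\B(\HS)}=\|L_{\Psi^*\pi(\lambda)^*\pi(\mu)\Psi}\|_{\B(\HS)}\leq\|\Psi^*\pi(\lambda)^*\pi(\mu)\Psi\|_{\HS}$, and up to the phase factor from $\pi(\lambda)^*\pi(\mu)$ this equals $\|\Psi^*\pi(\mu-\lambda)\Psi\|_{\HS}$ (for suitable orientation). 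Expanding $\Psi=\sum_m\phi_m\otimes\psi_m$ and using the rank-one calculus, $\Psi^*\pi(z)\Psi=\sum_{m,n}\langle\pi(z)\phi_m,?\rangle$-type terms; more cleanly, $\|\Psi^*\pi(z)\Psi\|_{\HS}\leq\sum_{m,n}|\langle\pi(z)\phi_m,\phi_n\rangle|\,\|\psi_m\|_{L^2}\|\psi_n\|_{L^2}$, but a sharper route is to recognize that $\mathcal{V}_\Psi\Psi(z)=\Psi^*\pi(z)^*\Psi$ and that $\Psi\in\mathfrak{M}^1_{\nu_s}(\mathbb{R}^d)$ — which follows from the hypothesis, since $\|\Psi\|_{\mathfrak{M}^1_{\nu_s}}\leq\sum_{m,n}\|\phi_m\|_{M^1_{\nu_s}}\|\phi_n\|_{M^1_{\nu_s}}\|\psi_m\|_{L^2}\|\psi_n\|_{L^2}$ via the standard estimate $\|(\phi_m\otimes\psi_m)^*\pi(z)^*(\phi_n\otimes\psi_n)\|_{\HS}=|V_{\phi_n}\phi_m(z)|\,\|\psi_m\|\|\psi_n\|$ and $\|V_{\phi_n}\phi_m\|_{L^1_{\nu_s}}\lesssim\|\phi_m\|_{M^1_{\nu_s}}\|\phi_n\|_{M^1_{\nu_s}}$ (convolution relation for STFTs, \cite{grochenigtfa}). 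Then $\mathcal{V}_\Psi\Psi\in L^1_{\nu_s}$ together with its continuity and the fact that $L^1_{\nu_s}$-membership of a continuous function does not by itself give pointwise decay — so instead I would invoke that $\Psi\in\mathfrak{M}^1_{\nu_s}$ implies $\mathcal{V}_{\Phi_0}\Psi\in L^1_{\nu_s}$, hence by the operator-STFT convolution/correspondence results in \cite{doelumcskr24} the function $z\mapsto\|\Psi^*\pi(z)^*\Psi\|_{\HS}=|\mathcal{V}_\Psi\Psi(z)|$ satisfies $\mathcal{V}_\Psi\Psi\in W(L^\infty,L^1_{\nu_s})$ (a Wiener-amalgam estimate), which does give the pointwise bound $\|\Psi^*\pi(z)^*\Psi\|_{\HS}\leq C_s(1+|z|)^{-s}$ after sampling. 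For mutual $s$-localization with $\mathcal{G}$, the same computation gives $\|G_\lambda W_\mu^*\|_{\B(\HS)}\leq\|\Phi_0\pi(\lambda)^*\pi(\mu)\Psi\|_{\HS}=|\mathcal{V}_\Psi\Phi_0(\pm(\lambda-\mu))|$ up to a phase, and $\mathcal{V}_\Psi\Phi_0\in W(L^\infty,L^1_{\nu_s})$ by the same reasoning since both $\Phi_0$ and $\Psi$ lie in $\mathfrak{M}^1_{\nu_s}$.

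\textbf{Part 3: identification of coorbit spaces.} Given Parts 1--2, the conclusion $\mathfrak{M}_m^p(\mathbb{R}^d)=\Co_m^p(\mathcal{W})$ for $\frac{2d}{s-r}<p\leq\infty$ is almost immediate: by Theorem~\ref{mainchar} we have $\mathfrak{M}_m^p(\mathbb{R}^d)=\Co_m^p(\mathcal{G})$ in the Banach range $1\leq p\leq\infty$, and by Definition~\ref{Mpdef} this identity holds by fiat for $0<p<1$; meanwhile $\mathcal{G}$ and $\mathcal{W}$ are both intrinsically $s$-localized and mutually $s$-localized, so Theorem~\ref{samecoorbitspaces} yields $\Co_m^p(\mathcal{G})=\Co_m^p(\mathcal{W})$ with equivalent norms for all $\frac{2d}{s-r}<p\leq\infty$. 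Chaining the two equalities finishes the proof; the norm equivalence is inherited directly, so no appeal to the bounded inverse theorem is needed here (unlike in Theorem~\ref{mainchar}, where it was).

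\textbf{Main obstacle.} The delicate point is Part~2: extracting \emph{pointwise} off-diagonal decay of order $s$ from the \emph{integrability} hypothesis $\phi_m\in M^1_{\nu_s}$. A naive argument only gives $\mathcal{V}_\Psi\Psi\in L^1_{\nu_s}$, which is not enough; one genuinely needs the Wiener-amalgam upgrade $\mathcal{V}_\Psi\Psi\in W(L^\infty,L^1_{\nu_s})$ (valid because the operator STFT of a $\mathfrak{M}^1_{\nu_s}$-operator against a $\mathfrak{M}^1_{\nu_s}$-window lands in the amalgam space — the operator analogue of the classical fact $V_g f\in W(L^\infty,L^1_{\nu_s})$ for $f,g\in M^1_{\nu_s}$). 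Once that is in hand, sampling on the relatively separated lattice $\Lambda$ and the $\nu_s$-moderateness give \eqref{slocalized} and \eqref{mutually}. I would cite the relevant Wiener-amalgam / correspondence estimates from \cite{doelumcskr24} and \cite{grochenigtfa} rather than reprove them, and note that the summability hypothesis $\sum_m\|\phi_m\|_{M^1_{\nu_s}}\|\psi_m\|_{L^2}<\infty$ is exactly what makes the double sum defining $\|\Psi\|_{\mathfrak{M}^1_{\nu_s}}$ converge, closing the loop.
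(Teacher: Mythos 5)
Your proposal is correct in outline and its skeleton matches the paper's: establish $\Psi\in\HS$, lift via Proposition~\ref{HilversusHS} to get the g-frame property of $\mathcal{W}$, prove intrinsic and mutual $s$-localization, and then conclude by chaining Theorem~\ref{samecoorbitspaces} with Theorem~\ref{mainchar} and Definition~\ref{Mpdef} --- your Part~3 is essentially verbatim the paper's final step, including the observation that no bounded-inverse argument is needed. The genuine difference is in how the localization estimates are obtained. The paper stays entirely at the scalar level: it expands $\Vert W_\lambda W_\mu^*\Vert_{\B(\HS)}$ in an orthonormal basis into a quadruple sum over the rank-one components of $\Psi$, reduces everything to $\sum_{m,k}\vert V_{\phi_k}\phi_m(\lambda-\mu)\vert$, and bounds each term by $C_2\,\nu_s(\lambda-\mu)^{-1}\Vert\phi_m\Vert_{M^1_{\nu_s}}\Vert\phi_k\Vert_{M^1_{\nu_s}}$ using the pointwise domination $\vert V_{\phi_k}\phi_m\vert\le\vert V_{\varphi_0}\phi_m\vert\ast\vert V_{\phi_k}\varphi_0\vert$, weighted Young, and the embedding $M^1_{\nu_s}\hookrightarrow M^\infty_{\nu_s}$; this is self-contained and only uses classical facts from \cite{gr01}. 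You instead work at the operator level, showing $\Psi\in\mathfrak{M}^1_{\nu_s}(\mathbb{R}^d)$ and invoking an amalgam-type upgrade $\mathcal{V}_\Psi\Psi\in W(L^\infty,L^1_{\nu_s})$ from \cite{doelumcskr24}. That route works and is more conceptual, but two remarks: (a) the amalgam detour is heavier than necessary --- the window-independence of $\mathfrak{M}^p_m$ for windows in $\mathfrak{M}^1_{\nu_s}$ together with the inclusion $\mathfrak{M}^1_{\nu_s}\subset\mathfrak{M}^\infty_{\nu_s}$ (both quoted in the preliminaries) already gives $\Vert\Psi^*\pi(z)^*\Psi\Vert_{\HS}\lesssim(1+\vert z\vert)^{-s}$ pointwise, which is exactly what you need; and (b) your argument outsources the key estimate to results on operator modulation spaces whose precise form (amalgam norms, arbitrary $\mathfrak{M}^1_{\nu_s}$-windows on both sides) you would have to verify in the reference, whereas the paper deliberately avoids that dependence. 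Two cosmetic points: $\Vert\Psi\Vert_{\mathfrak{M}^1_{\nu_s}}$ is controlled by the single sum $\sum_n\Vert\phi_n\Vert_{M^1_{\nu_s}}\Vert\psi_n\Vert_{L^2}$ (the window is $\Phi_0$, not $\Psi$), not a double sum; and $\mathcal{V}_\Psi\Phi_0(z)$ is $\HS$-valued, so the quantity decaying in your mutual-localization estimate is $\Vert\mathcal{V}_\Psi\Phi_0(\lambda-\mu)\Vert_{\HS}$ rather than an absolute value.
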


\begin{proof}
Let $\Psi$ and $\Lambda$ be as above. Let $C_1$ be the constant associated with the continuous embedding $M^1_{\nu_s}(\mathbb{R}^d) \hookrightarrow L^2(\mathbb{R}^d)$ and $C_2$ be the constant associated with the continuous embedding $M^1_{\nu_s}(\mathbb{R}^d) \hookrightarrow M^{\infty}_{\nu_s}(\mathbb{R}^d)$ (see Proposition \ref{Mpprop}). Upon a rescaling argument we may assume W.L.O.G. that $\Vert \psi_m \Vert_{L^2} = 1$ for all $m\in \mathbb{N}$ so that $C_3 := \sum_{m=1}^{\infty} \Vert \phi_m \Vert_{M_{\nu_s}^{1}} < \infty$. 

\noindent \textbf{Step 1.} We first prove that $\Psi$ is a Hilbert-Schmidt operator on $L^2(\mathbb{R}^d)$. Indeed, let $(e_n)_{n=1}^{\infty}$ be an orthonormal basis for $L^2(\mathbb{R}^d)$. Then 
\begin{flalign}
\Vert \Psi \Vert_{\HS(L^2)}^2 &= \sum_{n=1}^{\infty} \left\Vert \sum_{m=1}^{\infty} \langle e_n, \psi_m \rangle \phi_m  \right\Vert_{L^2}^2 \notag \\
&= \sum_{n=1}^{\infty} \sum_{m=1}^{\infty} \sum_{m'=1}^{\infty} \langle e_n, \psi_m \rangle \langle \psi_{m'}, e_n \rangle \langle \phi_m , \phi_{m'} \rangle \notag \\
&= \sum_{m=1}^{\infty} \sum_{m'=1}^{\infty} \sum_{n=1}^{\infty}\langle e_n, \psi_m \rangle \langle \psi_{m'}, e_n \rangle \langle \phi_m , \phi_{m'} \rangle \notag \\
&= \sum_{m=1}^{\infty} \sum_{m'=1}^{\infty} \langle \psi_{m'}, \psi_m \rangle \langle \phi_m , \phi_{m'} \rangle \notag \\
&\leq C_1^2\sum_{m=1}^{\infty} \sum_{m'=1}^{\infty} \Vert \phi_{m'} \Vert_{M^1_{\nu_s}} \Vert \phi_m \Vert_{M^1_{\nu_s}} \Vert \psi_{m'} \Vert_{L^2} \Vert \psi_m \Vert_{L^2} \notag \\
&=  C_1^2 C_3^2, \notag
\end{flalign}
where the order of summation may be changed due to absolute convergence of the series 
\begin{flalign}
&\sum_{m=1}^{\infty} \sum_{m'=1}^{\infty} \sum_{n=1}^{\infty} \vert \langle e_n, \psi_m \rangle \vert \vert \langle \psi_{m'}, e_n \rangle \vert \vert \langle \phi_m , \phi_{m'} \rangle \vert \notag \\
&\leq \sum_{m=1}^{\infty} \sum_{m'=1}^{\infty} \vert \langle \phi_m , \phi_{m'} \rangle \vert \left( \sum_{n=1}^{\infty} \vert \langle e_n, \psi_m \rangle \vert^2 \right)^{\frac{1}{2}} \left( \sum_{n=1}^{\infty} \vert \langle \psi_{m'}, e_n \rangle \vert^2 \right)^{\frac{1}{2}} \notag \\
&= \sum_{m=1}^{\infty} \sum_{m'=1}^{\infty} \vert \langle \phi_m , \phi_{m'} \rangle \vert \leq C_1^2 C_3^2 . \notag
\end{flalign}

\noindent \textbf{Step 2.} We show that $\mathcal{W}$ is a g-frame for $\HS(L^2(\mathbb{R}^d))$. Since $\Psi \in \HS(L^2(\mathbb{R}^d))$ according to Step 1, we also have $\Psi^*\pi(\lambda)^* \in \HS(L^2(\mathbb{R}^d))$ for each $\lambda \in \Lambda$. Therefore, since $(\Psi^* \pi(\lambda)^*)_{\lambda \in \Lambda}$ is a g-frame for $L^2(\mathbb{R}^d)$ by assumption, we may conclude from the lifting property (Proposition \ref{HilversusHS}) that $(L_{\Psi^* \pi(\lambda)^*})_{\lambda \in \Lambda}$ is a g-frame for $\HS(L^2(\mathbb{R}^d))$.

\noindent \textbf{Step 3.}
We show that
$$\vert V_{\phi_k} \phi_m (\lambda - \mu) \vert \leq  \frac{C_2}{\nu_s(\lambda - \mu)} \Vert \phi_m \Vert_{M^{1}_{\nu_s}} \Vert \phi_k \Vert_{M^1_{\nu_s}} \qquad (\forall k,m \in \mathbb{N}).$$
Indeed, since $\Vert \varphi_0 \Vert_{L^2}=1$, the STFT satisfies the pointwise estimate 
$$\vert V_{\phi_k} \phi_m \vert \leq \frac{1}{\Vert \varphi_0 \Vert_{L^2}^2} \vert V_{\varphi_0} \phi_m \vert \ast \vert V_{\phi_k} \varphi_0 \vert = \vert V_{\varphi_0} \phi_m \vert \ast \vert V_{\phi_k} \varphi_0 \vert$$ 
(see \cite[Theorem 11.3.7]{gr01}). Applying Young's inequality for weighted $L^p$-spaces \cite[Proposition 11.1.3]{gr01} we see that 
\begin{flalign}
\vert V_{\phi_k} \phi_m (\lambda - \mu) \vert &= \frac{1}{\nu_s(\lambda - \mu)}\vert V_{\phi_k} \phi_m (\lambda - \mu) \vert \nu_s(\lambda - \mu) \notag \\
&\leq \frac{1}{\nu_s(\lambda - \mu)}\Vert V_{\phi_k} \phi_m \Vert_{L^{\infty}_{\nu_s}} \notag \\
&\leq \frac{1}{\nu_s(\lambda - \mu)} \Vert V_{\varphi_0} \phi_m \Vert_{L^{\infty}_{\nu_s}} \Vert V_{\phi_k} \varphi_0 \Vert_{L^1_{\nu_s}} \notag \\
&=\frac{1}{\nu_s(\lambda - \mu)}\Vert V_{\varphi_0} \phi_m \Vert_{L^{\infty}_{\nu_s}} \Vert V_{\varphi_0} \phi_k \Vert_{L^1_{\nu_s}} \notag \\
&\leq \frac{C_2}{\nu_s(\lambda - \mu)}\Vert \phi_m \Vert_{M^{1}_{\nu_s}} \Vert \phi_k \Vert_{M^1_{\nu_s}}. \notag
\end{flalign}

\noindent \textbf{Step 4.} We show that $\mathcal{W}$ is intrinsically $s$-localized. We start estimating
\begin{flalign}
&\Vert W_{\lambda} W_{\mu}^*\Vert_{\B(\HS(L^2))}^2 \notag \\
&= \Vert L_{\Psi^*\pi(\lambda)^*\pi(\mu)\Psi}\Vert_{\B(\HS(L^2))}^2 \notag \\
&\leq \Vert \Psi^* \pi(\lambda)^* \pi(\mu) \Psi\Vert_{\HS(L^2)}^2 \notag \\    
&= \sum_{n=1}^{\infty} \left\Vert \left( \sum_{k=1}^{\infty} \psi_k \otimes \phi_k \right) \pi(\lambda)^* \sum_{m=1}^{\infty} \langle e_n, \psi_m \rangle \pi(\mu) \phi_m \right\Vert_{L^2}^2 \notag \\
&= \sum_{n=1}^{\infty} \left\Vert  \sum_{m=1}^{\infty} \sum_{k=1}^{\infty} \langle e_n, \psi_m \rangle \langle \pi(\mu) \phi_m, \pi(\lambda) \phi_k \rangle \psi_k \right\Vert_{L^2}^2 \notag \\
&= \sum_{n=1}^{\infty} \left\langle \sum_{m,k=1}^{\infty} \langle \pi(\mu)\phi_m, \pi(\lambda)\phi_k \rangle \langle e_n, \psi_{m} \rangle \psi_{k}, \sum_{m', k'=1}^{\infty} \langle \pi(\mu)\phi_{m'}, \pi(\lambda)\phi_{k'}\rangle \langle e_n, \psi_{m'} \rangle \psi_{k'} \right\rangle \notag \\
&= \sum_{n=1}^{\infty} \sum_{m,m',k,k' =1}^{\infty} \langle \pi(\mu)\phi_m, \pi(\lambda)\phi_k \rangle \langle e_n, \psi_{m} \rangle \langle  \pi(\lambda)\phi_{k'}, \pi(\mu)\phi_{m'} \rangle \langle \psi_{m'}, e_n \rangle \langle \psi_{k}, \psi_{k'} \rangle \notag \\
&= \sum_{m,m',k,k' =1}^{\infty} \langle \pi(\mu)\phi_m, \pi(\lambda)\phi_k \rangle \langle  \pi(\lambda)\phi_{k'}, \pi(\mu)\phi_{m'} \rangle \langle \psi_{k}, \psi_{k'} \rangle 
\sum_{n=1}^{\infty}  \langle e_n, \psi_{m} \rangle  \langle \psi_{m'}, e_n \rangle  \notag \\
&= \sum_{m,m',k,k' =1}^{\infty} \langle \pi(\mu)\phi_m, \pi(\lambda)\phi_k \rangle  \langle  \pi(\lambda)\phi_{k'}, \pi(\mu)\phi_{m'} \rangle \langle \psi_{k}, \psi_{k'} \rangle \langle \psi_{m'}, \psi_{m} \rangle \notag \\
&\leq \sum_{m,m',k,k' =1}^{\infty} \vert \langle \pi(\mu)\phi_m, \pi(\lambda)\phi_k \rangle \vert \vert \langle  \pi(\lambda)\phi_{k'}, \pi(\mu)\phi_{m'} \rangle \vert \notag \\
&= \left( \sum_{m=1}^{\infty} \sum_{k=1}^{\infty}\vert V_{\phi_k} \pi(\mu)\phi_m (\lambda) \vert \right)^2 ,\notag
\end{flalign}
where changing the order of summation is justified as in Step 1 via absolute convergence of the sum in question. By the covariance property of the STFT \cite{grochenigtfa} and Step 3, we may conclude from the latter that
\begin{flalign}
\Vert W_{\lambda} W_{\mu}^*\Vert_{\B(\HS(L^2))} &\leq \sum_{m=1}^{\infty} \sum_{k=1}^{\infty}\vert V_{\phi_k} \pi(\mu)\phi_m (\lambda) \vert \notag \\
&= \sum_{m=1}^{\infty} \sum_{k=1}^{\infty}\vert V_{\phi_k} \phi_m (\lambda - \mu) \vert \notag \\
&\leq \frac{C_2}{\nu_s(\lambda - \mu)} \sum_{m=1}^{\infty} \sum_{k=1}^{\infty} \Vert \phi_m \Vert_{M^{1}_{\nu_s}} \Vert \phi_k \Vert_{M^1_{\nu_s}} \notag \\
&= \frac{C_2 C_3^2}{\nu_{s}(\lambda - \mu)}, \notag 
\end{flalign}
as was to be shown. 

\noindent \textbf{Step 5.} We show that $\mathcal{G}$ and $\mathcal{W}$ are mutually $s$-localized. As in Step 4, we have \begin{flalign}
\Vert G_{\lambda} W_{\mu}^*\Vert_{\B(\HS(L^2))}^2 &\leq \sum_{n=1}^{\infty} \left\Vert  \sum_{m=1}^{\infty} \langle e_n, \psi_m \rangle \langle \pi(\mu) \phi_m, \pi(\lambda) \varphi_0 \rangle \varphi_0 \right\Vert_{L^2}^2 \notag \\
&= \sum_{n=1}^{\infty} \sum_{m,m'=1}^{\infty} \langle \pi(\mu)\phi_m, \pi(\lambda)\varphi_0 \rangle \langle e_n, \psi_{m} \rangle \langle  \pi(\lambda)\varphi_0, \pi(\mu)\phi_{m'} \rangle \langle \psi_{m'}, e_n \rangle \notag \\
&= \sum_{m,m' =1}^{\infty} \langle \pi(\mu)\phi_m, \pi(\lambda)\varphi_0 \rangle  \langle  \pi(\lambda)\varphi_0, \pi(\mu)\phi_{m'} \rangle  \langle \psi_{m'}, \psi_{m} \rangle \notag \\
&\leq \left( \sum_{m=1}^{\infty} \vert V_{\varphi_0} \pi(\mu)\phi_m (\lambda) \vert \right)^2 \notag
\end{flalign}
and therefore
$$\Vert G_{\lambda} W_{\mu}^*\Vert_{\B(\HS(L^2))} \leq \sum_{m=1}^{\infty} \vert V_{\varphi_0} \phi_m (\lambda - \mu) \vert \leq \frac{C_2 C_3}{\nu_{s}(\lambda - \mu)},$$
as was to be shown. 

\noindent \textbf{Step 6.} By Step 4 and the preliminary results from Subsection \ref{Polynomially localized g-frames}, the co-orbit spaces $\Co_m^p(\mathcal{W})$ ($\frac{2d}{s-r} <p \leq \infty$) are well-defined and non-trivial. Since, by Step 5, $\mathcal{G}$ and $\mathcal{W}$ are mutually $s$-localized, we may conclude from Theorem \ref{samecoorbitspaces} that $\Co_m^p(\mathcal{W}) = \Co_m^p(\mathcal{G})$ with equivalent norms ($\frac{2d}{s-r}<p \leq \infty$). The last part of the statement now follows from Theorem \ref{mainchar} and Definition \ref{Mpdef}.
\end{proof}

\begin{remark}\label{specialpsi} 
1.) An operator $\Psi$ as in the above theorem is a so-called \emph{nuclear operator} $\Psi \in \mathcal{N}(L^2(\mathbb{R}^d);M^1_{\nu_s}(\mathbb{R}^d))$ as considered e.g. in \cite{doelumcskr24}, where it was shown that $\mathcal{N}(L^2(\mathbb{R}^d);M^1_{\nu_s}(\mathbb{R}^d)) \subset \mathfrak{M}^1_{\nu_s}(\mathbb{R}^d) \subset \HS(L^2(\mathbb{R}^d))$. Furthermore, in \cite[Theorem 6.8]{doelumcskr24}, sufficiency conditions for $(\Psi^*\pi(\lambda)^*)_{\lambda \in \Lambda}$ being a g-frame for $L^2(\mathbb{R}^d)$ are given in terms of its \emph{oscillation kernel}. 

2.) If $\Psi$ in the above theorem has finite rank, that is, $\Psi= \sum_{m=1}^{K} \phi_m \otimes \psi_m$ with $\phi_m \in M_{\nu_s}^{1}(\mathbb{R}^d)$ and $\psi_m \in L^2(\mathbb{R}^d)$ ($m=1, \dots, K$), then the assumption $(\Psi^* \pi(\lambda)^*)_{\lambda \in \Lambda}$ being a g-frame for $L^2(\mathbb{R}^d)$ is equivalent to $\lbrace \phi_1, \dots , \phi_K \rbrace$ generating a multi-window Gabor frame for $L^2(\mathbb{R}^d)$ over the lattice $\Lambda$. In particular, if the functions $\phi_1 = \dots = \phi_K =: \phi$ coincide, this amounts to $(\pi(\lambda)\phi)_{\lambda \in \Lambda}$ being a Gabor frame for $L^2(\mathbb{R}^d)$. 

3.) In view of the approximation results in the upcoming Section \ref{Approximation theoretic results}, we prefer the localization parameter $s$ to be large, since larger $s$ amounts to smaller possible $p$ in the definition of $\Co_m^p(\mathcal{W})$. Therefore, $\phi_m \in S(\mathbb{R}^d)$ ($m\in \mathbb{N}$) being Schwartz functions is the optimal choice in this regard, since they imply polynomial decay of arbitrary order. In this case we obtain Theorem \ref{mainchar2} (as well as Corollary \ref{Mpeasy} with $\Psi$ instead of $\Phi_0$) for the full range of $0<p\leq 2$. 
\end{remark}

\begin{remark}\label{differentanalysisoperators}
We explicitly clarify the relation between the several different analysis operators encountered so far. More precisely, let $\Phi \in \mathfrak{M}^1_{\nu_s}(\mathbb{R}^d)$ and assume that $\mathcal{U} = (U_{\lambda})_{\lambda \in \Lambda} = (L_{\Phi^*\pi(\lambda)^*})_{\lambda\in \Lambda}$ is a g-frame for $\HS(L^2(\mathbb{R}^d))$. Then, by Theorem \ref{mainchar2}, the analysis operator $C_{\Phi}$ from Proposition \ref{analbound} coincides with the (g-frame) analysis operator $C_{\mathcal{U}}$ introduced in Subsection \ref{Some background on g-frames and localization} and their respective action on $F\in \mathfrak{M}_m^p(\mathbb{R}^d)$ equals the operator STFT $\mathcal{V}_{\Phi} F$ sampled on $\Lambda$, that is, 
$$C_{\Phi} F = C_{\mathcal{U}} F = (U_{\lambda}F)_{\lambda \in \Lambda} =  (\Phi^*\pi(\lambda)^*F)_{\lambda \in \Lambda} = (\mathcal{V}_{\Phi} F(\lambda))_{\lambda\in \Lambda}.$$
\end{remark}

We are going to look at two examples of operator classes, whose operator short-time Fourier transform (STFT) admits an intuitive interpretation. As we will see, these operators also satisfy structural conditions that lead to accurate approximation using only a small number of operator building blocks. In other words, their most significant operator STFT coefficients yield sparse and effective representations.

\section{Examples}\label{Sec:Exa1} 
We give two examples to illustrate the nature of the introduced transforms. 
\begin{example}\label{Ex1}
 Let \( H_\eta \) be a linear operator on \( L^2(\mathbb{R}) \) defined via its Fourier-Wigner transform   \( \eta  = \mathcal{F}_W (H)\in L^2(\mathbb{R}^2) \), i.e. we formally write
\[
H_\eta = \int_{\mathbb{R}^2} \eta(z) \pi(z) \, dz,
\]
where \( z = (x,\xi) \in \mathbb{R}^2 \), and if we assume \( \eta \) is compactly supported in a set \( \Omega_\eta \subset \mathbb{R}^2 \), the integral is well-defined (e.g., in the weak operator topology).

The interesting observation about this operator is the fact, that it has infinite rank. In this sense, it is particularly interesting to consider its approximation by different operator expansion. 

We fix  a window   \( \varphi_0 \in L^2(\mathbb{R}) \)  (typically a Gaussian) and analyze \( H_\eta \) with respect to the rank-one analysis operator window \( \Phi_0 = \varphi_0\otimes\varphi_0\):
\[
\mathcal{V}_{\Phi_0}H_\eta(\lambda) = C_{\Phi_0}(H_\eta)(\lambda) = \varphi_0 \otimes H_\eta^* \pi(\lambda) \cdot \varphi_0
\]
Applying this operator to a test function \( \psi \in L^2(\mathbb{R}) \), we obtain:
\[[C_{\Phi_0}(H_\eta)(\lambda)]\,\psi
= \left\langle  H_\eta \psi,\pi(\lambda) \varphi_0 \right\rangle \varphi_0.\]
Setting \( V_{\varphi_0} \psi(z) = \langle \psi, \pi(z) \varphi_0 \rangle \), then under mild assumptions (e.g., Schwartz functions), we can write the inner product as:
\[\left\langle  H_\eta \psi,\pi(\lambda) \varphi_0 \right\rangle = \int_{\mathbb{R}^2} \eta(z) \langle \psi ,\pi(z)^* \pi(\lambda) \varphi_0\rangle \, dz 
= \int_{\mathbb{R}^2} \eta(z) V_{\varphi_0} \psi(\lambda - z) e^{-2\pi i (x (l-\xi))} \, dz,\]
Writing this as a twisted convolution
we obtain 
\(C_{\Phi_0}(H_\eta)(\lambda)\psi=[\eta \,\natural\, V_{\varphi_0} \psi](\lambda) \cdot \varphi_0\)
and each coefficient operator \( C_{\Phi_0}(H_\eta)(\lambda) \) is a rank-one operator with  Hilbert–Schmidt norm 
\[
\|C_{\Phi_0}(H_\eta)(\lambda)\|_{\mathcal{HS}} 
= \sup_{\|\psi\|_{L^2}=1} \left| [\eta \,\natural\, V_{\varphi_0} \psi](\lambda) \right|.
\]

Thus, the operator STFT is pointwise bounded by the twisted convolution of the spreading function \( \eta \) with the STFT of \( \psi \), maximized over unit-norm signals.
If \( \eta \) is smooth and compactly supported, the twisted convolution with \( V_{\varphi_0} \psi \) is smooth and localized. 
In Figure~\ref{Fig:CompSpread1}, we show two instances of 
$\mathcal{V}_{\Phi}F(\lambda)$ for an operator $F = H_\eta$, which is also depicted in its matrix form. The last plot shows the norms \(\|C_{\Phi_0}(H_\eta)(\lambda)\|_{\mathcal{HS}} \) for all $\lambda\in\Lambda$.
\begin{figure}
    \centering
\includegraphics[width=0.9\linewidth]{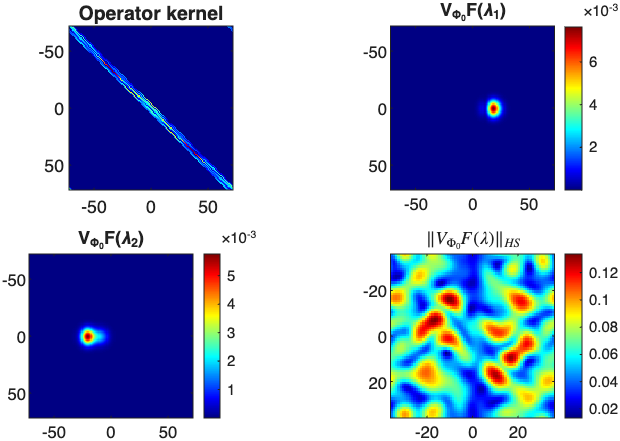}
    \caption{Operator STFT of an underspread operator $H_\eta$: Upper left plot shows the operator kernel of an operator $F = H_\eta$, upper right and lower left show two instances of the operators
$\mathcal{V}_{\Phi}F(\lambda_i)$ and the lower right  plot shows the norms \(\|C_{\Phi_0}(H_\eta)(\lambda)\|_{\mathcal{HS}} \) on a sub-sampled version of  $\Lambda$.}
    \label{Fig:CompSpread1}
\end{figure}


In a next step, we compute this explicitly for a Gaussian spreading function \( \eta \):
\(\eta(x,\xi) = e^{-\pi (x^2 + \xi^2)}\).
The resulting operator $H_\eta =  \int_{\mathbb{R}^2} e^{-\pi |z|^2} \pi(z)\, dz$ is  trace-class operator with rapidly decaying spectrum.
Using a rank-one analysis operator with a Gaussian window \( \varphi_0 \in \mathcal{S}(\mathbb{R}) \), normalized in \( L^2 \), we obtain:
\[
C_{\Phi_0}(H_\eta)(\lambda)\psi(x) = \left[\eta \,\natural\, V_{\varphi_0} \psi\right](\lambda) \cdot \varphi_0(x),
\]
 with Hilbert–Schmidt norm of this rank-one operator is:
\[
\|C_{\Phi_0}(H_\eta)(\lambda)\|_{\mathcal{HS}} 
= \sup_{\|\psi\|=1} \left| \eta \,\natural\, V_{\varphi_0} \psi(\lambda) \right|.
\]

Since both \( \eta \) and \( V_{\varphi_0} \psi \) are Gaussians, the twisted convolution \( \eta \,\natural\, V_{\varphi_0} \psi \) is also a Gaussian, centered near the origin.

Therefore, \( \|C_{\Phi_0}(H_\eta)(\lambda)\|_{\mathcal{HS}} \) decays exponentially in \( |\lambda| \),  and we expect  sparsity of the operator representation.
\end{example}

\begin{remark}
The supremum
\[
\|C_{\Phi_0}(H_\eta)(\lambda)\|_{\mathcal{HS}} = \sup_{\|\psi\|=1} \left| \eta \natural V_{\varphi_0} \psi(\lambda) \right|
\]
is in general not attained by a specific choice of $\psi$. However, when both $\eta$ and the window $\varphi_0$ are Gaussians, the short-time Fourier transform $V_{\varphi_0} \varphi_0$ is also a Gaussian, and in this case the supremum is attained at $\psi = \varphi_0$, up to a phase. Therefore, for Gaussian operators and windows, we have
\[
\|C_{\Phi_0}(H_\eta)(\lambda)\|_{\mathcal{HS}} = |\eta \natural V_{\varphi_0} \varphi_0 (\lambda)|.
\]    
\end{remark}

\begin{example}\label{Ex2}
The next class of operators we consider consists of time-frequency localization operators, whose spreading function is given by  $\eta_m = \mathcal{M} \cdot V_g g $ with $\mathcal{M}  =  \mathcal{F}_s(m)$, where $\mathcal{F}_s$ denotes the symplectic Fourier transform. For a window $g$, the localization operator can be written as convolution, cf.~\cite{skrett18}, so that 
\(
H_{m} = m \star \Gamma \quad (\Gamma = g \otimes g)
\)
and since
\(
 C_{\Phi_0}(H_{m})(\lambda) = \varphi_0 \otimes H_{m} \pi(\lambda) \varphi_0,
\)
we have 
\[
 \|C_{\Phi_0}(H_{m})(\lambda)\|_{{\mathcal{HS}}} = \|H_{m} \pi(\lambda ) \varphi_0\|_{L^2} \leq \left\| m \cdot T_\lambda V_g \varphi_0 \right\|_{L^2}
\]
This means, if $g$ is TF-localized,  here, then the relevant coefficients should be given by $\lambda\in\Lambda$, for which  $|m(\lambda )|$ has higher values. 
A numerical example verifies the observation: 
in Figure~\ref{Fig:CompGabMul1} we show the operator kernel, the mask $m$, which localizes energy in two quadratic domains of phase space and  the norms \(\|C_{\Phi_0}(H_m)(\lambda)\|_{\mathcal{HS}} \) for all $\lambda\in\Lambda$.
\begin{figure}
    \centering
\includegraphics[width=0.9\linewidth]{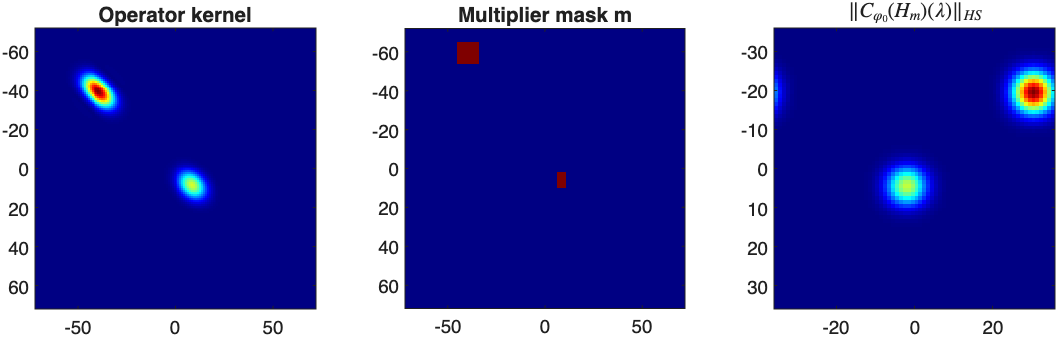}
    \caption{Operator STFT of Localization operator $H_m$.}
    \label{Fig:CompGabMul1}
\end{figure}

\end{example}
\section{Approximation theoretic results}\label{Approximation theoretic results}

Here we define dictionaries of bounded operators on $\HS(L^2(\mathbb{R}^d))$ and their associated approximation spaces and relate them to the operator coorbit spaces $\mathfrak{M}^p$. 

\subsection{Operator dictionaries and approximation spaces}

For simplicity and generality, we first introduce operator dictionaries and their associated approximation spaces for an abstract Hilbert space $\Hil$.

\begin{definition}\label{defdictionary}
A countable family $\mathcal{U} = (U_{\lambda})_{\lambda \in \Lambda}$ in $\B(\Hil)$ is called an \emph{operator dictionary} for $\Hil$, if 
\begin{equation}\label{dictionary}
    D_{\mathcal{U}} (c_{00}(\Lambda;\Hil)) = \left\lbrace \sum_{\lambda \in \Lambda} U_{\lambda}^* g_{\lambda} : (g_{\lambda})_{\lambda \in \Lambda}\in c_{00}(\Lambda;\Hil) \right\rbrace
\end{equation}
is dense in $\Hil$.
\end{definition}

\begin{example}\label{gframedic}
Every g-frame $\mathcal{U} = (U_{\lambda})_{\lambda \in \Lambda}$ for $\Hil$ is an operator dictionary for $\Hil$. Indeed, this follows immediately from $D_{\mathcal{U}}:\ell^2(\Lambda;\Hil)\longrightarrow \Hil$ being bounded and surjective and $c_{00}(\Lambda;\Hil)$ being dense in $\ell^2(\Lambda;\Hil)$.    
\end{example}

Closely related to an operator dictionary $\mathcal{U}$ are the sets $\Sigma_m(\mathcal{U})$ ($m\in \mathbb{N}_0$) defined below.

\begin{definition}\label{defsigmam}
Let $\mathcal{U} = (U_{\lambda})_{\lambda \in \Lambda}$ be an operator dictionary for $\Hil$. Then for every $m\in \mathbb{N}$, the set $\Sigma_m(\mathcal{U})$ is defined as
\begin{equation}
\Sigma_m(\mathcal{U}) := \left\lbrace \sum_{\lambda \in K} U_{\lambda}^* g_{\lambda} : g_\lambda \in \Hil \, (\forall \lambda \in \Lambda), \, K \subset \Lambda, \, \vert K\vert \leq m \right\rbrace .
\end{equation}
We also define $\Sigma_0(\mathcal{U}):= \lbrace 0_{\Hil} \rbrace$.
\end{definition}

\begin{definition}\label{mtermapproxerror}
Given an operator dictionary $\mathcal{U} = (U_{\lambda})_{\lambda \in \Lambda}$ for $\Hil$, $f\in \Hil$ 
and $m\in \mathbb{N}_0$, the \emph{best $m$-term approximation error} of $f$ is defined by 
$$\sigma_m(f,\mathcal{U}) := \inf_{h\in \Sigma_m(\mathcal{U})} \Vert f-h\Vert_{\Hil}.$$
Furthermore, for $0<\alpha < \infty$ and $0<p\leq \infty$, we define the \emph{approximation space} $\A_p^{\alpha}(\mathcal{U})$ by
$$\A_p^{\alpha}(\mathcal{U}) := \left\lbrace f\in \Hil: \Vert f \Vert_{\A_p^{\alpha}(\mathcal{U})} := \left( \sum_{m=1}^{\infty} \big( m^{\alpha} \sigma_{m-1}(f,\mathcal{U}) \big)^p \frac{1}{m} \right)^{\frac{1}{p}} < \infty \right\rbrace$$
and analogously $\A_{\infty}^{\alpha}(\mathcal{U})$ via the natural modification 
$$\Vert f \Vert_{\A_{\infty}^{\alpha}(\mathcal{U})} := \sup_{m\in \mathbb{N}} m^{\alpha} \sigma_{m-1}(f,\mathcal{U}) $$
in the case $p=\infty$.
\end{definition}

The following basic properties are easy to verify, hence we omit the proof.

\begin{lemma}\label{basicproperties}
Let $\mathcal{U} = (U_{\lambda})_{\lambda \in \Lambda}$ be an operator dictionary for $\Hil$. Then for $f,g\in \Hil$, $\lambda \in \mathbb{C}$ and $m\in \mathbb{N}_0$ the following hold:
\begin{itemize}
    \item[(i)] $\Sigma_m(\mathcal{U}) \subset \Sigma_{m+1}(\mathcal{U})$;
    \item[(ii)] $\lambda \Sigma_m(\mathcal{U}) = \Sigma_m(\mathcal{U})$ in case $\lambda \neq 0$;
    \item[(iii)] $\Sigma_{m}(\mathcal{U})+\Sigma_{m}(\mathcal{U}) \subset \Sigma_{2m}(\mathcal{U})$;
    \item[(iv)] $\sigma_{m+1}(f,\mathcal{U}) \subset \sigma_{m}(f,\mathcal{U})$;
    \item[(v)] $\sigma_{m}(\lambda f,\mathcal{U}) = \vert \lambda \vert \sigma_{m}(f,\mathcal{U})$;
    \item[(vi)] $\sigma_{2m}(f+g,\mathcal{U}) \leq \sigma_{m}(f,\mathcal{U}) + \sigma_{m}(g,\mathcal{U})$.
\end{itemize}
\end{lemma}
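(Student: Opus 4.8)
The plan is to verify all six items directly from Definitions \ref{defsigmam} and \ref{mtermapproxerror}, establishing the three statements about the sets $\Sigma_m(\mathcal{U})$ first, namely (i), (iii) and (ii), and then deducing the three statements about the numbers $\sigma_m(f,\mathcal{U})$, namely (iv), (v) and (vi). Throughout I write the scalar appearing in (ii) and (v) as $c\in\mathbb{C}$ to avoid a clash with the running index $\lambda\in\Lambda$, and I read the relation in (iv) as the inequality $\sigma_{m+1}(f,\mathcal{U})\le\sigma_m(f,\mathcal{U})$ between non-negative reals.

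For (i): if $h=\sum_{\lambda\in K}U_\lambda^*g_\lambda$ with $|K|\le m$, then picking any $\lambda_0\in\Lambda$ and setting $g_{\lambda_0}:=0$ when $\lambda_0\notin K$ exhibits $h=\sum_{\lambda\in K\cup\{\lambda_0\}}U_\lambda^*g_\lambda$ with $|K\cup\{\lambda_0\}|\le m+1$, so $h\in\Sigma_{m+1}(\mathcal{U})$; the case $m=0$ is covered since $0=\sum_{\lambda\in\emptyset}U_\lambda^*g_\lambda\in\Sigma_1(\mathcal{U})$. For (iii): given $h_1=\sum_{\lambda\in K_1}U_\lambda^*g_\lambda$ and $h_2=\sum_{\lambda\in K_2}U_\lambda^*g'_\lambda$ with $|K_1|,|K_2|\le m$, extend both coefficient families by $0$ to $K:=K_1\cup K_2$; then $h_1+h_2=\sum_{\lambda\in K}U_\lambda^*(g_\lambda+g'_\lambda)$ and $|K|\le 2m$, so $h_1+h_2\in\Sigma_{2m}(\mathcal{U})$. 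For (ii): the map $h=\sum_{\lambda\in K}U_\lambda^*g_\lambda\mapsto c\,h=\sum_{\lambda\in K}U_\lambda^*(c\,g_\lambda)$ preserves the support set $K$, so $c\,\Sigma_m(\mathcal{U})\subseteq\Sigma_m(\mathcal{U})$, and applying the same with $c^{-1}$ gives equality when $c\neq 0$.

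The remaining three items follow formally. Item (iv) is immediate from (i), since the infimum defining $\sigma_{m+1}(f,\mathcal{U})$ runs over the larger set $\Sigma_{m+1}(\mathcal{U})\supseteq\Sigma_m(\mathcal{U})$. For (v) with $c\neq 0$, substituting $h=c\,h'$ in $\sigma_m(cf,\mathcal{U})=\inf_{h\in\Sigma_m(\mathcal{U})}\|cf-h\|_{\Hil}$ is legitimate by (ii) and yields $|c|\inf_{h'\in\Sigma_m(\mathcal{U})}\|f-h'\|_{\Hil}=|c|\,\sigma_m(f,\mathcal{U})$; the case $c=0$ holds because $0\in\Sigma_m(\mathcal{U})$. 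For (vi), fix $\varepsilon>0$ and choose $h_1,h_2\in\Sigma_m(\mathcal{U})$ with $\|f-h_1\|_{\Hil}\le\sigma_m(f,\mathcal{U})+\varepsilon$ and $\|g-h_2\|_{\Hil}\le\sigma_m(g,\mathcal{U})+\varepsilon$; by (iii) we have $h_1+h_2\in\Sigma_{2m}(\mathcal{U})$, and the triangle inequality gives $\sigma_{2m}(f+g,\mathcal{U})\le\|(f+g)-(h_1+h_2)\|_{\Hil}\le\sigma_m(f,\mathcal{U})+\sigma_m(g,\mathcal{U})+2\varepsilon$, so letting $\varepsilon\downarrow 0$ finishes the argument. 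Every step here is a single line of bookkeeping, so there is no genuine obstacle; the only things to keep track of are the notational overload on $\lambda$ (scalar versus index) and the conventions that $\Sigma_0(\mathcal{U})=\{0_{\Hil}\}$ and that the empty sum equals $0_{\Hil}$.
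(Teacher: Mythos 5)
Your proof is correct. The paper actually omits the proof entirely (``The following basic properties are easy to verify, hence we omit the proof''), and your direct verification from Definitions \ref{defsigmam} and \ref{mtermapproxerror} is exactly the routine bookkeeping the authors have in mind; you also sensibly repair the two notational slips in the statement (the clash between the scalar $\lambda$ and the index $\lambda\in\Lambda$, and the ``$\subset$'' in (iv) that should be ``$\leq$'').
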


Having established Lemma \ref{basicproperties}, the following result follows easily from a general principle \cite[p.234]{DeVore1993ConstructiveA}. For more details we also refer to the master's thesis \cite[Lemma 3.5]{hol22}, where the special case of rank one operators $U_{\lambda}$ is treated.

\begin{proposition}\label{contembedding}
For each $0<\alpha <\infty$ and each $0<p\leq \infty$, $\A_p^{\alpha}(\mathcal{U})$ is a linear subspace of $\Hil$ and $\Vert \cdot \Vert_{\A_p^{\alpha}(\mathcal{U})}$ defines a quasi-norm on $\A_p^{\alpha}(\mathcal{U})$.
\end{proposition}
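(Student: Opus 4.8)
The plan is to verify the two standard requirements for $\A_p^{\alpha}(\mathcal{U})$ to be a quasi-normed linear space: (a) $\A_p^{\alpha}(\mathcal{U})$ is closed under addition and scalar multiplication, and (b) $\Vert \cdot \Vert_{\A_p^{\alpha}(\mathcal{U})}$ is positively homogeneous, vanishes exactly on $0$, and satisfies a quasi-triangle inequality with some constant $C\geq 1$. All of this follows mechanically from Lemma \ref{basicproperties} together with the structure of the weighted $\ell^p$ sum defining the norm, so the argument is really a bookkeeping exercise; the only mildly delicate point is tracking the quasi-triangle constant through the dyadic-type splitting, which I expect to be the main (very minor) obstacle.

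First I would handle homogeneity and definiteness. For $\lambda\in\CC\setminus\{0\}$, Lemma \ref{basicproperties}(v) gives $\sigma_{m-1}(\lambda f,\mathcal{U})=|\lambda|\,\sigma_{m-1}(f,\mathcal{U})$ for every $m$, and pulling the constant $|\lambda|$ out of the $\ell^p$ (or sup) expression yields $\Vert \lambda f\Vert_{\A_p^{\alpha}(\mathcal{U})}=|\lambda|\,\Vert f\Vert_{\A_p^{\alpha}(\mathcal{U})}$; the case $\lambda=0$ is trivial since $0\in\Sigma_0(\mathcal{U})$ forces $\sigma_{m-1}(0,\mathcal{U})=0$. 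Conversely, if $\Vert f\Vert_{\A_p^{\alpha}(\mathcal{U})}=0$ then every term vanishes, so in particular $\sigma_0(f,\mathcal{U})=\inf_{h\in\Sigma_0(\mathcal{U})}\Vert f-h\Vert_{\Hil}=\Vert f\Vert_{\Hil}=0$, hence $f=0$.

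Next I would establish the quasi-triangle inequality, which simultaneously shows $\A_p^{\alpha}(\mathcal{U})$ is closed under addition. Given $f,g\in\A_p^{\alpha}(\mathcal{U})$, apply Lemma \ref{basicproperties}(vi) in the form $\sigma_{2m}(f+g,\mathcal{U})\leq \sigma_m(f,\mathcal{U})+\sigma_m(g,\mathcal{U})$, and combine it with the monotonicity (iv) to bound $\sigma_{2m-1}(f+g,\mathcal{U})$ and $\sigma_{2m-2}(f+g,\mathcal{U})$ by $\sigma_{m-1}(f,\mathcal{U})+\sigma_{m-1}(g,\mathcal{U})$. Splitting the defining sum over $\A_p^{\alpha}(\mathcal{U})$ into even and odd indices, reindexing, and using the elementary inequality $(a+b)^p\leq 2^p(a^p+b^p)$ (or $(a+b)^{\min(1,p)}\leq a^{\min(1,p)}+b^{\min(1,p)}$, whichever is cleanest for the chosen normalization) together with the comparability of the weights $m^{\alpha p}/m$ at indices $m$ versus $2m$, one arrives at $\Vert f+g\Vert_{\A_p^{\alpha}(\mathcal{U})}\leq C_{\alpha,p}\big(\Vert f\Vert_{\A_p^{\alpha}(\mathcal{U})}+\Vert g\Vert_{\A_p^{\alpha}(\mathcal{U})}\big)$ for a finite constant $C_{\alpha,p}$ depending only on $\alpha$ and $p$; the case $p=\infty$ is the same argument with the sup in place of the sum. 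In particular this shows $f+g\in\A_p^{\alpha}(\mathcal{U})$, so together with the homogeneity from the previous step $\A_p^{\alpha}(\mathcal{U})$ is a linear subspace of $\Hil$ and $\Vert\cdot\Vert_{\A_p^{\alpha}(\mathcal{U})}$ is a genuine quasi-norm. Since this is exactly the abstract mechanism described in \cite[p.~234]{DeVore1993ConstructiveA}, I would cite that reference for the details of the constant and only sketch the splitting here.
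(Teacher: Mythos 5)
Your proof is correct and follows exactly the general principle from \cite[p.~234]{DeVore1993ConstructiveA} that the paper itself invokes (the paper simply cites that reference and Lemma~\ref{basicproperties} without writing out the details). Your sketch supplies precisely the missing bookkeeping — homogeneity and definiteness via $\sigma_0(f,\mathcal{U})=\Vert f\Vert_{\Hil}$, and the quasi-triangle inequality via $\sigma_{2m}(f+g,\mathcal{U})\leq\sigma_m(f,\mathcal{U})+\sigma_m(g,\mathcal{U})$ together with monotonicity and the even/odd reindexing — so there is nothing to correct.
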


The next result is shown as in \cite[p.234]{DeVore1993ConstructiveA} as well.

\begin{proposition}
Suppose that either $\alpha > \beta >0$ or $\alpha = \beta >0$ and $0< p\leq q \leq \infty$. Then $\A_p^{\alpha}(\mathcal{U})$ is continuously embedded in $\A_{q}^{\beta}(\mathcal{U})$, i.e. there exists $C>0$ such that 
$$\Vert f\Vert_{\A_{q}^{\beta}(\mathcal{U})} \leq C \Vert f\Vert_{\A_p^{\alpha}(\mathcal{U})} \qquad (\forall f\in \Hil).$$
\end{proposition}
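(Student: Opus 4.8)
The statement is a standard embedding between approximation spaces of Jackson–Bernstein type, and the reference to \cite[p.234]{DeVore1993ConstructiveA} tells us exactly which general machinery to invoke. The plan is to verify that the family $(\Sigma_m(\mathcal{U}))_{m\in\mathbb{N}_0}$ satisfies the abstract axioms of an \emph{approximation scheme} (sometimes called a linear, or in this case nonlinear, approximation family), and then to quote the general embedding theorem for the associated approximation spaces. The axioms needed are precisely the ones collected in Lemma \ref{basicproperties}: nestedness $\Sigma_m\subset\Sigma_{m+1}$, invariance under nonzero scalars $\lambda\Sigma_m=\Sigma_m$, and the doubling property $\Sigma_m+\Sigma_m\subset\Sigma_{2m}$ (parts (i)--(iii)), together with $\Sigma_0=\{0\}$ and the density of $\bigcup_m\Sigma_m$ in $\Hil$, which holds because $\mathcal{U}$ is an operator dictionary (Definition \ref{defdictionary}) and $\Sigma_m(\mathcal{U})\supset D_{\mathcal{U}}(c_{00})$ for $m$ large on each finitely supported sequence.

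\textbf{Step 1.} Reduce to the two monotonicity statements for the sequence spaces that encode the $\A^\alpha_p$-norms. Writing $a_m:=\sigma_{m-1}(f,\mathcal{U})$, one has $\Vert f\Vert_{\A^\alpha_p(\mathcal{U})}=\big\Vert (m^\alpha a_m)_{m}\big\Vert_{\ell^p(\mathbb{N},dm/m)}$, where $\ell^p(\mathbb{N},dm/m)$ is the weighted (Lorentz-type) sequence space. Since $(a_m)$ is nonincreasing by Lemma \ref{basicproperties}(iv), the claim becomes a purely sequence-space fact: for nonincreasing nonnegative sequences, the quasi-norm $\big\Vert (m^\alpha a_m)\big\Vert_{\ell^p(dm/m)}$ decreases when $\alpha$ decreases, and for fixed $\alpha$ it decreases when the exponent $p$ increases to $q$.

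\textbf{Step 2.} Prove the $p\le q$ case at fixed $\alpha=\beta$. This is the standard $\ell^p\hookrightarrow\ell^q$ inequality for the discrete Lorentz spaces $\ell^p(dm/m)$: if $g_m:=m^\beta a_m$ then because $(a_m)$ is nonincreasing one controls $g_m$ pointwise by a tail average, giving $\sup_m g_m\lesssim \Vert(g_m)\Vert_{\ell^p(dm/m)}$ and hence the full scale of inequalities $\Vert(g_m)\Vert_{\ell^q(dm/m)}\le C\Vert(g_m)\Vert_{\ell^p(dm/m)}$ for $p\le q\le\infty$ by interpolating with the $\ell^\infty$ bound. This is exactly the argument on \cite[p.234]{DeVore1993ConstructiveA} and needs nothing beyond the monotonicity of $\sigma_m$.

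\textbf{Step 3.} Prove the $\alpha>\beta$ case. Here one splits dyadically: on the block $2^k\le m<2^{k+1}$ the quantity $m^\alpha a_m$ dominates $m^\beta a_m$ by a factor $\gtrsim 2^{k(\alpha-\beta)}$ which is summable in $k$ against the $\ell^\infty$ bound obtained in Step 2, and a geometric-series estimate with ratio $2^{-(\alpha-\beta)}<1$ closes the argument; the subordination $p\le q$ for this sub-case is then absorbed via Step 2 applied with exponent $\beta$. The only mild care required is the quasi-triangle constant when $p<1$, which is harmless because all the summations are geometric.

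\textbf{Main obstacle.} There is no real obstacle: every ingredient is either in Lemma \ref{basicproperties} or is the textbook Jackson–Bernstein embedding for approximation spaces. The one point to be careful about is bookkeeping of the quasi-norm constants in the regime $0<p<1$ (the triangle inequality fails, so one works with the $p$-th powers and the $p$-subadditivity), and making sure the abstract theorem from \cite{DeVore1993ConstructiveA} is applied with the correct normalization of the measure $dm/m$ on $\mathbb{N}$. Accordingly, the proof is essentially a citation: \emph{having verified the axioms of Lemma \ref{basicproperties}, the embedding follows from the general theory of approximation spaces \cite[p.~234]{DeVore1993ConstructiveA}.}
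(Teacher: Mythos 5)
Your proposal is correct and takes essentially the same route as the paper, which likewise disposes of this proposition by appealing to the general embedding theorem for approximation spaces in \cite[p.~234]{DeVore1993ConstructiveA}, with the axioms supplied by Lemma \ref{basicproperties}. Your Steps 2 and 3 simply make explicit the standard monotone-sequence/Lorentz-space argument that the cited reference contains, so nothing is missing.
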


\subsection{Best $m$-term approximation}

Next, we apply the results from the previous subsection to the Hilbert space $\HS = \HS(L^2(\mathbb{R}^d))$ with operator dictionary $\mathcal{W}$ given by a g-frame $(W_{\lambda})_{\lambda \in \Lambda}$ as in Theorem \ref{mainchar2}. We call such a dictionary an \emph{admissible dictionary}.

\begin{definition}[Admissible dictionaries]
Let $s>2d$ and $\Psi$ be an operator of the form 
$$\Psi= \sum_{m=1}^{\infty} \phi_m \otimes \psi_m$$
with $\phi_m \in 
M^1_{\nu_s}(\mathbb{R}^d)$, $\psi_m \in L^2(\mathbb{R}^d)$ for all $m\in \mathbb{N}$ and
$$\sum_{m=1}^{\infty} \Vert \phi_m \Vert_{M_{\nu_s}^{1}} \Vert \psi_m \Vert_{L^2} <\infty .$$
Assume that $(\Psi^* \pi(\lambda)^*)_{\lambda \in \Lambda}$ is a g-frame for $L^2(\mathbb{R}^d)$, where $\Lambda$ is the a lattice as in Theorem \ref{polydecayprop}. Then we call 
$$\mathcal{W} = (W_{\lambda})_{\lambda \in \Lambda}, \qquad W_{\lambda} := L_{\Psi^* \pi(\lambda)^*} \quad (\lambda \in \Lambda)$$
an \emph{$s$-admissible dictionary}. In case $\mathcal{W}$ is an $s$-admissible dictionary for all $s>2d$, we call $\mathcal{W}$ an \emph{admissible dictionary}.
\end{definition}

\begin{example}[Examples of admissible dictionaries]
Our toy example of an admissible dictionary is the g-frame $\mathcal{G} = (L_{\Phi_0 \pi(\lambda)^*})_{\lambda \in \Lambda}$ from Theorem \ref{polydecayprop} associated with the Gaussian state $\Phi_0^* = \Phi_0 = \varphi_0 \otimes \varphi_0$. As noted in Remark \ref{specialpsi}, any finite-rank operator $\Psi= \sum_{m=1}^{K} \phi_m \otimes \psi_m$ with $\phi_m \in M^1_{\nu_s}(\mathbb{R}^d)$ and $\psi_m \in L^2(\mathbb{R}^d)$ ($m=1, \dots, K$) such that $\lbrace \phi_1, \dots , \phi_K \rbrace$ generates a multi-window Gabor frame for $L^2(\mathbb{R}^d)$ yields an $s$-admissible dictionary. In particular, if $(\pi(\lambda)\phi)_{\lambda \in \Lambda}$ ($\phi \in S(\mathbb{R}^d)$) is a Gabor frame for $L^2(\mathbb{R}^d)$, $\Psi= \sum_{m=1}^{K} \phi \otimes \psi_m$ yields an $s$-admissible dictionary. If $\phi_1, \dots , \phi_K \in S(\mathbb{R}^d)$, then the latter two are examples of admissible dictionaries.
\end{example}

We follow ideas from \cite{CORDERO200429}, see also \cite{hol22}, to derive results on approximation properties of the spaces $\mathfrak{M}^p(\mathbb{R}^d)$. The following lemma from Stechkin \cite{Ste51} and DeVore and Temlyakov \cite{DeVore1996} is crucial for our analysis.

\begin{lemma}\label{approxtool}
Let $a = \lbrace a_n\rbrace_{n=1}^{\infty}$ be a sequence such that $a_1 \geq a_2 \geq a_3 \geq \dots \geq 0$, let $0<p<2$, and set 
$$\alpha = \frac{1}{p}-\frac{1}{2} \qquad \text{and} \qquad \sigma_m(a) = \left( \sum_{j=m+1}^{\infty} \vert a_j\vert^2\right)^{\frac{1}{2}}.$$
Then there exists a constant $C=C(p)>0$, such that
$$\frac{1}{C} \Vert a \Vert_{\ell^p(\mathbb{N})} \leq \left( \sum_{m=1}^{\infty} \big( m^{\alpha} \sigma_{m-1}(a) \big)^p \frac{1}{m} \right)^{\frac{1}{p}} \leq C  \Vert a \Vert_{\ell^p(\mathbb{N})}.$$
\end{lemma}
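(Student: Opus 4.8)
The plan is to recognise this as the classical Stechkin lemma and to prove the two inequalities separately; throughout I will use the identities $\alpha+\tfrac12=\tfrac1p$ and $\alpha p=1-\tfrac p2\in(0,1)$, which is exactly where the hypothesis $0<p<2$ enters. The lower bound below requires no integrability assumption, so once it is in place the stated inequality is trivially true when $a\notin\ell^p$; for the upper bound I may therefore assume $a\in\ell^p\subset\ell^2$, so that every $\sigma_m(a)$ is finite. Recall $a$ is non-increasing and non-negative.

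For the lower bound I would only use monotonicity of $a$. The block $a_m,a_{m+1},\dots,a_{2m-1}$ has $m$ entries, each at least $a_{2m-1}\geq a_{2m}$, so
$$\sigma_{m-1}(a)^2=\sum_{j\geq m}a_j^2\geq\sum_{j=m}^{2m-1}a_j^2\geq m\,a_{2m-1}^2\geq m\,a_{2m}^2 .$$
Raising to the $p$-th power and inserting $\alpha+\tfrac12=\tfrac1p$ gives $\big(m^{\alpha}\sigma_{m-1}(a)\big)^p\tfrac1m\geq a_{2m-1}^p$, and the same bound holds with $a_{2m}$ in place of $a_{2m-1}$. Summing over $m\geq1$ and adding the two inequalities recovers every index, hence $\|a\|_{\ell^p}^p\leq 2\sum_{m\geq1}\big(m^{\alpha}\sigma_{m-1}(a)\big)^p\tfrac1m$, which is the left-hand inequality.

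For the upper bound a naive estimate such as $\sigma_{m-1}(a)\lesssim m^{-\alpha}\|a\|_{\ell^p}$ is too lossy, since it produces a divergent harmonic series, so the plan is to regroup dyadically twice. First, splitting the outer sum over the blocks $m\in[2^k,2^{k+1})$, using $\sigma_{m-1}(a)\leq\sigma_{2^k-1}(a)$ on each block together with $\sum_{m=2^k}^{2^{k+1}-1}\tfrac1m\leq1$, I reduce matters to bounding $\sum_{k\geq0}2^{k\alpha p}\,\sigma_{2^k-1}(a)^p$. Second, I group the tail $\sigma_{2^k-1}(a)^2=\sum_{j\geq2^k}a_j^2$ again into dyadic blocks and bound each by its first term, $\sum_{j=2^l}^{2^{l+1}-1}a_j^2\leq2^l a_{2^l}^2$, and then use $p/2<1$ to pull the exponent inside the sum, obtaining $\sigma_{2^k-1}(a)^p\leq\sum_{l\geq k}2^{lp/2}a_{2^l}^p$. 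Interchanging the order of summation, the inner geometric sum $\sum_{k=0}^{l}2^{k\alpha p}\lesssim2^{l\alpha p}$ converges precisely because $\alpha p>0$, collapsing the double sum to $\sum_{l\geq0}2^{l\alpha p}2^{lp/2}a_{2^l}^p=\sum_{l\geq0}2^l a_{2^l}^p$. One further use of monotonicity, $2^l a_{2^l}^p\lesssim\sum_{j=2^{l-1}}^{2^l}a_j^p$, bounds this by $\|a\|_{\ell^p}^p$; the leftover $m=1$ term equals $\sigma_0(a)^p=\|a\|_{\ell^2}^p\leq\|a\|_{\ell^p}^p$ since $p\leq2$.

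The one genuinely delicate point will be the upper bound: the argument must extract the decay coming from $a\in\ell^p$ rather than merely from $a\in\ell^2$, and the double dyadic decomposition — on the outer index $m$ and on the tail defining $\sigma_m$ — is exactly what converts the would-be logarithmic loss into convergent geometric series, the convergence resting on $\alpha p=1-p/2\in(0,1)$, i.e. on $0<p<2$. Everything else is routine rearrangement of non-negative series.
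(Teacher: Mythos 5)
Your proof is correct. Note, however, that the paper does not prove Lemma \ref{approxtool} at all: it states it as a known result of Stechkin and DeVore--Temlyakov and cites \cite{Ste51} and \cite{DeVore1996}, so there is no in-paper argument to compare against. What you supply is a complete, self-contained elementary proof of that classical lemma. The lower bound via the block $a_m,\dots,a_{2m-1}$ and the identity $\alpha p+\tfrac p2=1$ is exactly the standard Stechkin-type estimate, and your upper bound correctly navigates the only delicate point: the double dyadic decomposition (on the outer index $m$ and again inside the tail $\sigma_{2^k-1}(a)^2$), combined with the subadditivity of $t\mapsto t^{p/2}$ for $p<2$ and the convergence of $\sum_{k=0}^{l}2^{k\alpha p}\lesssim 2^{l\alpha p}$ from $\alpha p>0$, collapses everything to $\sum_l 2^l a_{2^l}^p\lesssim\|a\|_{\ell^p}^p$. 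All exponent bookkeeping checks out, the interchange of summation is justified by non-negativity, and the reduction to the case $a\in\ell^p$ for the upper bound is handled properly; only trivial cosmetic points remain (the $l=0$ block in the final monotonicity step should be treated separately, and the ``leftover $m=1$ term'' is in fact already covered by the $k=0$ dyadic block).
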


\begin{theorem}\label{approxmain}
Let $\mathcal{W}$ be an $s$-admissible dictionary (respectively admissible dictionary) and $\alpha = \frac{1}{p}-\frac{1}{2}$, where $\frac{2d}{s}<p<2$ (respectively $0<p<2$). Then $\mathfrak{M}^p(\mathbb{R}^d)$ is continuously embedded in $\A_p^{\alpha}(\mathcal{W})$. 
\end{theorem}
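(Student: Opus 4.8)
The plan is to reduce the embedding $\mathfrak{M}^p(\mathbb{R}^d) \hookrightarrow \A_p^\alpha(\mathcal{W})$ to the scalar Stechkin-type estimate of Lemma \ref{approxtool} applied to the rearranged Hilbert-Schmidt norms of the operator STFT coefficients. Concretely, let $F \in \mathfrak{M}^p(\mathbb{R}^d) = \Co_m^p(\mathcal{W})$ (by Theorem \ref{mainchar2}, with trivial weight $m \equiv 1$, valid since $\frac{2d}{s} < p$; or Definition \ref{Mpdef} when $p<1$). Set $c_\lambda := \Vert W_\lambda F\Vert_{\HS} = \Vert \mathcal{V}_\Psi F(\lambda)\Vert_{\HS}$. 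By Theorem \ref{isometrycor} / Corollary \ref{Mpeasy} we have $(c_\lambda)_{\lambda \in \Lambda} \in \ell^p(\Lambda)$ with $\Vert (c_\lambda) \Vert_{\ell^p} \asymp \Vert F \Vert_{\mathfrak{M}^p}$. Let $a = (a_n)_{n=1}^\infty$ be the non-increasing rearrangement of $(c_\lambda)$, so $\Vert a \Vert_{\ell^p(\mathbb{N})} = \Vert (c_\lambda)\Vert_{\ell^p(\Lambda)}$.

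The key step is to bound $\sigma_m(F,\mathcal{W})$ by the scalar tail $\sigma_m(a) = \big(\sum_{j=m+1}^\infty a_j^2\big)^{1/2}$. Fix $m$ and let $K \subset \Lambda$ with $|K| = m$ index the $m$ largest values of $c_\lambda$ (so $\{c_\lambda : \lambda \in K\}$ are the values $a_1,\dots,a_m$, up to ties). Using a dual g-frame reconstruction of $F$ in $\HS$ — namely $F = \sum_{\lambda \in \Lambda} W_\lambda^* \widetilde{W}_\lambda F$ with $\widetilde{\mathcal{W}}$ the (intrinsically $s$-localized, by Theorem \ref{duallocalized}) canonical dual of $\mathcal{W}$ — I would take the approximant $h := \sum_{\lambda \in K} W_\lambda^* \widetilde{W}_\lambda F \in \Sigma_m(\mathcal{W})$. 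Then $F - h = \sum_{\lambda \notin K} W_\lambda^* \widetilde{W}_\lambda F = D_{\mathcal{W}}\big( (\widetilde{W}_\lambda F)_{\lambda \notin K}\big)$, and since $D_{\mathcal{W}} : \ell^2(\Lambda;\HS) \to \HS$ is bounded (g-Bessel bound $\sqrt{B}$), we get $\Vert F - h\Vert_{\HS} \leq \sqrt{B}\,\big(\sum_{\lambda \notin K} \Vert \widetilde{W}_\lambda F\Vert_{\HS}^2\big)^{1/2}$. By the norm equivalence of Theorem \ref{isometrycor} (the $\widetilde{\mathcal{U}}$- and $\mathcal{U}$-coefficients give equivalent $\ell^2$-norms, which applies coordinate-subset-wise as well, or: using mutual localization and the Jaffard-class boundedness of the mixed Gram matrix restricted to the complement of $K$ — this is where one must be a little careful, since the restriction to $\Lambda \setminus K$ is not exactly the Gram operator), one bounds this by $C\big(\sum_{\lambda \notin K} c_\lambda^2\big)^{1/2} = C\,\sigma_m(a)$, because by choice of $K$ the set $\{c_\lambda : \lambda \notin K\}$ consists precisely of $a_{m+1}, a_{m+2}, \dots$.

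Granting $\sigma_m(F,\mathcal{W}) \leq C\,\sigma_m(a)$, the conclusion is immediate: by Lemma \ref{approxtool} with this $p$ and $\alpha = \frac{1}{p} - \frac{1}{2}$,
\[
\Vert F\Vert_{\A_p^\alpha(\mathcal{W})} = \Big(\sum_{m=1}^\infty \big(m^\alpha \sigma_{m-1}(F,\mathcal{W})\big)^p \tfrac{1}{m}\Big)^{1/p} \leq C \Big(\sum_{m=1}^\infty \big(m^\alpha \sigma_{m-1}(a)\big)^p \tfrac{1}{m}\Big)^{1/p} \leq C' \Vert a\Vert_{\ell^p(\mathbb{N})} \asymp \Vert F\Vert_{\mathfrak{M}^p}.
\]

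The main obstacle is the step passing from $\big(\sum_{\lambda \notin K}\Vert \widetilde{W}_\lambda F\Vert_{\HS}^2\big)^{1/2}$ to $C\,\sigma_m(a)$: one wants to replace the dual coefficients $\widetilde{W}_\lambda F$ by the primal ones $W_\lambda F = \mathcal{V}_\Psi F(\lambda)$ (whose sorted values are the $a_n$) while restricting to the index subset $\Lambda \setminus K$. The cleanest route is to instead choose $K$ to be the indices of the $m$ largest $\Vert \widetilde{W}_\lambda F\Vert_{\HS}$, use $h = \sum_{\lambda \in K} W_\lambda^* \widetilde{W}_\lambda F$, get $\sigma_m(F,\mathcal{W}) \leq \sqrt{B}\,\widetilde{\sigma}_m(a)$ where $\widetilde{a}$ is the rearrangement of the \emph{dual} coefficients, and then note $\Vert \widetilde{a}\Vert_{\ell^p} \asymp \Vert a\Vert_{\ell^p} \asymp \Vert F\Vert_{\mathfrak{M}^p}$ by Theorem \ref{isometrycor} (equivalence of $\Co_m^p(\mathcal{W})$ and $\Co_m^p(\widetilde{\mathcal{W}})$ norms) — this sidesteps having to control a subset-restricted Gram operator and only uses the global $\ell^p$-norm equivalence, which is exactly what Theorem \ref{isometrycor} provides. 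I would also double-check that Lemma \ref{approxtool}'s requirement $0<p<2$ is met and that the quasi-norm manipulations are legitimate for $p<1$ (they are, since Lemma \ref{approxtool} is stated for all $0<p<2$).
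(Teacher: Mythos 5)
Your proposal is correct and follows essentially the same route as the paper: greedy truncation of the dual-pair g-frame expansion, $\ell^2$-boundedness of the relevant synthesis operator, Stechkin's inequality (Lemma \ref{approxtool}), and the norm equivalences from Corollary \ref{Mpeasy} and Theorem \ref{isometrycor}. The only difference is that the paper sidesteps your ``main obstacle'' by using the reconstruction $F=\sum_{\lambda}\widetilde{W}_{\lambda}^{*}W_{\lambda}F$ and sorting the \emph{primal} coefficients $\Vert W_{\lambda}F\Vert_{\HS}$, so that the sorted values and the $\ell^2$-tail being estimated are literally the same sequence; your fix (sorting the dual coefficients and synthesizing with $\mathcal{W}$) is the mirror image and works equally well --- with the minor bonus that your approximant $\sum_{\lambda\in K}W_{\lambda}^{*}\widetilde{W}_{\lambda}F$ lies verbatim in $\Sigma_m(\mathcal{W})$ as defined in Definition \ref{defsigmam}.
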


\begin{proof}
By Corollary \ref{Mpeasy}, $\mathfrak{M}^p(\mathbb{R}^d) = \left\lbrace F\in \HS(L^2(\mathbb{R}^d)): (W_{\lambda} F )_{\lambda \in \Lambda} \in \ell^p(\Lambda;\HS)  \right\rbrace$ and $\Vert F \Vert_{\mathfrak{M}^p} \asymp \Vert (W_{\lambda} F)_{\lambda \in \Lambda} \Vert_{\ell^p(\Lambda;\HS)}$. So, let $F\in \mathfrak{M}^p(\mathbb{R}^d)$ with $(W_{\lambda} F)_{\lambda \in \Lambda} \in \ell^p(\Lambda;\HS)$, and let $\tau:\mathbb{N}\longrightarrow \Lambda$ be a bijection so that 
$$\Vert W_{\tau(1)} F \Vert_{\HS} \geq \Vert W_{\tau(2)} F \Vert_{\HS} \geq \Vert W_{\tau(3)} F \Vert_{\HS} \geq \dots \geq 0.$$
For $m\in \mathbb{N}$, set $P_m F :=  \sum_{j=1}^m (\widetilde{W}_{\tau(\lambda)})^* W_{\tau(\lambda)} F$. Since, by g-frame reconstruction, $F= \sum_{\lambda \in \Lambda} (\widetilde{W}_{\lambda})^* W_{\lambda} F$ with unconditional convergence in $\HS$, we may estimate the best $m$-term approximation error of $F$ with respect to $\mathcal{W}$ via 
\begin{flalign}
\sigma_{m}(F,\mathcal{W}) &\leq \Vert F - P_{m}F \Vert_{\HS} \notag \\
&= \left\Vert \sum_{k=m+1}^{\infty} (\widetilde{W}_{\tau(\lambda)})^* W_{\tau(\lambda)} F \right\Vert_{\HS} \notag \\
&\leq \Vert D_{\widetilde{\mathcal{W}}} \Vert_{\B(\ell^2(\Lambda;\HS),\HS)} \left( \sum_{k=m+1}^{\infty} \Vert W_{\tau(\lambda)} F \Vert_{\HS}^2 \right)^{\frac{1}{2}}. \notag 
\end{flalign}
Setting $a_n := \Vert W_{\tau(n)} F \Vert_{\HS}$ and $\sigma_m(a) = \left( \sum_{j=m+1}^{\infty} \vert a_j\vert^2\right)^{\frac{1}{2}}$, we obtain via an application of Lemma \ref{approxtool} that
\begin{flalign}
\Vert F \Vert_{\A_p^{\alpha}(\mathcal{W})} &= \left( \sum_{m=1}^{\infty} \big( m^{\alpha} \sigma_{m-1}(F,\mathcal{W}) \big)^p \frac{1}{m} \right)^{\frac{1}{p}} \notag \\
&\leq \Vert D_{\widetilde{\mathcal{W}}} \Vert_{\B(\ell^2(\Lambda;\HS),\HS)} \left( \sum_{m=1}^{\infty} \big( m^{\alpha} \sigma_{m-1}(a) \big)^p \frac{1}{m} \right)^{\frac{1}{p}} \notag \\
&\leq C' \Vert a \Vert_{\ell^p(\mathbb{N})} \notag \\
&= C' \Vert C_{\mathcal{W}}F \Vert_{\ell^p(\Lambda;\HS)} \notag \\
&\leq C'' \Vert F \Vert_{\mathfrak{M}^p}, \notag
\end{flalign}
where we used the equivalence of the norms $\Vert \cdot \Vert_{\Co^p(\widetilde{\mathcal{W}})} \asymp  \Vert \cdot \Vert_{\Co^p(\mathcal{W})} \asymp \Vert \cdot \Vert_{\mathfrak{M}^p}$ according to Theorems \ref{isometrycor} and \ref{mainchar} in the last step. 
\end{proof}

\begin{remark}
Combining the previous result with Proposition \ref{contembedding} yields 
$$\sup_{m\in \mathbb{N}} m^{\alpha} \sigma_{m-1}(F,\mathcal{W}) = \Vert F\Vert_{\A^{\alpha}_{\infty}(\mathcal{W})} \leq C \Vert F \Vert_{\mathfrak{M}^p}$$
and, therefore, 
$$\sigma_{m}(F,\mathcal{W}) \lesssim \frac{1}{(m+1)^{\alpha}}\Vert F \Vert_{\mathfrak{M}^p} \qquad (m = 0, 1, 2, \dots ).$$
In other words, any operator $F$ from $\mathfrak{M}^p(\mathbb{R}^d)$ (with $\frac{2d}{s}<p<2$) can be approximated by $m$ terms with approximation rate $\alpha = \frac{1}{p}-\frac{1}{2}$. 

In particular, the proof of Theorem \ref{approxmain}
reveals a recipe for achieving this, namely by picking the $m$ largest values of the measurements
$$(\Vert W_{\lambda} F \Vert_{\HS})_{\lambda \in \Lambda} = (\Vert  (\mathcal{V}_{\Psi} F)(\lambda) \Vert_{\HS})_{\lambda \in \Lambda}$$
and compute the associated truncated g-frame reconstruction series $P_m F$.
\end{remark}

\subsection{Sparseness classes}

For mixed-state operators satisfying a mild additional condition, also being an element of the Feichtinger operators, the results in this Section give a sparse description in terms of a representation based on an admissible operator g-frame for the space of Hilbert-Schmidt operators.  
The key to sparse representations of density operators is the extension of \cite[Definition 2]{grib-nielsen2} to the g-frame setting.

\begin{definition}\label{sparsenessclass}
Let $\mathcal{U} =(U_{\lambda})_{\lambda \in \Lambda}$ be a g-frame for a Hilbert space $\Hil$ and suppose that $0<p<\infty$ and a given weight $m$ are given so that $\ell^p_m(\Lambda;\Hil)$ is continuously embedded in $\ell^2(\Lambda;\Hil)$. Then the \emph{sparseness class} $S^p_m(\mathcal{U})$ is defined by  
$$S^p_m(\mathcal{U}) = \left\lbrace f\in \Hil: \exists (g_{\lambda})_{\lambda \in \Lambda}\in \ell_m^p(\Lambda;\Hil): f=\sum_{\lambda \in \Lambda} U_{\lambda}^* g_{\lambda}\right\rbrace$$
and the associated norm $\Vert \cdot \Vert_{S^p_m(\mathcal{U})}$ is defined by
$$\Vert f \Vert_{S^p_m(\mathcal{U})} = \inf\left\lbrace \Vert (g_{\lambda})_{\lambda \in \Lambda} \Vert_{\ell^p_m(\Lambda;\Hil)} : f = \sum_{\lambda \in \Lambda} U_{\lambda}^* g_{\lambda}\right\rbrace .$$
\end{definition}

Once again, we employ the theory of localized g-frames (in the case $\Hil = \HS(L^2(\mathbb{R}^d))$) and show that $S^p_m(\mathcal{W}) = \mathfrak{M}_m^p(\mathbb{R}^d)$ for admissible dictionaries $\mathcal{W}$.

\begin{theorem}
Let $\mathcal{W} =(W_{\lambda})_{\lambda \in \Lambda}$ be an $s$-admissible dictionary (respectively admissible dictionary) and let $\frac{2d}{s}<p<\infty$ (respectively $0<p<\infty$) and $m$ be as in Definition \ref{sparsenessclass}. Then 
$$S^p_m(\mathcal{W}) = \mathfrak{M}_m^p(\mathbb{R}^d)$$
with equivalent norms.
\end{theorem}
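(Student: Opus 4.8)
The plan is to show the two inclusions $S^p_m(\mathcal{W}) \subseteq \mathfrak{M}_m^p(\mathbb{R}^d)$ and $\mathfrak{M}_m^p(\mathbb{R}^d) \subseteq S^p_m(\mathcal{W})$ with control of the respective norms, and then invoke Theorem \ref{mainchar2} (together with Definition \ref{Mpdef}) to identify $\mathfrak{M}_m^p(\mathbb{R}^d)$ with $\Co_m^p(\mathcal{W})$. The key observation is that the defining data of $S^p_m(\mathcal{W})$ — namely representations $F = \sum_{\lambda} W_\lambda^* g_\lambda$ with $(g_\lambda)_{\lambda} \in \ell^p_m(\Lambda;\HS)$ — are precisely the representations in the range of the synthesis operator $D_{\mathcal{W}}$, which by Theorem \ref{Donto} is bounded and surjective from $\ell^p_m(\Lambda;\HS)$ onto $\Co_m^p(\mathcal{W})$, with
$$\Vert F \Vert_{\Co_m^p(\mathcal{W})} \asymp \inf\left\lbrace \Vert (g_\lambda)_{\lambda \in \Lambda} \Vert_{\ell^p_m(\Lambda;\HS)} : F = D_{\mathcal{W}}(g_\lambda)_{\lambda \in \Lambda} \right\rbrace.$$
The right-hand side is exactly the definition of $\Vert F \Vert_{S^p_m(\mathcal{W})}$, so once the two spaces are shown to contain the same elements, the norm equivalence is automatic.

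First I would treat the inclusion $S^p_m(\mathcal{W}) \subseteq \mathfrak{M}_m^p(\mathbb{R}^d)$: if $F \in S^p_m(\mathcal{W})$, pick a witnessing sequence $(g_\lambda)_{\lambda} \in \ell^p_m(\Lambda;\HS)$ with $F = \sum_\lambda W_\lambda^* g_\lambda = D_{\mathcal{W}}(g_\lambda)_\lambda$. By Theorem \ref{Donto}, $D_{\mathcal{W}}$ maps $\ell^p_m(\Lambda;\HS)$ boundedly into $\Co_m^p(\mathcal{W})$, so $F \in \Co_m^p(\mathcal{W}) = \mathfrak{M}_m^p(\mathbb{R}^d)$ (using Theorem \ref{mainchar2} for $p \geq 1$, Definition \ref{Mpdef} for $p<1$), and moreover $\Vert F \Vert_{\mathfrak{M}_m^p} \lesssim \Vert (g_\lambda)_\lambda \Vert_{\ell^p_m}$; taking the infimum over all such representations yields $\Vert F \Vert_{\mathfrak{M}_m^p} \lesssim \Vert F \Vert_{S^p_m(\mathcal{W})}$. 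Here I should note that the hypothesis "$\ell^p_m(\Lambda;\HS)$ continuously embedded in $\ell^2(\Lambda;\HS)$'' guarantees the series $\sum_\lambda W_\lambda^* g_\lambda$ is meaningful (it converges in $\HS$, since $(g_\lambda)_\lambda \in \ell^2(\Lambda;\HS)$ and $D_{\mathcal{W}}$ is bounded on $\ell^2$ by the g-frame property), so $S^p_m(\mathcal{W})$ is well-defined and the two notions of synthesis agree.

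For the reverse inclusion, let $F \in \mathfrak{M}_m^p(\mathbb{R}^d) = \Co_m^p(\mathcal{W})$. By the surjectivity part of Theorem \ref{Donto}, there exists $(g_\lambda)_\lambda \in \ell^p_m(\Lambda;\HS)$ with $F = D_{\mathcal{W}}(g_\lambda)_\lambda = \sum_\lambda W_\lambda^* g_\lambda$, which exhibits $F \in S^p_m(\mathcal{W})$; the norm estimate from Theorem \ref{Donto} gives $\Vert F \Vert_{S^p_m(\mathcal{W})} \lesssim \Vert F \Vert_{\Co_m^p(\mathcal{W})} \asymp \Vert F \Vert_{\mathfrak{M}_m^p}$. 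Combining the two inclusions gives $S^p_m(\mathcal{W}) = \mathfrak{M}_m^p(\mathbb{R}^d)$ with equivalent norms. I expect the only genuine subtlety — rather than a real obstacle — to be bookkeeping around the regime $0<p<1$: one must confirm that Theorem \ref{Donto} and the characterization of $\Co_m^p(\mathcal{W})$ as the completion of $D_{\mathcal{W}}(c_{00}(\Lambda;\HS))$ still apply, which they do because $\mathcal{W}$ is $s$-localized for all $s>2d$ (admissible case) or for the relevant $s$ with $\frac{2d}{s}<p$ ($s$-admissible case), and the constraint on $p$ in the statement is exactly $p > \frac{2d}{s-r}$ with $r=0$ since $m$ is assumed to make $\ell^p_m \hookrightarrow \ell^2$, forcing the relevant weight to be trivial-like at infinity; one should double-check that the embedding hypothesis is compatible with $m$ being $\nu_r$-moderate for the $r$ used in Theorem \ref{mainchar2}, and if necessary restrict to $r=0$ (so $m \lesssim 1$), which is the natural setting for sparseness classes anyway.
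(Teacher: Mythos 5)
Your proof is correct and follows essentially the same route as the paper, whose entire argument is the one-line observation that the claim follows from Theorem \ref{mainchar2} and Theorem \ref{Donto}; you have simply spelled out the details, correctly identifying that $\Vert \cdot \Vert_{S^p_m(\mathcal{W})}$ is exactly the infimum appearing in the norm equivalence of Theorem \ref{Donto}. Your attention to the embedding hypothesis $\ell^p_m(\Lambda;\HS)\hookrightarrow\ell^2(\Lambda;\HS)$ (which, via Proposition \ref{Vpw}, ensures $\Co_m^p(\mathcal{W})$ sits inside $\HS$ so the two notions of synthesis agree) is a worthwhile point that the paper leaves implicit.
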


\begin{proof}
This follows immediately from Theorem \ref{mainchar2} and Theorem \ref{Donto}.
\end{proof}

Operators in the operator Feichtinger algebra $ \mathfrak{M}^{1}(\mathbb{R}^d)$ are trace class operators and those with unit trace are the class of mixed states, for which the preceding theorem guarantees a sparse representation. This seems to be the first result on sparse representations of quantum states, and we will explore its implications for problems in quantum science in future work, such as quantum machine learning and quantum communication. This is going to pave the way to operator version of well-established compressed sensing algorithms for signals, which should be of interest from an applied and pure aspect.

\section{Numerical simulations}

In this section, we first revisit the examples introduced in Section~\ref{Sec:Exa1} and then look at examples obtained from sampling random density matrices, which simulate quantum states. In each case, we discuss if and how the respective density operators fulfill assumptions necessary for a sparse approximation. 

\subsection{Density Operators with compactly supported spreading function }

We revisit Example~\ref{Ex1} and investigate best \( K \)-term approximations of the operator \( H_\eta \) based on its operator STFT coefficients \( C_{\Phi_0}(H_\eta)(\lambda) = \mathcal{V}_{\Phi_0}H_\eta(\lambda)  \). Specifically, we sort the coefficients in decreasing order of their Hilbert--Schmidt norms \(\| C_{\Phi_0}(H_\eta)(\lambda) \|_\HS\) , and retain only the top \( K \) terms.  We denote the index set containing  the indices of the \( K \) largest operator STFT coefficients by  \( \Lambda_m \subset \Lambda \) and obtain   a sparse approximation
\[
H_\eta^{(K)} := \sum_{\lambda \in \Lambda_m} C_{\Phi_0}(H_\eta)(\lambda) \cdot \pi(\lambda)^* \Phi,
\]

We report approximation errors for different values of \( K \in \{20, 30, 100, 500\} \) using the normalized Frobenius norm:
\[
\text{AppErr}_m := \frac{\| H_\eta^{(K)} - H_\eta \|_{\mathrm{F}}}{\| H_\eta \|_{\mathrm{F}}}.
\]
Numerical results demonstrate that even relatively small values of \( K \) yield good approximations due to the sparsity of the spreading function. Figure~\ref{Fig:KtermApp} shows the reconstruction quality and coefficient structure for several values of \( K \).
Observe that the full representation uses $5184$ coefficients.
\begin{figure}[h!]
    \centering
\includegraphics[width=\textwidth]{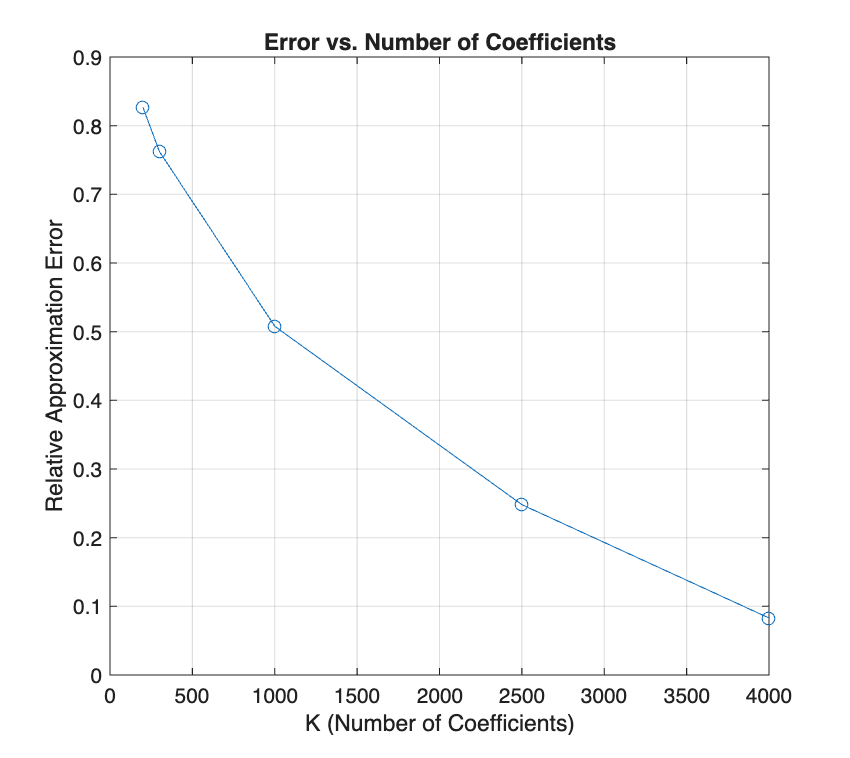}
    \caption{Approximation results using best \(K\)-term expansions of the operator STFT.}
    \label{Fig:KtermApp}
\end{figure}
\begin{figure}[ht]
    \centering
    \subfigure[K=20]{\includegraphics[width=0.3\linewidth]{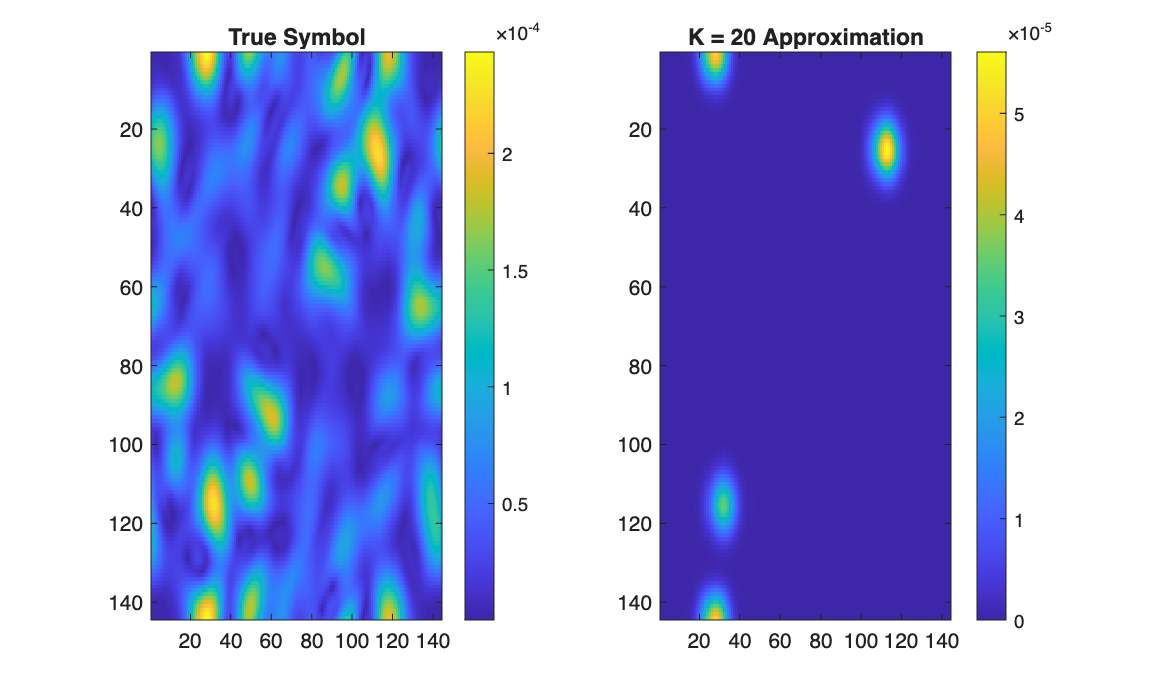}}
    \subfigure[K=100]{\includegraphics[width=0.3\linewidth]{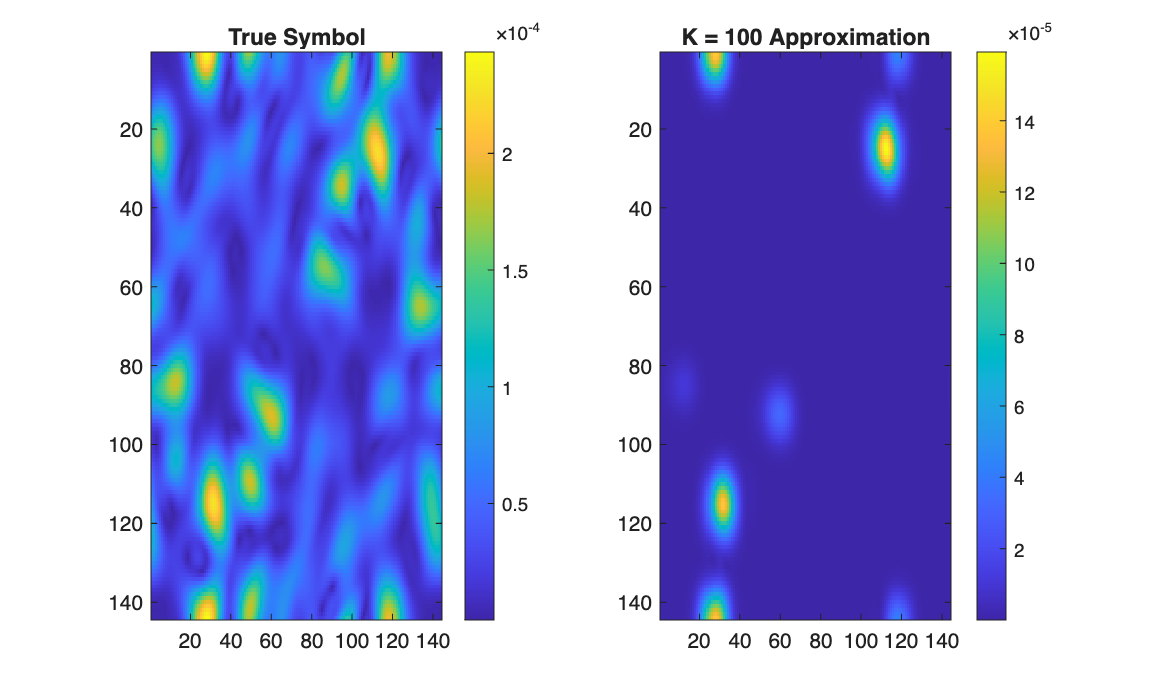}}
    \subfigure[K=200]{\includegraphics[width=0.3\linewidth]{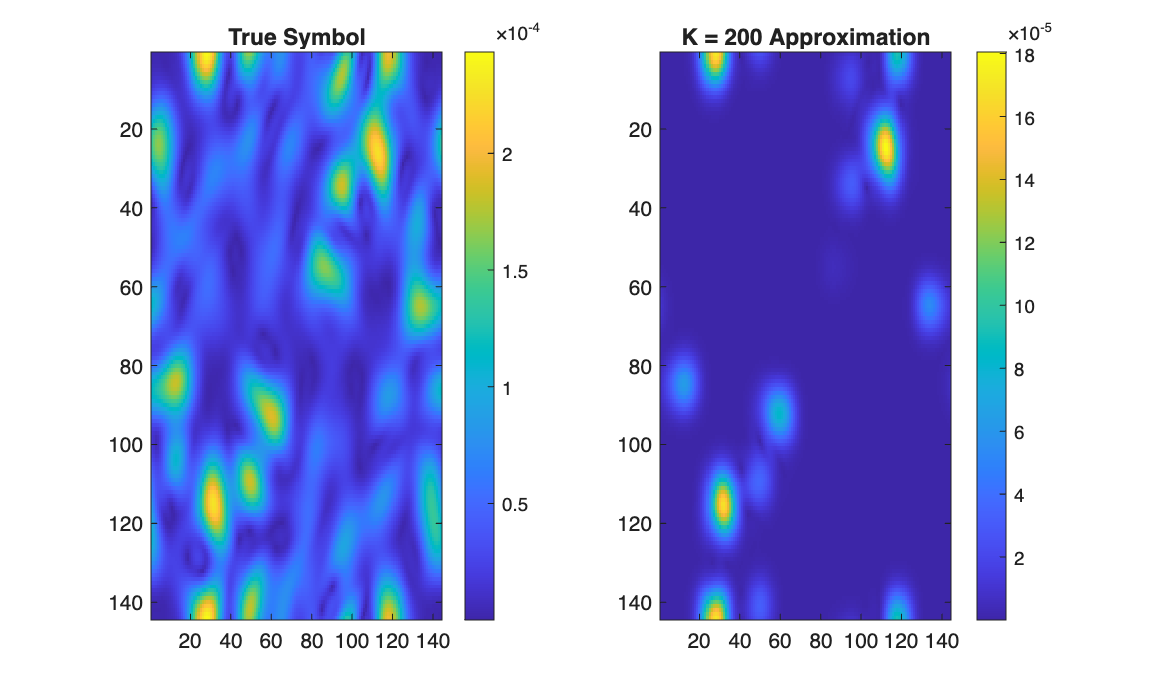}}
    
    \subfigure[K=500]{\includegraphics[width=0.3\linewidth]{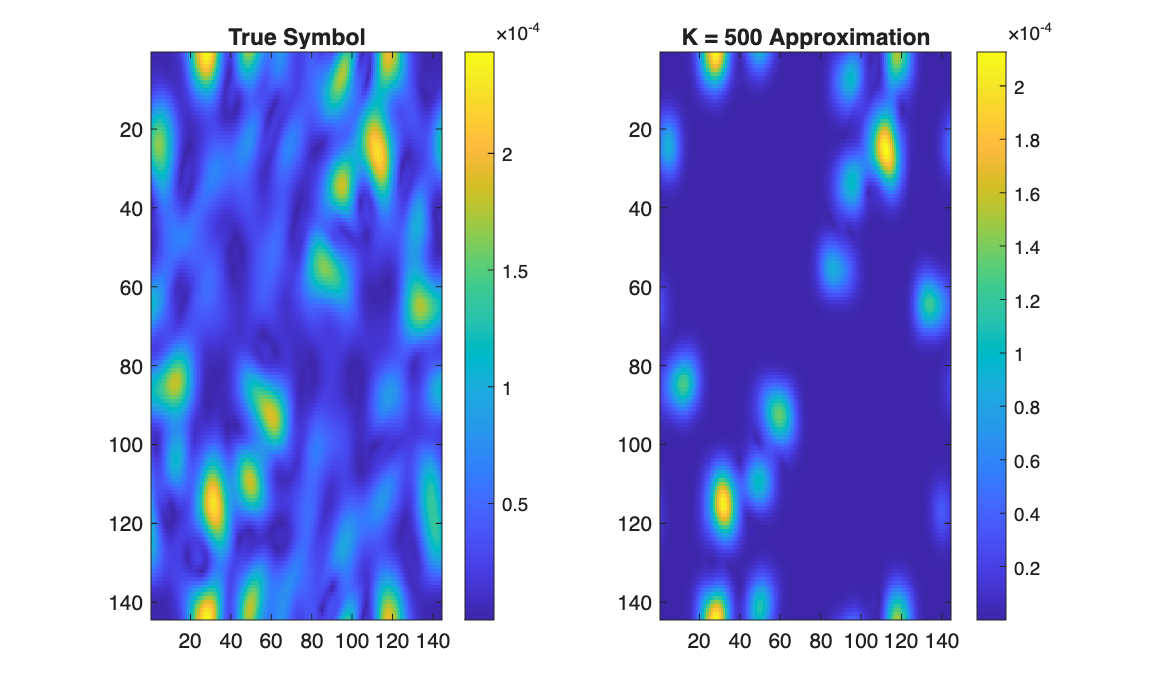}}
    \subfigure[K=1000]{\includegraphics[width=0.3\linewidth]{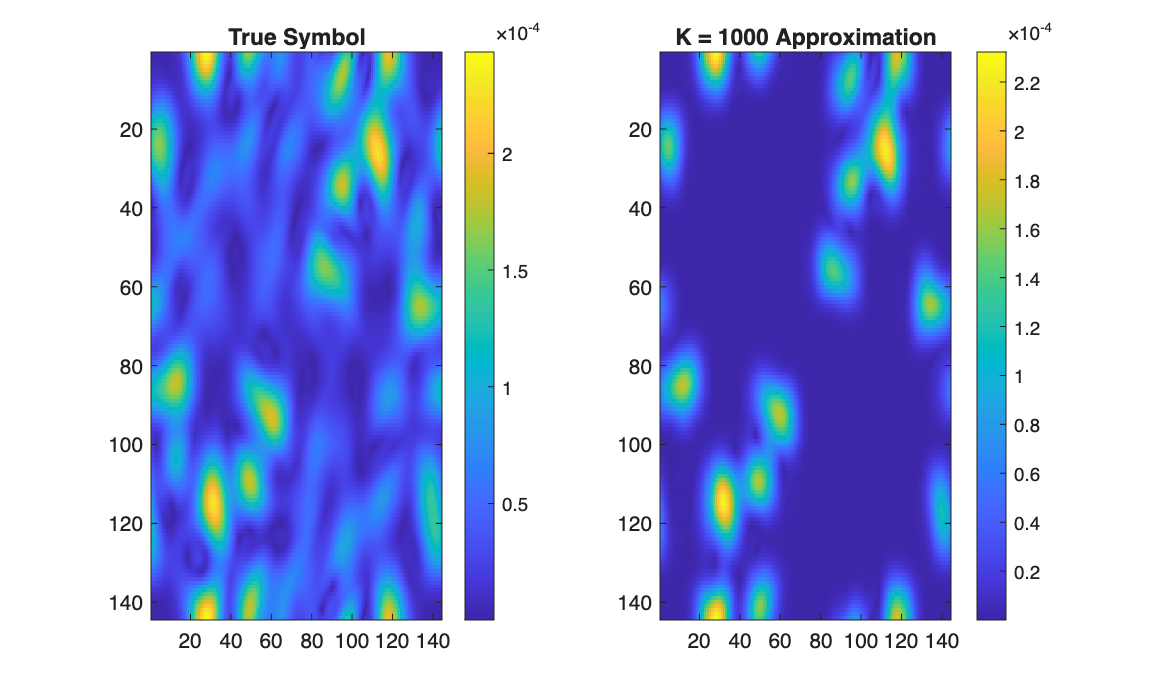}}
    \subfigure[K=4000]{\includegraphics[width=0.3\linewidth]{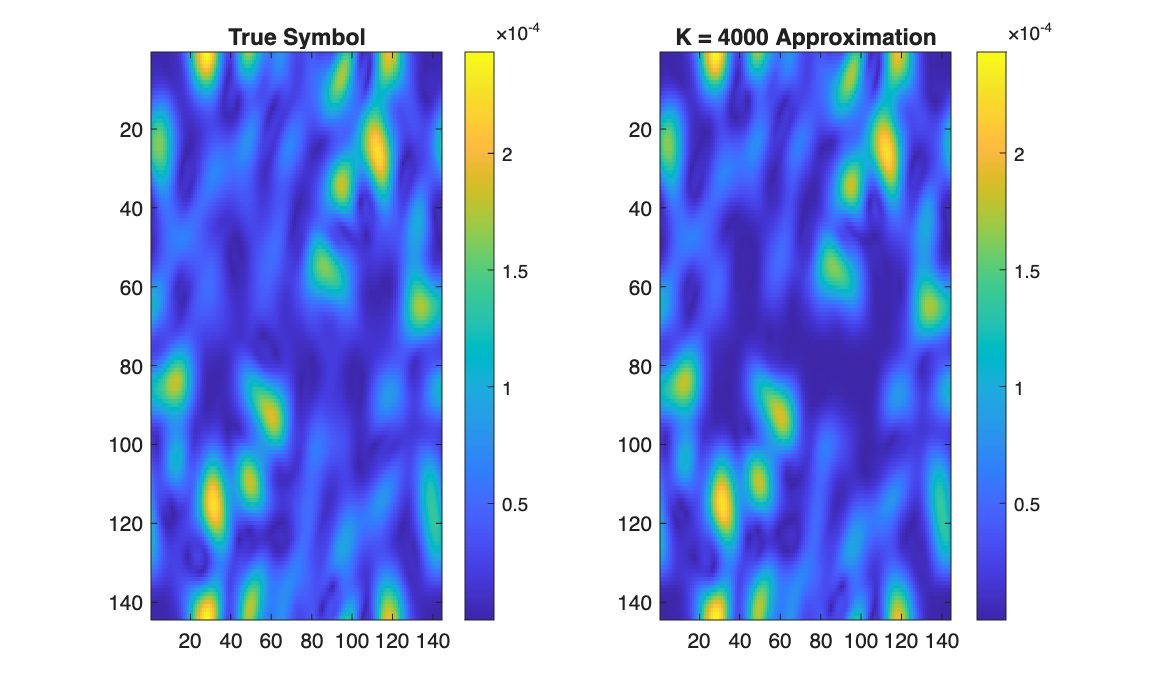}}

    \caption{Approximation results using best \(K\)-term expansions of the operator STFT.}
    \label{fig:K_term_approximations}
\end{figure}

\subsection{Time-frequency localization operators}We look at the sparse approximation of TF-localization operators discussed in Example~\ref{Ex2} next.
For this class of operators, we discuss the concept of denoising via Sparse m-term Approximation. The reasoning is as follows: TF-localization operators such as \( H_m \) can often be well-approximated using only a small number of elementary time-frequency shifted atoms, i.e., it exhibits a sparse representation in a structured dictionary  with atoms of the form \( \Phi^* \pi(\lambda)^* H_m \), where \( \Phi \) is the analysis operator and \( \pi(\lambda) \) denotes a TF shift indexed by lattice parameters.

In other words, a truncated approximation—retaining only the top \( K \) coefficients in Frobenius norm—can capture the essential structure of \( F \) with high fidelity. 

When noise is added to \( H_m \), which may model noisy measurements for example,  its energy spreads across many components in the TF domain, and the resulting operator becomes less sparse. However, since the noise contributions are typically incoherent with respect to the TF structure of the clean operator, they manifest as many small-magnitude coefficients. By thresholding and retaining only the \( K \) largest coefficients, we suppress these small, noise-dominated components. The reconstructed operator, built from the most significant time-frequency atoms, therefore retains the essential TF features of the original \( F \) while discarding high-frequency or incoherent noise—effectively acting as a denoising filter on the operator level. 

\begin{figure}[htbp]
    \centering
    \includegraphics[width=0.6\linewidth]{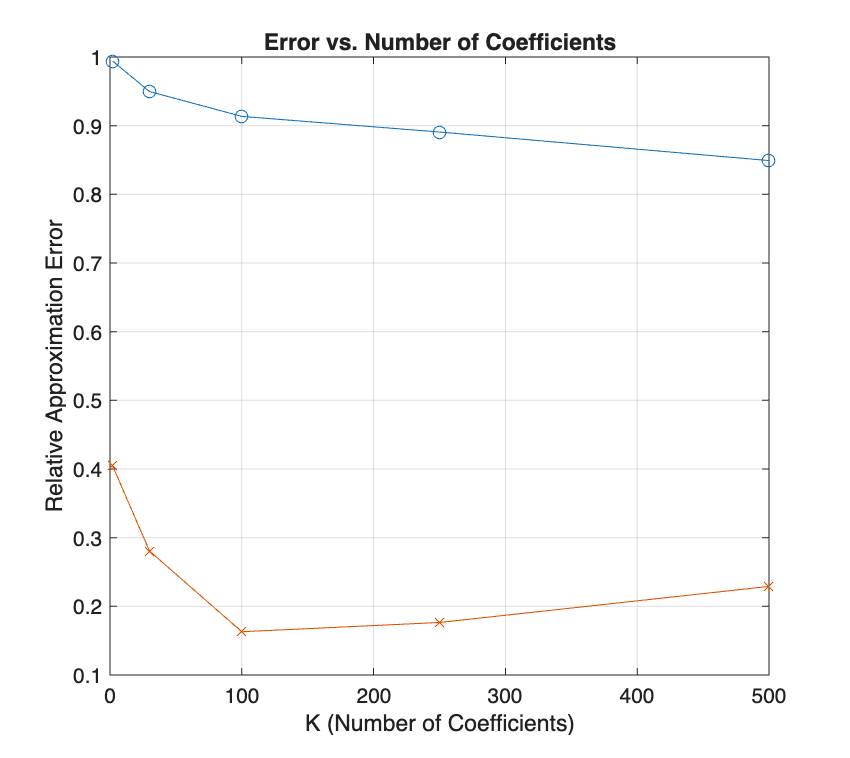}
    \caption{Relative approximation error versus the number of coefficients \( K \) used in the reconstruction of a noisy time-frequency localization operator. 
    The blue curve (with circles) corresponds to the approximation error for the \textit{noisy} version of $H_M$, while the orange curve (with crosses) shows the error in approximating the clean operator using only the top \( K \) coefficients in the time-frequency expansion. 
    As \( K \) increases, the error in the clean component decreases initially but then increases slightly, illustrating a trade-off: including too many small (noise-dominated) coefficients begins to reintroduce noise. The orange curve demonstrates that moderate values of \( K \) lead to effective denoising and accurate recovery.}
    \label{fig:GMerror_vs_K}
\end{figure}

In the presented example, 500 components are used.

\begin{lstlisting}[language=Matlab, caption={MATLAB code to construct a mixed-state operator}, basicstyle=\ttfamily\small, keywordstyle=\color{blue}]
F = zeros(size(F)); % Initialize with zeros 
for k = 1:250
    h = rotmod(g, randi(N), randi(N)); 
    F = F + h' * h;  
    h = rotmod(g, round(randn * 20), round(randn * 20)) * 0.3;
    F = F + h' * h;
end
\end{lstlisting}

{\it Results are visualized in  Figure~\ref{Fig:MixedStateApp}}, where  both the operator symbol (top-left) and its sparsified approximation (top-right), along with their actions on white noise inputs (bottom row) are depicted.

\begin{figure}[h]
    \centering
\includegraphics[width=0.8\textwidth]{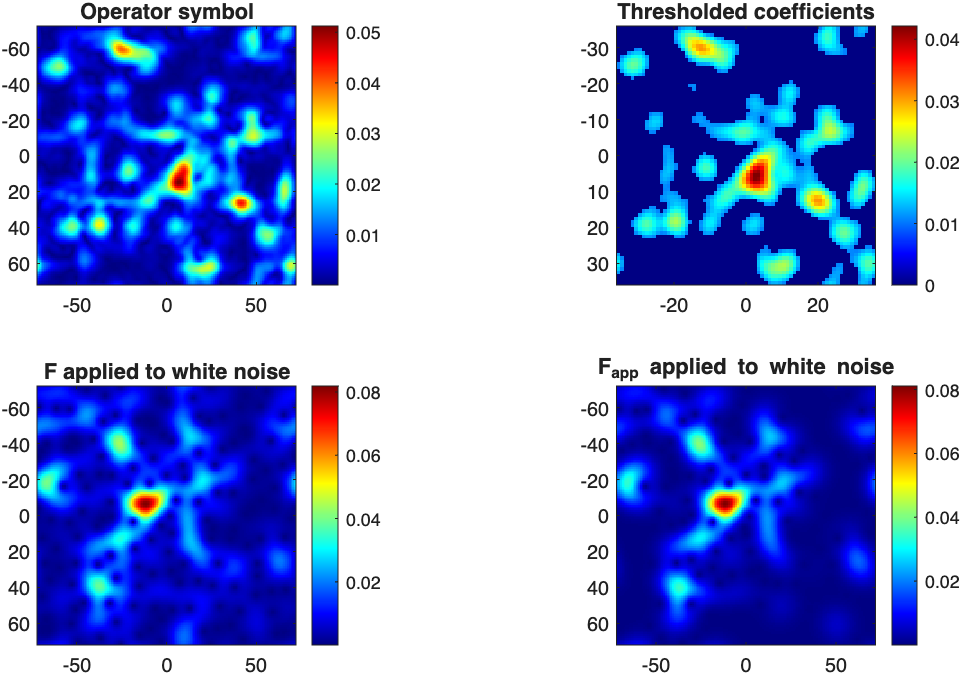}
\caption{Approximation and action of a mixed-state operator.}\label{Fig:MixedStateApp}
\end{figure}

\begin{itemize}
    \item \textbf{Top-left (Operator symbol):} This shows the Wigner-Weyl or phase-space representation of the operator \( F \). It reveals the localized features (e.g., peaks or coherent contributions) that define the structure of the mixed state.
    
    \item \textbf{Top-right (Thresholded coefficients):} This is a sparsified version of the operator coefficients, obtained by applying a threshold to eliminate insignificant components. The goal is to approximate \( F \) with fewer active coefficients.
    
    \item \textbf{Bottom-left  (\( F \) applied to white noise):} This is the ground truth — the result of applying the full operator \( F \) to white noise. Comparison with the bottom-right plot evaluates the fidelity of the approximation.
    \item \textbf{Bottom-right (\( F_{\text{app}} \) applied to white noise):} This subplot shows the effect of the sparsified operator \( F_{\text{app}} \) when applied to white noise. It tests whether the main structural features of \( F \) are preserved under approximation.
   
\end{itemize}
The 
{\it  relative approximation error}
is $0.2339$ with $78\%$ of the coefficients in the reconstruction set to zero.

\subsection{Decay of best $m$-term approximation}
The  experiments in this section are conducted with operators of dimension $N = 144$. The TF analysis is performed on a regular lattice with spacing parameters $a = b = 4$ or
$a = b = 2$. The approximation procedure involves computing the operator short-time Fourier transform (STFT) coefficients with respect to an analysis operator $\Phi$, which is taken from various classes,  retaining  $m$ most significant terms, and then reconstructing an approximation $\hat{F}$ using these terms.

To empirically validate Lemma 4.8 and Remark 4.10, we conduct a systematic evaluation of best-$m$-term operator approximations. In each trial, we fix the number of coefficients $m$ and construct an approximation $\hat{F}_m$ to an operator $F$ using only the $m$ largest analysis coefficients in Frobenius norm. The analysis is performed via localized Gabor-type frames generated over lattices $\Lambda \subset \mathbb{Z}^2$.

The relative approximation error is computed as
\[
\mathrm{AppErr}_K = \frac{\|F - \hat{F}_m\|_{\mathrm{F}}}{\|F\|_{\mathrm{F}}},
\]
and the coefficient usage ratio is given by $K / |\Lambda|$. Additionally, we measure the behavioral similarity using white noise probes:
\[
\mathrm{WNE}_m = \mathbb{E}_{\eta}\left[ \frac{\|F\eta - \hat{F}_m\eta\|}{\|F\eta\|} \right],
\]
where $\eta \sim \mathcal{N}(0, I)$.

\subsubsection{Test Categories}

We consider ten test scenarios:

\begin{itemize}
    \item[\textbf{Test 1}] Infinite-rank operator with infinite-rank analysis.
    \item[\textbf{Test 2}] Infinite-rank operator with rank-2 analysis.
    \item[\textbf{Test 3}] Infinite-rank operator with rank-1 analysis.
    \item[\textbf{Test 4}] Infinite-rank operator with rank-4 analysis.
    \item[\textbf{Test 5}] Infinite-rank operator with rank-6 analysis.
    \item[\textbf{Test 6}] Infinite-rank operator with analysis from its six leading eigenfunctions.
    \item[\textbf{Test 7}] Mixed-state operator with infinite-rank analysis.
    \item[\textbf{Test 8}] Mixed-state operator with rank-2 analysis.
    \item[\textbf{Test 9}] Mixed-state operator with rank-1 analysis.
    \item[\textbf{Test 10}] Random operator with rank-1 analysis (baseline).
\end{itemize}

Each test reports approximation errors, sparsity, and additional norm-based diagnostics.

We implemented a best $m$ term approximation of operators using Gabor-type localized analysis. 
For each of the nine test configurations, we recorded:
the relative approximation error 
    \[
    \text{AppErr}(K) = \frac{\|F - \hat{F}_m\|_{\mathrm{F}}}{\|F\|_{\mathrm{F}}},
    \]
and the relative size of the coefficient set $\frac{K}{|\Lambda|}$. 

\subsubsection{Decay of Approximation Error}
The decay of $\text{AppErr}(K)$ depends on the sparsity of the operator in the chosen representation. 
If the sequence of coefficients belongs to $\ell^p$ with $0<p<2$, the best $m$ term approximation error satisfies
\[
\|F - \hat{F}_m\|_{\mathrm{HS}} \leq C \, K^{\frac{1}{2} - \frac{1}{p}} \, \|c\|_{\ell^p}.
\]
Empirically, the error decays approximately as a power law in $K$, showing rapid decrease for small $K$ and slower convergence as $K$ approaches the total number of coefficients.

\begin{figure}[ht]
    \centering
    \includegraphics[width=1.1\textwidth]{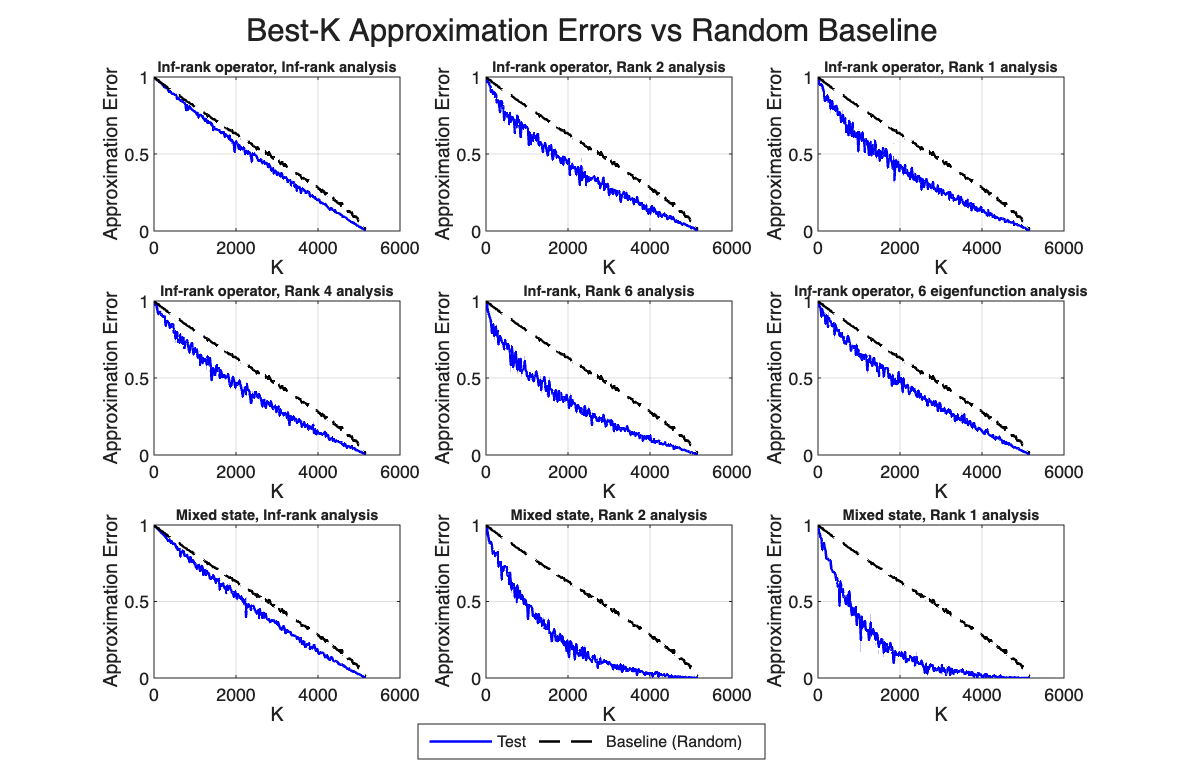}
    \caption{
        Absolute approximation errors $\mathrm{ER}(K)$ for the nine test scenarios 
        compared against the baseline given by a random operator (Test~10).
        Each subplot shows the decay of the approximation error as the number 
        of retained terms $K$ increases.
        The dashed black curve indicates the error decay for the random operator, 
        which is expected to decrease only linearly with $K$.
        Faster decay indicates that the structure of the operator (e.g., 
        low-rank or mixed state) is being exploited effectively by the 
        best-$K$ approximation.
    }
    \label{fig:bestk-absolute}
\end{figure}
The ten test setups explore how different operator structures and analysis models 
affect best $m$ term approximation. Tests~1--4 correspond to infinite-rank operators 
with varying analysis ranks (infinite, 2, 1, and 4, respectively), while 
Tests~5--6 employ either eigenfunction-based analysis or a finite-rank 
approximation (rank~6). Tests~7--9 investigate mixed-state operators under 
infinite-, rank-2, and rank-1 analyses. Finally, Test~10 serves as a baseline, 
using a random operator with rank-1 analysis, against which all other cases are 
benchmarked. The resulting approximation error decay curves are shown in 
Fig.~\ref{fig:bestk-absolute}, where structured operators exhibit markedly faster 
error reduction compared to the random baseline. 

\subsection{ Decay of Approximation Error for operators with smooth symbol}

To specifically illustrate the theoretical findings on sparse approximation in operator coorbit spaces, 
we conducted a numerical study of best-$m$ term approximation for infinite-rank operators 
generated by polynomially weighted Fourier--Wigner multipliers. 
We use two different types of analysis operators to further illustrate the flexibility of the theory as outlined in Theorem~\ref{mainchar2}, more specifically, a rank-one analysis operator based on a Gaussian window and, alternatively, operators with higher rank, but similar asymptotic localization.  

\subsubsection{Experimental Setup}
We fixed the ambient dimension $N = 144$ and considered redundancy parameters $a=b=3$, 
so that the total number of operator coefficients is 
$N^2/(ab) = 2304$. 
As analysis window we first used the projection onto a normalized Gaussian, see Figure~\ref{fig:Bestm}, and second the projection onto the subspace generated by a random mix of $6$ time-frequency shifted Gaussian windows, see Figure~\ref{fig:Bestm6}. 
The operator under investigation was constructed by multiplying a random Gaussian field 
in the Fourier-Wigner  domain with polynomially decreasing weights
\[
w_\alpha(z) = (1+|z|)^{-\alpha}, 
\quad z \in \{-N/2,\dots,N/2-1\},
\]
followed by an inverse Fourier transform. 

For each decay parameter $\alpha = 1,\dots,9$, and additionally for a Gaussian-decaying 
reference case, we applied the best-$m$ term operator approximation. 
The algorithm selects the $m$ largest operator coefficients 
(in absolute value) with respect to the finite-rank analysis mapping.

The resulting approximation errors as functions of $m$ are displayed in 
Figures~\ref{fig:Bestm} and~\ref{fig:Bestm6}. Each subplot corresponds to a fixed value of $\alpha$ 
(polynomial weight order), and the error curves are plotted against the baseline 
case of Gaussian decay. The plots clearly show that stronger polynomial weights 
lead to slower decay of approximation error as $m$ increases, reflecting reduced 
sparsity in the operator representation. In contrast, the Gaussian-weighted baseline 
exhibits the fastest error decay, confirming the sparsifying effect of exponential 
localization in the Fourier--Wigner domain. It is also interesting to see, that the rank-6 analysis operator yields by and large similar results. This observation will be exploited further, in particular, we will study the impact of data-adaptive analysis operators in future work. 

This experiment demonstrates the interplay between decay properties of operator 
symbols and the efficiency of best-$m$ term approximation. In particular, it numerically 
confirms the theoretical prediction that operators with faster decay 
in their Fourier--Wigner transform admit sparser approximations in operator coorbit spaces.

As a final illustration, we show, in Figure~\ref{fig:Approximating_FW_GaussDecay}, the operator kernel of one of the used rank-6 analysis operators $\Phi$ as well as its canonical dual $\widetilde{\Phi}$ are depicted. The lower plots show the approximation quality by means of the corresponding  specific frame, when used for approximating an operator with Gaussian decay in the Fourier-Wigner domain.

\begin{figure}[h]
    \hspace{-1.2cm}
    \includegraphics[width=1.2\textwidth]{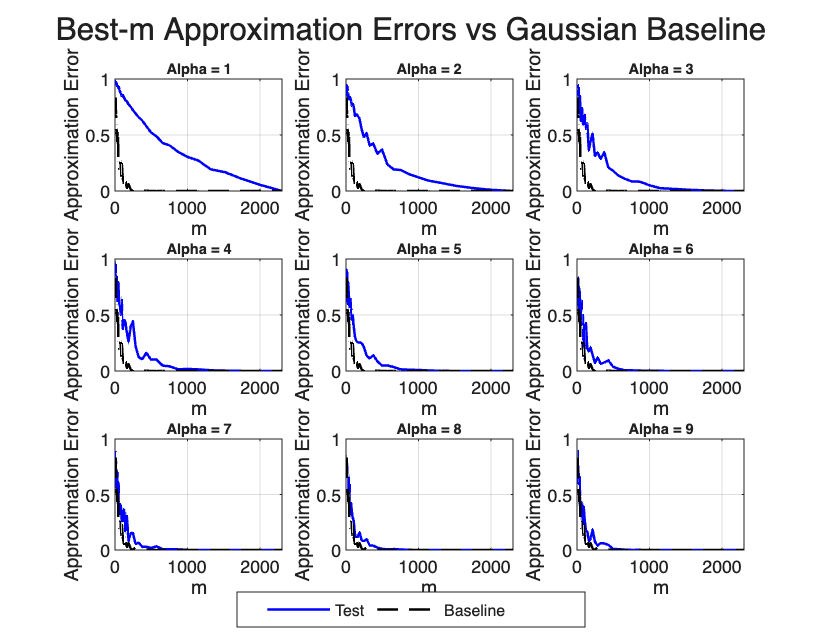}
    \caption{Approximation error curves for best-$m$ term operator approximation.
    Each subplot corresponds to a polynomial decay parameter $\alpha=1,\dots,9$, 
    compared against the Gaussian baseline (dashed). Rank of Analysis Operator $\Phi$ is $1$.}
    \label{fig:Bestm}
\end{figure}

\begin{figure}[h!]
   \hspace{-1.2cm}
    \includegraphics[width=1.2\textwidth]{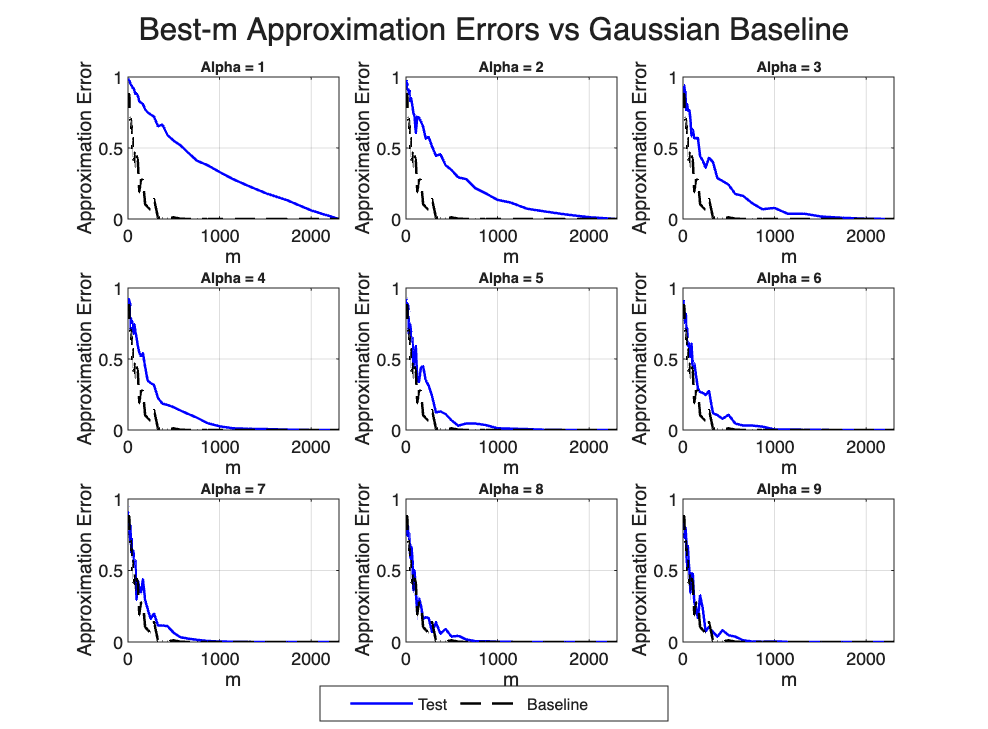}
    \caption{Approximation error curves for best-$m$ term operator approximation.
    Each subplot corresponds to a polynomial decay parameter $\alpha=1,\dots,9$, 
    compared against the Gaussian baseline (dashed). Rank of Analysis Operator $\Phi$ is $6$}
    \label{fig:Bestm6}
\end{figure}

\begin{figure}[t]
    \centering
    \includegraphics[width=1.2\textwidth]{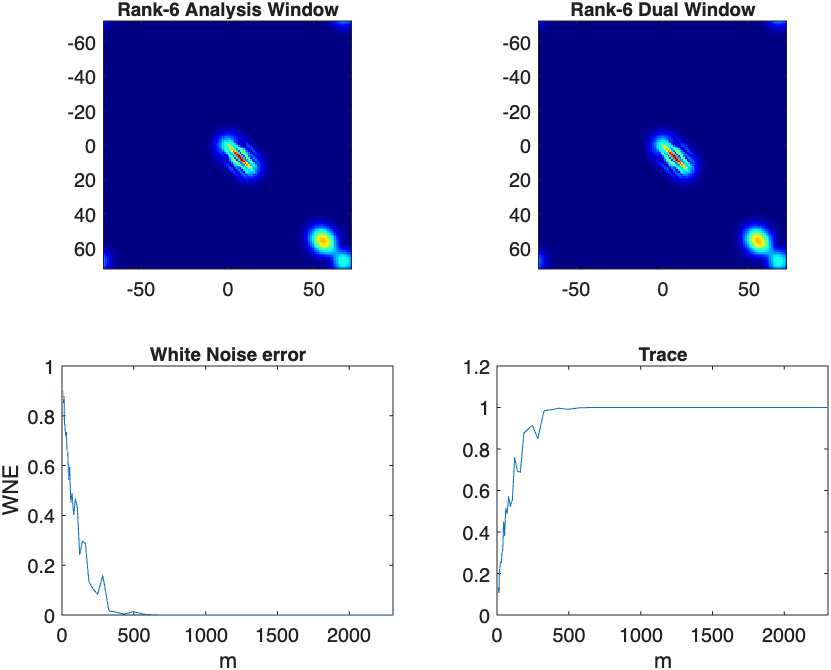}
    \caption{Best-$m$ term approximation of   operator with Gaussian decay in spreading domain. 
     Rank of Analysis Operator $\Phi$, shown in upper left plot,  is $6$, upper right plot shows canonical dual operator. Its rank is $7$. Lower plots show the approximation error for corresponding   frame in approximating an operator with Gaussian decay in the Fourier-Wigner domain. Lower left plot shows white noise probes error and lower right plot shows the trace of the approximation.   }
    \label{fig:Approximating_FW_GaussDecay}
\end{figure}

\section*{Acknowledgments}
    
This research was funded in whole or in part by the Austrian Science Fund (FWF) 10.55776/P34624. For open access purposes, the author has applied a CC BY public copyright license to any author accepted manuscript version arising from this submission. 

L.K. thanks F.L. and NTNU Trondheim for their great hospitality.

\bibliography{biblioall}
\bibliographystyle{abbrv}

\Addresses

\end{document}